\date{}
\title[Quantitative spectral inequalities]{Quantitative spectral inequalities for the anisotropic Shubin operators
and applications to null-controllability}
\author{Paul Alphonse}
\address{(Paul Alphonse) Universit\'e de Lyon, ENSL, UMPA - UMR 5669, F-69364 Lyon}
\email{paul.alphonse@ens-lyon.fr}
\author{Albrecht Seelmann}
\address{(Albrecht Seelmann) Technische Universit\"at Dortmund, Fakult\"at f\"ur Mathematik, D-44221 Dortmund, Germany}
\email{albrecht.seelmann@mathematik.tu-dortmund.de}
\keywords{Spectral inequalities; null-controllability; Agmon estimates; anisotropic Shubin operators; Baouendi-Grushin operator}
\subjclass[2020]{35P05, 93B05, 35P10}
\numberwithin{equation}{section}
\newtheorem{thm}{Theorem}[section]
\newtheorem{prop}[thm]{Proposition}
\newtheorem{lem}[thm]{Lemma}
\newtheorem{cor}[thm]{Corollary}
\theoremstyle{definition}
\newtheorem{dfn}[thm]{Definition}
\newtheorem{ex}[thm]{Example}
\newtheorem{rk}[thm]{Remark}
\DeclareMathOperator{\Ran}{Ran}
\DeclareMathOperator{\diam}{diam}
\DeclareMathOperator{\dist}{dist}
\DeclareMathOperator{\spec}{spec}
\DeclarePairedDelimiter{\abs}{\lvert}{\rvert}
\DeclarePairedDelimiter{\norm}{\lVert}{\rVert}
\DeclarePairedDelimiter{\sprod}{\langle}{\rangle}
\newcommand{\RR}{\mathbb{R}}
\newcommand{\ZZ}{\mathbb{Z}}
\newcommand{\NN}{\mathbb{N}}
\newcommand{\TT}{\mathbb{T}}
\newcommand{\cB}{\mathcal{B}}
\newcommand{\cE}{\mathcal{E}}
\newcommand{\cF}{\mathcal{F}}
\newcommand{\cO}{\mathcal{O}}
\newcommand{\euler}{e}
\newcommand{\scrit}{\zeta}
\newcommand{\bmone}{\mathbbm{1}}
\newcommand{\power}{\eta}
\newcommand{\dd}{\mathrm d}
\newcommand{\obs}{\mathrm{obs}}
\begin{document}

\begin{abstract}
	We prove quantitative spectral inequalities for the (anisotropic) Shubin operators on the whole
	Euclidean space, thus relating for functions from spectral subspaces associated to finite energy intervals their $L^2$-norm on
	the whole space to the $L^2$-norm on a suitable subset. A particular feature of our estimates is that the constant relating
	these $L^2$-norms is very explicit in geometric parameters of the corresponding subset of the whole space, which may become
	sparse at infinity and may even have finite measure. This extends results obtained recently by J.\ Martin
	and, in the particular case of the harmonic oscillator, by A.\ Dicke, I.\ Veseli\'c, and the second author.
	We apply our results towards null-controllability of the associated parabolic equations, as well as to the ones
	associated to the (degenerate) Baouendi-Grushin operators acting on $\RR^d \times \TT^d$.
\end{abstract}

\selectlanguage{english}

\maketitle

\section{Introduction}

Quantitative spectral inequalities are instances of so-called \emph{uncertainty relations} that, in the context of the present
paper, take the form
\[
	\norm{f}_{L^2(\Omega)}^2
	\leq
	d_0 e^{d_1\lambda^{\power}} \norm{f}_{L^2(\omega)}^2
	,\quad
	f \in \cE_\lambda(A)
	,\
	\lambda \geq 0
	,
\]
where $\omega$ is a measurable subset of a domain $\Omega \subset \RR^d$, $\cE_\lambda(A) = \bmone_{(-\infty,\lambda]}(A)$
denotes the spectral subspace for a non-negative selfadjoint operator $A$ in $L^2(\Omega)$ associated with the interval
$(-\infty,\lambda]$, and $d_0,d_1,\power > 0$ are constants. Such inequalities can be viewed as quantitative variants of an
identity theorem (in the sense that $f = 0$ on $\omega$ implies $f = 0$ on $\Omega$) and are often considered under different
names, depending on the context, such as \emph{(quantitative) unique continuation estimates}, see e.g.
\cite{RousseauL-12,LogunovM-20}, or \emph{uncertainty principles}, see e.g. \cite{StollmannS-21}. The notion
\emph{spectral inequality} we adopt is common in the context of control theory, see e.g., \cite{LaurentL-21, RousseauL-12}.
They are also closely related to the so-called \emph{vanishing order}, see, e.g., \cite{DonnellyF-88,LaurentL-21}, and
\emph{annihilating pairs} in Fourier analysis, see e.g. \cite{BJKPS, HavinJ-94}.

In the present work, we prove spectral inequalities from sparse sensor sets $\omega$ with an explicit form of the constants
when $A$ is the (anisotropic) Shubin operator in $L^2(\RR^d)$,
\[
	H_{k,m}
	=
	(-\Delta)^m + \abs{x}^{2k}
	,\quad
	x \in \RR^d
	,
\]
where $k,m \geq 1$ are positive integers.
Our inequalities complement recent results from \cite{Martin} and, in the particular case of the harmonic oscillator,
from \cite{DickeSV-23}. For instance, very general spectral inequalities have been obtained in \cite[Theorem~2.1\,(ii)]{Martin} for
every measurable set $\omega \subset \RR^d$ with merely positive measure. These inequalities take the form
\begin{equation}\label{eq:genesti}
	\norm{f}^2_{L^2(\RR^d)}
	\leq
	Ke^{K\lambda^{\frac1{2k}+\frac1{2m}}\abs{\log\lambda}} \norm{f}^2_{L^2(\omega)}
	,\quad
	f \in \cE_\lambda(H_{k,m})
	,\
	\lambda>0
	,
\end{equation}
where $K>0$ is a positive constant depending on $k$, $m$, the dimension $d$ and the set $\omega$. The dependence of $K$ on the
set $\omega$, however, is not explicit, even if more information on $\omega$ is available. Our inequalities mainly address this
dependence if $\omega$ is sparse in a sense made precise below. The technique of proof used in the present paper follows the
approach by Kovrijkine \cite{Kovrijkinethesis,Kovrijkine} and builds upon recent developments in this field of research
\cite{BJKPS,DickeSV-23,EgidiS-21,Martin,MPS}. We apply our results in the context of exact null-controllability for the abstract
Cauchy problems associated to $H_{k,m}$, as well as to the Baouendi-Grushin operator in
$L^2(\RR^d \times \TT^d)$,
\[
	\Delta_{\gamma}
	=
	\Delta_x + \abs{x}^{2\gamma}\Delta_y
	,\quad
	(x,y) \in \RR^d \times \TT^d
	,
\]
with $\gamma\geq1$ a positive integer. Note that for the latter we use the more traditional parameter $\gamma$, rather
than just $k$ as for the Shubin operators.

\subsubsection*{Outline of the work}
In Section \ref{sec:results}, we present in detail the main results contained in this work.
Section~\ref{sec:spectralEst} is then devoted to the proof of the spectral inequalities for the
anisotropic Shubin operators.
These spectral inequalities are used in Section~\ref{sec:cont} to prove null-controllability results for
the evolution equations associated with both the Shubin operators on $\RR^d$ and the Baouendi-Grushin operators on
$\RR^d \times \TT^d$. Finally, Appendix~\ref{sec:asymptotics} provides a statement on the asymptotics of the smallest eigenvalue
of the anisotropic Shubin operator $H_{k,1}$ as $k \to \infty$, which is used in Example~\ref{ex:example}.

\subsubsection*{Notations} The following notations and conventions will be used throughout this work:
\begin{enumerate}[label={\arabic*.},leftmargin=* ,parsep=2pt,itemsep=0pt,topsep=2pt]
	\item $\NN$ denotes the set of natural numbers starting from zero.
	\item The canonical Euclidean scalar product of $\mathbb R^d$ is denoted by $\cdot$, and $\vert\cdot\vert$ stands for the
				associated canonical Euclidean norm. We will also use the Japanese bracket notation $\langle\cdot\rangle = (1+\vert\cdot\vert^2)^{1/2}$.
	\item The length of any multi-index $\alpha=(\alpha_1,\cdots,\alpha_d)\in\mathbb N^d$ is denoted $\vert\alpha\vert$ and defined by
	$$\vert\alpha\vert = \alpha_1+\cdots+\alpha_d.$$
	\item The Lebesgue measure of a measurable set $\omega\subset\mathbb R^d$ is denoted $\vert\omega\vert$.
	\item $\mathbbm1_{\omega}$ denotes the characteristic function of any subset $\omega\subset\mathbb R^d$.
	\item For all measurable subsets $\omega\subset\mathbb R^d$, the inner product of $L^2(\omega)$ is denoted
				$\langle\cdot,\cdot\rangle_{L^2(\omega)}$, while $\Vert\cdot\Vert_{L^2(\omega)}$ stands for the associated norm.
	\item For a nonnegative selfadjoint operator $A$ on $L^2(\RR^d)$, $\cE_\lambda(A) = \bmone_{(-\infty,\lambda]}(A)$ with
				$\lambda \geq 0$ denotes the spectral subspace for $A$ associated with the interval $(-\infty,\lambda]$.
\end{enumerate}

\subsubsection*{Acknowledgments}
The first author thanks J.~Martin for many enthusiastic discussions during the preparation of this work and for pointing out
some relevant references. He also thanks J.F.~Bony for very fruitful discussions on the theory of large coupling limit and for
pointing out many references on this topic. He finally warmly thanks I.~Veseli\'c and A.~Seelmann for their kind hospitality at
the TU Dortmund, where this work was initiated. The second author has been partially supported by the DFG grant VE 253/10-1
entitled \textit{Quantitative unique continuation properties of elliptic PDEs with variable 2nd order coefficients and
applications in control theory, Anderson localization, and photonics.}

\section{Statement of the main results}\label{sec:results}

This section is devoted to present in detail the main results contained in this work.

\subsection{Spectral inequalities for the Shubin operators}
Given two positive integers $k,m \geq 1$, we consider in $L^2(\RR^d)$ the (anisotropic) Shubin operator
\[
	H_{k,m}
	=
	(-\Delta)^m + \abs{x}^{2k}
	,\quad
	x \in \RR^d
	,
\]
which is a non-negative and selfadjoint operator with purely discrete spectrum when equipped with its maximal domain
\[
	D(H_{k,m})
	=
	\big\{ g \in L^2(\RR^d) \colon H_{k,m}g \in L^2(\RR^d) \big\}
	.
\]
Moreover, for $\lambda\geq0$, let $\mathcal E_{\lambda,k,m} = \cE_\lambda(H_{k,m}) = \Ran\bmone_{(-\infty,\lambda]}(H_{k,m})$
denote the spectral subspace for the operator $H_{k,m}$ associated with the interval $(-\infty,\lambda]$.

For easier comparison, let us first state a result for the harmonic oscillator, corresponding to the case where $k=m=1$, which
covers and extends previous results from \cite{BJKPS,DickeSV-23,EgidiS-21,MPS}, see Remark~\ref{rk:mainharmo} below.

\begin{thm}\label{thm:mainharmo}
	Let $\rho \colon \RR^d \to (0,+\infty)$ and $\sigma \colon \RR^d \to (0,1]$ be functions such that $\rho$ and $1/\sigma$ are
	locally bounded, and let $\omega \subset \RR^d$ be a measurable set satisfying
	\begin{equation}\label{eq:omega}
		\forall x\in\RR^d
		,\quad
		\abs{\omega\cap B(x,\rho(x))} \geq \sigma(x)\abs{B(x,\rho(x))}
		.
	\end{equation}
	Then, there exists a positive constant $K > 0$, depending only on the dimension $d$, such that for all $\lambda\geq0$ and
	$f\in\mathcal E_{\lambda,1,1}$ we have
	\begin{equation}\label{eq:specIneqharmo}
		\norm{f}_{L^2(\RR^d)}^2
		\leq
		\biggl( \frac K{\theta_\lambda} \biggr)^{K( 1+(L_\lambda)^2 + L_\lambda\sqrt{\lambda})}
			\norm{f}_{L^2(\omega)}^2
		,
	\end{equation}
	where
	\[
		\theta_\lambda
		:=
		\inf_{\abs{x} < \sqrt{2\lambda}} \sigma(x)
		\quad\text{ and }\quad
		L_\lambda
		:=
		\sup_{\abs{x} < \sqrt{2\lambda}} \rho(x)
		.
	\]
\end{thm}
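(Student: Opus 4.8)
The plan is to follow the Kovrijkine-type strategy adapted to spectral subspaces of $H_{1,1}$, which proceeds via (i) an Agmon-type decay estimate for $f \in \mathcal E_{\lambda,1,1}$, (ii) an analyticity/Bernstein-type bound on $f$ that quantifies how fast its local sup-norm can be compared to its $L^2$-norm on a ball, and (iii) a local-to-global argument using the covering condition \eqref{eq:omega} together with a one-dimensional propagation-of-smallness (Remez/Turán-type) inequality. First I would establish that functions in $\mathcal E_{\lambda,1,1}$ are concentrated, up to exponentially small error, in the ball $B(0,\sqrt{2\lambda})$ (hence the appearance of $\sqrt{2\lambda}$ in $\theta_\lambda$ and $L_\lambda$): writing $\|f\|^2_{L^2(\RR^d)} = \langle \bmone_{(-\infty,\lambda]}(H_{1,1})f, f\rangle$ and using that the symbol $|\xi|^2 + |x|^2$ of the harmonic oscillator controls $|x|^2$, one gets Gaussian-type Agmon estimates $\|e^{c|x|^2}\mathbbm 1_{|x|\geq R}f\|_{L^2} \le e^{-c'\lambda}\|f\|_{L^2}$ for $R \gtrsim \sqrt\lambda$. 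This reduces matters to estimating $\|f\|_{L^2(\omega \cap B(0,R))}$ from below on a bounded region, at the cost of the harmless exponential weight that will be absorbed into the final constant.

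Next I would exploit that elements of $\mathcal E_{\lambda,1,1}$ are restrictions of entire functions with controlled growth. The standard route is via the Fourier–Bargmann/Hermite expansion: $f = \sum_{|\alpha| \le N} c_\alpha h_\alpha$ is not literally finite, but the spectral cutoff forces $\sum (2|\alpha|+d)\,|c_\alpha|^2 \le \lambda \|f\|^2$ (since eigenvalues of $H_{1,1}$ are $2|\alpha|+d$), so $f$ is very nearly a Hermite polynomial of degree $\lesssim \lambda$ times a Gaussian. Consequently, on any ball $B(x_0, r)$ of bounded radius one obtains a Bernstein-type inequality controlling all derivatives of $f$, equivalently a bound of the form $\|f\|_{L^\infty(B(x_0,2r))} \le (C/r)^{C(1+\sqrt\lambda\, r + \dots)} \|f\|_{L^2(B(x_0,4r))}$, after again splitting off the Gaussian weight. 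This is where the exponents $L_\lambda \sqrt\lambda$ and $(L_\lambda)^2$ in \eqref{eq:specIneqharmo} originate: $r$ is comparable to $\rho(x)$, hence to $L_\lambda$, and the doubling index of $f$ on such a ball grows like $1 + \sqrt\lambda\, r + r^2$.

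With the pointwise control in hand, the local-to-global step is the heart of the argument and I expect it to be the main technical obstacle, mostly in bookkeeping the constants uniformly over the whole space. Cover $B(0,\sqrt{2\lambda})$ by balls $B(x_j,\rho(x_j))$; on each such ball $\omega$ occupies a proportion at least $\sigma(x_j) \ge \theta_\lambda$ of the volume, so a Remez/Turán inequality for the (essentially polynomial) function $f$ on $B(x_j,\rho(x_j))$ gives $\|f\|_{L^\infty(B(x_j,\rho(x_j)))} \le (C/\theta_\lambda)^{C\cdot(\text{doubling index})} \|f\|_{L^2(\omega \cap B(x_j,\rho(x_j)))}$; combining with the Bernstein bound converts this into an $L^2$-to-$L^2$ estimate on each ball with constant of the claimed form $(K/\theta_\lambda)^{K(1+(L_\lambda)^2 + L_\lambda\sqrt\lambda)}$. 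Summing the squared inequalities over the covering (using bounded overlap of the balls, which follows from $\rho$ being locally bounded, to keep the sum from accumulating a dimension-dependent-but-not-$\lambda$-dependent loss that merges into $K$), and adding back the Agmon tail from step (i), yields $\|f\|^2_{L^2(\RR^d)} \le (K/\theta_\lambda)^{K(1+(L_\lambda)^2 + L_\lambda\sqrt\lambda)} \|f\|^2_{L^2(\omega)}$, which is precisely \eqref{eq:specIneqharmo}. The delicate points are ensuring the covering can be chosen with overlap bounded by a purely dimensional constant despite $\rho$ varying, and checking that every intermediate constant depends on $\lambda$ only through the explicit exponent and only through $\sup/\inf$ of $\rho,\sigma$ over $B(0,\sqrt{2\lambda})$ — i.e. no hidden dependence on the global behavior of $\omega$.
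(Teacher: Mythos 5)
Your overall strategy (Bernstein inequality for $\cE_{\lambda,1,1}$, localization of mass to $B(0,\sqrt{2\lambda})$, a Besicovitch-type covering by balls $B(x_j,\rho(x_j))$, then a Kovrijkine/Remez propagation-of-smallness argument) is indeed the one the paper uses. However, two of the steps as you describe them would not go through as written.

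The main gap is in the local-to-global step. You claim a per-ball estimate of the form $\norm{f}_{L^2(B_j)}^2 \leq (K/\theta_\lambda)^{K(1+L_\lambda^2+L_\lambda\sqrt{\lambda})}\norm{f}_{L^2(\omega\cap B_j)}^2$ on each ball, and then propose to sum. This cannot work: the Bernstein inequality compares derivatives of $f$ to the \emph{global} $L^2$ norm, so the local doubling index on an individual ball $B_j$ is not controlled uniformly, and the Remez constant can be arbitrarily bad on a ball where $\norm{f}_{L^2(B_j)}$ is tiny compared with $\norm{f}_{L^2(\RR^d)}$. The heart of the Kovrijkine scheme is precisely the good-ball/bad-ball dichotomy: one singles out those balls $B_j$ on which $\norm{f}_{L^2(B_j)}$ is a controlled fraction of what it ``should'' be, shows via the Bernstein/summability hypothesis that these carry at least half the mass, and applies the Remez argument only there. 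This is not ``bookkeeping''; it is a genuine structural argument. The paper sidesteps the details by invoking the abstract Logvinenko--Sereda-type result of Egidi--Seelmann (Proposition~\ref{prop:ES}), which packages exactly this machinery in terms of $(\kappa,l)$-coverings, and then simply feeds in the explicit Bernstein constant \eqref{eq:BernsteinHarmOsc}.

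A secondary gap: bounded overlap of the cover does \emph{not} follow from $\rho$ being locally bounded. Local boundedness of $\rho$ only bounds the radii; it says nothing about overlap multiplicity. The paper obtains a cover with overlap bounded by a purely dimensional constant $K_{\mathrm{Bes}}^d$ by applying the Besicovitch covering theorem to the family $\{B(x,\rho(x))\colon x\in B(0,\sqrt{2\lambda})\}$, which is exactly what yields a $(K_{\mathrm{Bes}}^d, L_\lambda)$-covering in the sense needed for Proposition~\ref{prop:ES}. Finally, a minor point: your step (i) claims a Gaussian-weighted tail bound $\norm{e^{c\abs{x}^2}\bmone_{\abs{x}\geq R}f}_{L^2}\leq e^{-c'\lambda}\norm{f}_{L^2}$; the actual Agmon estimate for spectral subspaces of $H_{1,1}$ has a \emph{linear} weight $e^{ct\abs{x}}$ and yields decay of order $e^{-c\sqrt{\lambda}}$ at scale $R\sim\sqrt{\lambda}$, not $e^{-c\lambda}$. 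In any case the paper does not need any exponential tail bound here: the simple $L^2$ estimate $\norm{\abs{x}f}_{L^2}^2 \leq \lambda\norm{f}_{L^2}^2$ immediately gives $\norm{f}_{L^2(\RR^d)}^2 \leq 2\norm{f}_{L^2(B(0,\sqrt{2\lambda}))}^2$ (Corollary~\ref{cor:locBernstein}), which is all that is used.
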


\begin{rk}\label{rk:mainharmo}
	Suppose that the functions $\sigma$ and $\rho$ satisfy the bounds
	\begin{equation}\label{eq:gammarhoBounds}
		\forall x \in \RR^d
		,\quad
		\sigma(x) \geq \theta^{\sprod{x}^a}
		\quad\text{ and }\quad
		\rho(x) \leq L\sprod{x}^\delta
	\end{equation}
	with some fixed $\theta \in (0,1]$, $a \geq 0$, $L > 0$, and $\delta \geq 0$. In this case, we have
	\[
		\theta_\lambda
		\geq
		\theta^{(1+2\lambda)^{a/2}}
		\quad\text{ and }\quad
		L_\lambda
		\leq
		L(1+2\lambda)^{\delta/2}
		.
	\]
	It is then straightforward to verify that \eqref{eq:specIneqharmo} takes the form
	\begin{equation}\label{eq:specIneqharmoGeom}
		\norm{f}_{L^2(\RR^d)}^2
		\leq
		\biggl( \frac K{\theta} \biggr)^{K^{1+a+\delta}(1+L^2\lambda^{\delta+a/2} + L\lambda^{(1 + a+\delta)/2})}
			\norm{f}_{L^2(\omega)}^2
		,\quad
		f \in \cE_{\lambda,1,1}
		,
	\end{equation}
	with a possibly different constant $K \geq 1$. This covers \cite[Theorem~2.7]{DickeSV-23}, while the particular case of $a = 0$ has
	also previously been considered in \cite[Theorem~2.1]{MPS} under the additional assumption that $\rho$ is $1/2$-Lipschitz
	continuous.

	The case where the functions $\sigma$ and $\rho$ are constant, and thus the parameters $a$ and $\delta$ above can be chosen
	equal to zero, that is,
	\begin{equation}\label{eq:thick}
		\forall x\in\mathbb R^d
		,\quad
		\abs{ \omega \cap B(x,L) } \geq \theta \abs{ B(x,L) }
		,
	\end{equation}
	corresponds to so-called \emph{$(\theta,L)$-thick} sets. Such sets have been getting considerable attention in the past and have
	been previously discussed in this context in \cite[Theorem~2.1\,(iii)]{BJKPS} and \cite[Corollary~1.9]{EgidiS-21}. In fact,
	\cite[Corollary~1.9]{EgidiS-21} also makes in this case the dependence on the dimension in \eqref{eq:specIneqharmoGeom}
	explicit. This could have been done in \eqref{eq:specIneqharmo} with our technique as well, but we refrained from doing so for
	the sake of simplicity.
\end{rk}

The spectral inequality in \eqref{eq:specIneqharmo} is very explicit in terms of $\sigma$ and $\rho$. The fact that only the
uniform bounds of $\sigma$ and $\rho$ on the ball $B(0,\sqrt{2\lambda})$ enter the estimate \eqref{eq:specIneqharmo} is due to
the strong decay that the potential enforces on the eigenfunctions of the harmonic oscillator (and finite linear combinations
thereof). This is an instance of a much more general phenomenon that also takes place in case of general (anisotropic) Shubin
operators and eventually leads to a variant of Theorem~\ref{thm:mainharmo} for these operators that, in particular, gives a
positive answer to \cite[Conjecture~1.6]{DickeSV2}. Our corresponding main result considers exactly the same geometry for
$\omega \subset \RR^d$ as in Theorem \ref{thm:mainharmo} and reads as follows.

\begin{thm}\label{thm:main}
	There exists a constant $K > 0$, depending only on $k$, $m$, and the dimension $d$, such that for all measurable sets
	$\omega \subset \RR^d$ satisfying the geometric condition \eqref{eq:omega}, and all $\lambda \geq 0$ and
	$f \in \cE_{\lambda,k,m}$ we have
	\begin{equation}\label{eq:specIneq}
		\norm{f}_{L^2(\RR^d)}^2
		\leq
		\biggl( \frac K{\theta_{\lambda,k}} \biggr)^{K(1+(L_{\lambda,k})^{1+\frac km} + L_{\lambda,k}
			\lambda^{\frac1{2m}} + \log(1+\lambda))}\norm{f}_{L^2(\omega)}^2
		,
	\end{equation}
	where
	\begin{equation}\label{eq:geom}
		\theta_{\lambda,k}
		:=
		\inf_{\abs{x} < (2\lambda)^{1/2k}} \sigma(x)
		\quad\text{ and }\quad
		L_{\lambda,k}
		:=
		\sup_{\abs{x} < (2\lambda)^{1/2k}} \rho(x)
		.
	\end{equation}
\end{thm}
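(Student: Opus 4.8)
The plan is to run Kovrijkine's scheme \cite{Kovrijkine} in the variable-density form developed in \cite{DickeSV,EgidiS-21,Martin}. Throughout, write $R_\lambda := (2\lambda)^{1/2k}$, abbreviate $\theta := \theta_{\lambda,k}$ and $L := L_{\lambda,k}$, and let $C \ge 1$ denote a constant depending only on $k$, $m$, $d$, allowed to grow from line to line. I would first reduce to the classically allowed ball: for $f \in \cE_{\lambda,k,m}$ the spectral theorem gives $\langle\abs{x}^{2k}f,f\rangle_{L^2(\RR^d)} \le \langle H_{k,m}f,f\rangle_{L^2(\RR^d)} \le \lambda\norm{f}_{L^2(\RR^d)}^2$, hence $2\lambda\int_{\abs{x}\ge R_\lambda}\abs{f}^2\,\dd x \le \int_{\abs{x}\ge R_\lambda}\abs{x}^{2k}\abs{f}^2\,\dd x \le \lambda\norm{f}_{L^2(\RR^d)}^2$ and therefore $\norm{f}_{L^2(B(0,R_\lambda))}^2 \ge \tfrac12\norm{f}_{L^2(\RR^d)}^2$. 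So it is enough to bound $\norm{f}_{L^2(B(0,R_\lambda))}^2$ from above by the right-hand side of \eqref{eq:specIneq}. (Sharper Agmon-type bounds would give genuine exponential decay of $f$ past $B(0,R_\lambda)$, but this soft inequality suffices.)

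The heart of the argument is a quantitative-analyticity step: controlling, uniformly in $\lambda$, how well $f \in \cE_{\lambda,k,m}$ is approximated on small balls by polynomials of controlled degree. Starting from $\norm{H_{k,m}^n f}_{L^2(\RR^d)} \le \lambda^n\norm{f}_{L^2(\RR^d)}$ for all $n\in\NN$ and exploiting the structure $H_{k,m}=(-\Delta)^m+\abs{x}^{2k}$ together with spectral interpolation, I would establish Gelfand--Shilov/Gevrey-type bounds of the form
\[
\norm{\partial^\alpha f}_{L^2(\RR^d)} \le C^{1+\abs{\alpha}}\bigl((\abs{\alpha}!)^{k/(k+m)} + \lambda^{\abs{\alpha}/(2m)}\bigr)\norm{f}_{L^2(\RR^d)} , \qquad \alpha\in\NN^d ,
\]
together with the analogous weighted estimates in which one also gains powers of $\abs{x}$; this is exactly the regularity already exploited in \cite{Martin} and, for $k=m=1$, in \cite{DickeSV}. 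Summing the Taylor expansion of $f$ on a ball of radius $r\le L$ and optimising the truncation order, the first term contributes an effective degree $\lesssim r^{1+k/m}$ (the Gevrey exponent $k/(k+m)$ having reciprocal defect $1+k/m$) and the second an effective degree $\lesssim r\lambda^{1/(2m)}$, while passing from $L^2$ to pointwise control of the derivatives (Sobolev embedding, at the cost of a power of $1+\lambda$) produces an additive $\log(1+\lambda)$. Altogether this should show that $f$ agrees on any such ball, up to relative $L^2$-error $\tfrac12$, with a polynomial of degree
\[
N \le C\bigl(1 + L^{1+k/m} + L\lambda^{1/(2m)} + \log(1+\lambda)\bigr) .
\]

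With this in hand the rest is the standard good/bad ball dichotomy. I would cover $B(0,R_\lambda)$ by balls $B_j = B(x_j,\rho(x_j))$ chosen (Besicovitch-type, using local boundedness of $\rho$) so that these balls and their doublings $2B_j := B(x_j,2\rho(x_j))$ have dimension-bounded overlap, and such that \eqref{eq:omega} applies on each, giving $\rho(x_j)\le L$ and $\abs{\omega\cap B_j}\ge\sigma(x_j)\abs{B_j}\ge\theta\abs{B_j}$. Call $B_j$ \emph{good} if $\norm{f}_{L^2(2B_j)}^2 \le D\norm{f}_{L^2(B_j)}^2$ for a suitably large $D=D(k,m,d)$; since $\sum_j\norm{f}_{L^2(2B_j)}^2 \le C\norm{f}_{L^2(\RR^d)}^2$, a counting argument yields $\sum_{j\ \mathrm{good}}\norm{f}_{L^2(B_j)}^2 \ge c\norm{f}_{L^2(B(0,R_\lambda))}^2$. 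On a good ball, the previous step makes $f$ comparable on $B_j$ to a polynomial of degree $\le N + C\log D \lesssim N$, and a Remez/Tur\'an-type inequality for such functions on the subset $\omega\cap B_j$ of relative measure $\ge\theta$ gives $\norm{f}_{L^2(B_j)}^2 \le (C/\theta)^{CN}\norm{f}_{L^2(\omega\cap B_j)}^2$. Summing over the good balls and using finite overlap once more,
\[
\tfrac12\norm{f}_{L^2(\RR^d)}^2 \le \norm{f}_{L^2(B(0,R_\lambda))}^2 \le \Bigl(\frac C\theta\Bigr)^{CN}\norm{f}_{L^2(\omega)}^2 ,
\]
which is \eqref{eq:specIneq} once the bound for $N$ is inserted and the constants renamed.

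The main obstacle is entirely in the analyticity step: proving the Gevrey-type derivative (and weighted) bounds for finite linear combinations of eigenfunctions of $H_{k,m}$ with constants that are explicit and essentially sharp in $\lambda$, and then extracting from them the local polynomial degree with the precise exponents $1+k/m$ on $L$ and $1/(2m)$ on $\lambda$. This is where the genuine interaction between the potential $\abs{x}^{2k}$ and the higher-order kinetic term $(-\Delta)^m$ must be handled carefully, and where the general case departs from the harmonic oscillator $k=m=1$ --- for which the Gevrey index is $1/2$, the estimates are cleaner, and the logarithmic correction is absent (cf.\ Theorem~\ref{thm:mainharmo}). The covering and Remez arguments, by contrast, are routine adaptations of now-standard techniques from \cite{DickeSV,EgidiS-21,Martin}.
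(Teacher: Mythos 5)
Your overall strategy is the right one and coincides with the paper's: reduce to the classically allowed ball $B(0,(2\lambda)^{1/2k})$ via the crude variance bound $\langle\abs{x}^{2k}f,f\rangle\le\lambda\norm{f}^2$, establish Bernstein/Gevrey-type derivative estimates for functions in $\cE_{\lambda,k,m}$, cover the ball with small balls adapted to $\rho$ (Besicovitch), and then run Kovrijkine's good/bad-ball plus Remez argument. The paper packages the last two steps into the black-box uncertainty relation of \cite[Proposition~3.1]{EgidiS-21} (Proposition~\ref{prop:ES}), feeding in the Bernstein constants through the quantity $h$, whereas you propose to re-run Kovrijkine by hand; either is fine. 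The genuine gap is in the analyticity step, and it is not merely a matter of "details to be checked".

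Concretely, the Gevrey bound you propose, $\norm{\partial^\alpha f}_{L^2}\le C^{1+\abs{\alpha}}\bigl((\abs{\alpha}!)^{k/(k+m)}+\lambda^{\abs{\alpha}/(2m)}\bigr)\norm{f}_{L^2}$, has no $\lambda$-polynomial prefactor, and a clean bound of this form would actually yield the spectral inequality \emph{without} the $\log(1+\lambda)$ term in the exponent -- which the paper explicitly leaves as a conjecture. What the paper can actually prove (Proposition~\ref{prop:globalBernstein}) is a Bernstein constant of the form $C_B(n,\lambda,\delta)=2C^{2(1+n)}\delta^{2n}(n!)^2(1+\lambda^{d\scrit})\,\euler^{(c+d)\delta^{-1/\nu}}\,\euler^{c\delta^{-1}\lambda^{1/(2m)}}$, carrying the extra factor $(1+\lambda^{d\scrit})$. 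That factor is unavoidable with the available tools: it arises when passing from the quantitative Agmon estimates for \emph{single} eigenfunctions of $H_{k,m}$ (from \cite{Alp20b}) to Agmon estimates for arbitrary $f\in\cE_{\lambda,k,m}$, since one must pay the eigenvalue counting function $N(\lambda)\lesssim\lambda^{d\scrit}$ (Proposition~\ref{prop:Agmon} and Remark~\ref{rk:agmon}). It is precisely this $\lambda^{d\scrit}$ that, after taking $\log h$, produces the additive $\log(1+\lambda)$ in the final exponent. Your attribution of the logarithm to a Sobolev-embedding/pointwise-control step is therefore off the mark: the $L^2\to L^\infty$ passage in Kovrijkine costs only powers of the local scale, independent of $\lambda$. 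So either you prove the sharp Gevrey/Agmon estimate (which would be a new result removing the $\log(1+\lambda)$ term and settling the paper's conjecture), or you must track the $\lambda^{d\scrit}$ prefactor through the Agmon-for-subspaces step, in which case the $\log(1+\lambda)$ term appears for the reason just described, not for the reason you give.

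One further small remark: your Gevrey bound also needs to be in the form required by the covering lemma, namely a \emph{summed} inequality $\sum_{\abs{\alpha}=n}\tfrac1{\alpha!}\norm{\partial^\alpha f}^2_{L^2}\le\tfrac{C_B(n)}{n!}\norm{f}^2_{L^2}$ with a free parameter (the $\delta$ above) that forces the series $\sum_n\sqrt{C_B(n)}(10dl)^n/n!$ to converge for the relevant scale $l=L_{\lambda,k}$. Without that extra parameter the geometric series in the local scale diverges, so the precise form of the Bernstein constant is not just cosmetic.
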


\begin{rk}\label{rk:badterm}
	Similarly as for the harmonic oscillator, the potential $\abs{x}^{2k}$ enforces a strong decay of (finite linear combinations
	of) eigenfunctions of the	operator $H_{k,m}$, so that such functions are localized around the origin. More precisely,
	Corollary~\ref{cor:locBernstein} below states that for all $\lambda \geq 0$ and $f \in \cE_{\lambda,k,m}$,
	\[
		\norm{f}_{L^2(\RR^d)}^2
		\leq
		2\norm{f}_{L^2(B(0,(2\lambda)^{1/2k}))}^2
		.
	\]
	It is therefore sufficient to prove for functions in $\cE_{\lambda,k,m}$ estimates on the ball $B(0,(2\lambda)^{1/2k})$ in order
	to obtain similar estimates on the whole space $\RR^d$. This also explains why in \eqref{eq:specIneq} only the bounds of
	$\sigma$ and $\rho$ on the ball $B(0,(2\lambda)^{1/2k})$ enter.

	While the just mentioned localization behaviour is completely consistent with the case of the harmonic oscillator in
	Theorem~\ref{thm:mainharmo}, it is worth to note that the term $\log(1+\lambda)$ on the right-hand side of \eqref{eq:specIneq}
	does not appear in \eqref{eq:specIneqharmo}. This term turns out to be quite unfavourable (see Remark~\ref{rk:mainComparison}
	below), and we conjecture that it can indeed be just skipped. The reason why it comes into play within our framework is related
	to obtaining Agmon estimates for spectral subspaces as explained in Remark~\ref{rk:agmon} in Section~\ref{ssec:Agmon} below.
	Nevertheless, since $\log(1+\lambda)$ is dominated by every power of $\lambda$, it should be emphasized that our bound from
	Theorem~\ref{thm:main} still gives a proper quantitative spectral inequality that is strong enough to be applied in the context
	of null-controllability and thus obtain results in Corollaries~\ref{cor:fractionalShubin} and~\ref{cor:contlog} and
	Theorems~\ref{thm:bagrushincont} and~\ref{thm:critdiss}\,(ii) below that were otherwise not accessible before.
\end{rk}

\begin{rk}\label{rk:mainComparison}
	Suppose again that the functions $\sigma$ and $\rho$ satisfy \eqref{eq:gammarhoBounds}, so that
	\[
		\theta_{\lambda,k}
		\geq
		\theta^{(1+(2\lambda)^{1/k})^{a/2}}
		\quad\text{ and }\quad
		L_{\lambda,k}
		\leq
		L(1+(2\lambda)^{1/k})^{\delta/2}
		.
	\]
	In this case, it is easy to check that the spectral inequality \eqref{eq:specIneq} can be written as
	\begin{equation}\label{eq:specweakly}
		\norm{f}_{L^2(\RR^d)}^2
		\leq
		\biggl(\frac K{\theta}\biggr)^{K^{1+a+\delta} (1+\lambda^{\frac{a}{2k}}) (1+L^{1+\frac km}\lambda^{\delta(\frac1{2k} +
			\frac1{2m})} + L\lambda^{\frac{\delta}{2k} + \frac1{2m}} + \log(1+\lambda))}\norm{f}_{L^2(\omega)}^2
	\end{equation}
	with a possibly different constant $K \geq 1$. This extends \cite[Theorem~2.1\,(i)]{Martin}, where only the case $a = 0$ and
	$\delta \in [0,1]$ is considered. At the same time, our bound in \eqref{eq:specweakly} is much more explicit in the model
	parameters, which is very useful in the context of control theory, see Section~\ref{ssec:exactContr} below. It should be
	mentioned, however, that in \eqref{eq:specweakly} with $a = 0$ the formal homogenization limit as $L \to 0$ results in a
	right-hand side where the constant still depends on $\lambda$. This is due to the $\log(1+\lambda)$-term in
	\eqref{eq:specweakly} (resp.\ \eqref{eq:specIneq}) but is highly unintuitive and not consistent with the known behaviour for the
	free Laplacian and the harmonic oscillator. This is one reason why this term is considered unfavourable and should be removed in
	future research if possible, cf.\ Remark~\ref{rk:agmon} below.

	It is also worth to note that for $a = 0$ (for simplicity) and $\delta \in [0,1]$ the estimate \eqref{eq:specweakly} can for
	$\lambda \geq 1$ be written as
	\[
	\norm{f}^2_{L^2(\RR^d)}
	\leq
	Ke^{K\lambda^{\frac{\delta}{2k}+\frac1{2m}}} \norm{f}^2_{L^2(\omega)}
	\]
	with yet another constant $K > 0$, now also depending on $L$, $\theta$, and $\delta$. This is stronger than the general estimate
	\eqref{eq:genesti}. By contrast, if $a = 0$ and $\delta > 1$, estimate \eqref{eq:specweakly} writes for $\lambda \geq 1$ as
	\[
		\norm{f}^2_{L^2(\RR^d)}
		\leq
		Ke^{K\lambda^{\delta(\frac1{2k}+\frac1{2m})}} \norm{f}^2_{L^2(\omega)}
	\]
	and is therefore worse than the general estimate \eqref{eq:genesti}, although the latter only uses that $\omega$ has positive
	measure. It is not yet clear how to reconcile this different behavior in the two regimes $\delta \leq 1$ and $\delta > 1$.
\end{rk}

In the end of this subsection, let us present examples of measurable sets satisfying the geometric condition \eqref{eq:omega}.

\begin{ex}
	Suppose that the local scale $\rho \equiv L > 0$ is constant and that	$\sigma = w/(\sqrt{d}+1)^d$ with a radially symmetric
	function $w \colon \RR^d \to (0,1]$ that is non-increasing with respect to the modulus and for which $1/w$ is locally bounded.
	Inspired by \cite[Example~2.3]{DickeSV-23} and \cite[Example~4.17]{Dickethesis}, with $l = L/(\sqrt{d}+1)$ and
	$r_j = l w(j)^{1/d}$ consider the set
	\[
		\omega
		=
		\bigcup_{j \in l\ZZ^d} B(j,r_j)
		.
	\]
	This set $\omega$ satisfies the geometric condition \eqref{eq:omega}. Indeed, given $x \in \RR^d$, there is $j \in l\ZZ^d$ with
	$\abs{j} \leq \abs{x}$ and $\abs{x - j} < l\sqrt{d}$, so that $\abs{x - j} + r_j < l(\sqrt{d}+1) = L$. Hence, the ball $B(x,L)$
	contains the ball $B(j,r_j)$, so that
	\[
		\frac{\abs{\omega \cap B(x,L)}}{\abs{B(x,L)}}
		\geq
		\frac{\abs{B(j,r_j)}}{\abs{B(x,L)}}
		=
		\biggl( \frac{r_j}{L} \biggr)^d
		=
		\sigma(j)
		\geq
		\sigma(x)
		.
	\]
	It is worth to note that under the condition $\sum_{j \in l\ZZ^d} w(j) < \infty$, the above set $\omega$ has finite measure.
\end{ex}

\begin{ex}
	Suppose that $d \geq 2$. Inspired by \cite[p.\ 32]{Martinthesis}, let us consider a non-decreasing continuous function
	$R \colon [0,+\infty) \to (0,+\infty)$, a non-increasing continuous function $r \colon [0,+\infty) \to (0,1)$, and the
	associated set
	\[
		\omega_{r,R}
		=
		\bigl\{ (x,y) \in \RR^{d-1} \times \RR \colon \abs{y} > R(\abs{x})(1-r(\abs{x})) \bigr\}
		.
	\]
	It is then easy to see that the intersection $\omega_{r,R} \cap B((x,0),R(|x|))$ is always non-empty (and open). Hence, the set
	$\omega_{r,R}$ satisfies the geometric condition \eqref{eq:omega} with the functions $\rho$ and $\sigma$ given by
	$\rho(x,y) = R(\abs{x})$ and
	\[
		\sigma(x,y)
		:=
		\frac{\abs{\omega_{r,R}\cap B((x,y),\rho(x,y))}}{\abs{B((x,y),\rho(x,y))}}
		\geq
		\frac{\abs{\omega_{r,R}\cap B((x,0),R(\abs{x}))}}{\abs{B((x,0),R(\abs{x}))}}
		>
		0
		,
	\]
	respectively.
\end{ex}

\subsection{Exact null-controllability}\label{ssec:exactContr}
As application of the spectral inequalities from Theorems~\ref{thm:mainharmo} and \ref{thm:main}, we study the exact
null-controllability for two classes of diffusive equations, being elliptic and hypoelliptic, respectively.

\begin{dfn}[Exact null-controllability]
	Let $\Omega \subset \RR^d$ be a domain, and let $P$ be a non-negative selfadjoint operator in $L^2(\Omega)$.
	Given a measurable set $\omega \subset \Omega$, the evolution
	equation
	\begin{equation}\label{eq:evoEq}
		\begin{cases}
			\partial_tf(t,x) + Pf(t,x) = h(t,x)\bmone_{\omega}(x), & t>0,\ x \in \Omega,\\
			f(0,\cdot) = f_0\in L^2(\Omega),
		\end{cases}
	\end{equation}
	is said to be \emph{exactly null-controllable from the control support $\omega$ in time $T > 0$} if for every initial datum
	$f_0\in L^2(\Omega)$ there exists a control function $h\in L^2((0,T)\times\Omega)$ such that the mild
	solution to \eqref{eq:evoEq} satisfies $f(T,\cdot) = 0$.
\end{dfn}

\subsubsection{The fractional anisotropic Shubin evolution equations}\label{sssec:fractionalShubin}
Let us first consider the evolution equations of the form \eqref{eq:evoEq} associated to the elliptic operators $P = H^s_{k,m}$
with $s > 0$, that is, 
\begin{equation}\label{eq:fractionalShubin}\tag{$E_{s,k,m}$}
	\begin{cases}
		\partial_tf(t,x) + H^s_{k,m}f(t,x) = h(t,x)\bmone_{\omega}(x), & t>0,\ x\in\RR^d,\\
		f(0,\cdot) = f_0\in L^2(\RR^d).
	\end{cases}
\end{equation}
Here, the fractional powers of the operator $H_{k,m}$ are understood via standard functional calculus.

The spectral inequalities in Theorems~\ref{thm:mainharmo} and~\ref{thm:main} allow us to derive many exact null-controllabil\-ity
results for the equation \eqref{eq:fractionalShubin}, and we choose to present only three statements. We first give two general
results closely related to Remark~\ref{rk:mainComparison}.

\begin{cor}\label{cor:fractionalShubin}
	Let $\omega \subset \RR^d$ be a measurable set as in \eqref{eq:omega}, and suppose that the two functions
	$\sigma \colon \RR^d \to (0,1]$ and $\rho \colon \RR^d \to (0,+\infty)$ satisfy
	\[
		\sigma(x)
		\geq
		\theta^{\sprod{x}^a}
		\quad\text{ and }\quad
		\rho(x)
		\leq
		L\sprod{x}^\delta
		,\quad
		x \in \RR^d
		,
	\]
	with some fixed $L > 0$, $\delta \in [0,1]$, $\theta \in (0,1]$, and $a \geq 0$. Then, for all $s > 0$ satisfying
	\[
		\frac{\delta+a}{2k} + \frac1{2m}
		<
		s
		,
	\]
	the equation \eqref{eq:fractionalShubin} is exactly null-controllable from $\omega$ in every positive time $T>0$.
\end{cor}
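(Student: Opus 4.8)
The plan is to derive the null-controllability of \eqref{eq:fractionalShubin} from the spectral inequality in Theorem~\ref{thm:main} via the classical Lebeau--Robbiano method in its abstract form (as developed by Tenenbaum--Tucsnak, Miller, and Beauchard--Pravda-Starov, among others). Recall that this method requires two ingredients: first, a spectral inequality of the form
\[
	\norm{g}_{L^2(\RR^d)}^2
	\leq
	C_0 e^{C_1 \mu^{\kappa}} \norm{g}_{L^2(\omega)}^2
	,\quad
	g \in \cE_{\mu}(H^s_{k,m})
	,
\]
for some exponent $\kappa < 1$; and second, a dissipation estimate for the semigroup $e^{-tH^s_{k,m}}$ on the orthogonal complement of $\cE_\mu(H^s_{k,m})$, which here is immediate from the functional calculus: $\norm{e^{-tH^s_{k,m}}(1 - \bmone_{(-\infty,\mu]}(H^s_{k,m}))}_{L^2\to L^2} \leq e^{-t\mu}$. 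Since these two scales $\mu^{\kappa}$ and $\mu$ compete with $\kappa < 1$, the abstract theorem yields null-controllability in arbitrarily small time $T > 0$.

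The first concrete step is to translate the spectral inequality for $H_{k,m}$ into one for its fractional power $H^s_{k,m}$. Because $H^s_{k,m}$ and $H_{k,m}$ share the same spectral projections, one has $\cE_\mu(H^s_{k,m}) = \cE_{\mu^{1/s}}(H_{k,m})$; setting $\lambda = \mu^{1/s}$ in \eqref{eq:specIneq} and using the hypotheses $\sigma(x) \geq \theta^{\sprod{x}^a}$ and $\rho(x) \leq L\sprod{x}^\delta$, together with the bounds on $\theta_{\lambda,k}$ and $L_{\lambda,k}$ recorded in Remark~\ref{rk:mainComparison}, one obtains for $\lambda \geq 1$ (equivalently $\mu \geq 1$) an estimate of the shape
\[
	\norm{g}_{L^2(\RR^d)}^2
	\leq
	C_0 \, e^{C_1 \lambda^{\frac{\delta+a}{2k} + \frac1{2m}}} \norm{g}_{L^2(\omega)}^2
	,\quad
	g \in \cE_{\mu}(H^s_{k,m})
	,
\]
where I have absorbed the $\log(1+\lambda)$ term and the factors $(1+\lambda^{a/2k})$, $L\lambda^{\delta/2k}$, etc., into the exponential by enlarging the exponent slightly — all these are dominated by $\lambda^{\frac{\delta+a}{2k}+\frac1{2m}}$ up to constants, using $\delta \leq 1$ so that $\delta(\frac1{2k}+\frac1{2m}) \leq \frac{\delta+a}{2k}+\frac1{2m}$ when combined with the extra $\lambda^{a/2k}$ factor (this is the point where one checks the arithmetic carefully). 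Re-expressing in terms of $\mu = \lambda^s$, the exponent becomes $\mu^{\frac1s(\frac{\delta+a}{2k}+\frac1{2m})}$, and the exponent of $\mu$ is
\[
	\kappa
	:=
	\frac1s\Bigl(\frac{\delta+a}{2k}+\frac1{2m}\Bigr)
	.
\]
The assumption $\frac{\delta+a}{2k}+\frac1{2m} < s$ is precisely equivalent to $\kappa < 1$.

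With $\kappa < 1$ established, the second step is to invoke the abstract Lebeau--Robbiano-type null-controllability theorem: given a non-negative selfadjoint operator $P$ on a Hilbert space whose spectral subspaces satisfy a spectral inequality with exponent $\kappa < 1$ and whose semigroup satisfies the trivial dissipation bound above, the Cauchy problem $\partial_t f + Pf = \bmone_\omega h$ is exactly null-controllable in every time $T > 0$. Applying this with $P = H^s_{k,m}$ concludes the proof. One should also note the boundary case: for small $\mu$ (say $\mu < 1$) the spectral inequality is trivially of the required exponential form since the exponent is bounded, so the hypothesis of the abstract theorem holds for all $\mu \geq 0$.

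The main obstacle — really the only nontrivial point — is the bookkeeping in the first step: one must verify that, under the standing assumption $\delta \in [0,1]$, every term appearing in the exponent of \eqref{eq:specweakly} (namely $(1+\lambda^{a/2k})$ multiplied by $1 + L^{1+k/m}\lambda^{\delta(\frac1{2k}+\frac1{2m})} + L\lambda^{\frac{\delta}{2k}+\frac1{2m}} + \log(1+\lambda)$) is, for $\lambda \geq 1$ and up to a multiplicative constant depending on $k,m,d,L,\theta,a,\delta$, bounded by $C\lambda^{\frac{\delta+a}{2k}+\frac1{2m}}$. This reduces to the elementary inequalities $\delta(\frac1{2k}+\frac1{2m}) + \frac{a}{2k} \leq \frac{\delta+a}{2k}+\frac1{2m}$ (which holds since $\delta \leq 1$ forces $\frac{\delta}{2m} \leq \frac1{2m}$) and $\frac{\delta}{2k}+\frac1{2m}+\frac{a}{2k} = \frac{\delta+a}{2k}+\frac1{2m}$, together with $\log(1+\lambda) = o(\lambda^\varepsilon)$ for any $\varepsilon > 0$ and the fact that $\frac{\delta+a}{2k}+\frac1{2m} > 0$ strictly. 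Once this is checked, the reduction to the abstract theorem is automatic and the conclusion follows for all $T > 0$.
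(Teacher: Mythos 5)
Your argument is correct and follows the same route as the paper's: translate the spectral inequality for $H_{k,m}$ into one for $H^s_{k,m}$ via the transformation formula for spectral measures, check that the resulting exponent $\kappa = \frac1s\bigl(\frac{\delta+a}{2k}+\frac1{2m}\bigr)$ is strictly less than $1$ under the stated hypothesis, and then invoke the abstract Lebeau--Robbiano observability result (Theorem~\ref{thm:obs}, i.e.\ \cite[Theorem~2.8]{NakicTTV-20}). Your careful bookkeeping of the terms in \eqref{eq:specweakly} — in particular using $\delta \leq 1$ to bound $\delta(\frac1{2k}+\frac1{2m})+\frac{a}{2k}$ by $\frac{\delta+a}{2k}+\frac1{2m}$, and absorbing $\log(1+\lambda)$ via $\frac{\delta}{2k}+\frac1{2m}>0$ — matches exactly what is implicit in the paper's shorter phrasing.
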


\begin{rk}
	Corollary~\ref{cor:fractionalShubin} extends \cite[Corollary 2.12]{Martin-22} (cf.\ also \cite[Corollary~1.2]{DickeS-22}), which
	only deals with the case $a = 0$. Moreover, recall from \cite[Theorem 2.5]{Martin} (whose proof is based on the general spectral
	inequalities \eqref{eq:genesti}) that whenever $s>1/(2k)+1/(2m)$, the equation \eqref{eq:fractionalShubin} is exactly
	null-controllable from every measurable control support $\omega \subset \RR^d$ with positive measure and in every positive time
	$T>0$. Corollary~\ref{cor:fractionalShubin} therefore provides a new result only in the case $0\le\delta+a < 1$.
\end{rk}

\begin{cor}\label{cor:contlog}
	Let $\omega \subset \RR^d$ be a measurable set as in \eqref{eq:omega}, where the function $\sigma$ satisfies
	\[
		\sigma(x)
		\geq
		\theta^{\sprod{x}^a}
		,\quad
		x \in \RR^d
		,
	\]
	with some fixed $\theta \in (0,1]$ and $a \geq 0$, and the function $\rho$ exhibits a growth at infinity that is slower
	than any power, that is,
	\[
		\forall \delta>0
		,\quad
		\rho(x) = o(\abs{x}^{\delta})\quad\text{as $\abs{x}\rightarrow+\infty$}
		.
	\]
	Then, for all $s > a/2k + 1/2m$, the equation \eqref{eq:fractionalShubin} is exactly null-controllable from the control support
	$\omega$ in every positive time $T>0$.
\end{cor}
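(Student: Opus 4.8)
The plan is to deduce this from Corollary~\ref{cor:fractionalShubin}: a subpower growth of $\rho$ means that, for the polynomial upper bound on $\rho$ required there, the exponent $\delta$ may be taken arbitrarily small, and the strict inequality $s > \frac{a}{2k} + \frac1{2m}$ leaves just enough room to absorb such a small $\delta$ into the condition $\frac{\delta+a}{2k} + \frac1{2m} < s$.

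First I would upgrade the asymptotic hypothesis on $\rho$ to a global bound: for every $\delta > 0$ there is $L_\delta > 0$ with
\[
	\rho(x) \leq L_\delta \sprod{x}^\delta
	,\quad
	x \in \RR^d
	.
\]
Indeed, $\rho(x) = o(\abs{x}^\delta)$ as $\abs{x} \to +\infty$ forces $\rho(x)/\sprod{x}^\delta$ to be bounded for $\abs{x} \geq R$ with $R$ large; on the compact ball $\overline{B(0,R)}$ the function $\rho$ is bounded since it is locally bounded (as is implicitly assumed for the geometric condition~\eqref{eq:omega}); and $\sprod{x}^\delta \geq 1$ everywhere. Taking $L_\delta$ to be the maximum of these two bounds gives the claim. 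Note also that the hypothesis on $\sigma$ yields $1/\sigma(x) \leq \theta^{-\sprod{x}^a}$, which is locally bounded, so all standing assumptions on $\rho$ and $\sigma$ hold.

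Next, since $s > \frac{a}{2k} + \frac1{2m}$, the number $2ks - a - \tfrac{k}{m}$ is strictly positive, and I would fix
\[
	\delta \in \Bigl( 0 , \min\bigl\{ 1 , 2ks - a - \tfrac{k}{m} \bigr\} \Bigr)
	,
\]
so that $\delta \in [0,1]$ and $\frac{\delta+a}{2k} + \frac1{2m} < s$. With this choice of $\delta$ and with $L := L_\delta$ from the previous step, the functions $\sigma$ and $\rho$ satisfy precisely the hypotheses of Corollary~\ref{cor:fractionalShubin} with the same $\theta$ and $a$. That corollary then immediately gives that \eqref{eq:fractionalShubin} is exactly null-controllable from $\omega$ in every positive time $T > 0$, which is the assertion.

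Unwinding Corollary~\ref{cor:fractionalShubin}, this amounts to the following: with $\delta$ chosen as above, the spectral inequality \eqref{eq:specweakly} for $H_{k,m}$, transported to $H_{k,m}^s$ via $\cE_\lambda(H_{k,m}^s) = \cE_{\lambda^{1/s}}(H_{k,m})$, has $\lambda$-exponent $\tfrac1s\bigl(\tfrac{a+\delta}{2k} + \tfrac1{2m}\bigr) < 1$ in the leading term (the $\log(1+\lambda)$-contribution being of lower order), which is exactly what the Lebeau--Robbiano-type controllability argument behind Corollary~\ref{cor:fractionalShubin} requires. I do not anticipate any real obstacle: the only non-formal ingredient is the routine upgrade of ``$\rho(x) = o(\abs{x}^\delta)$ for every $\delta$'' plus local boundedness to the uniform bound $\rho(x) \leq L_\delta\sprod{x}^\delta$, and everything else is a direct appeal to results already established.
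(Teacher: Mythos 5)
Your proof is correct and follows essentially the same route as the paper: choose $\delta \in (0,1)$ small enough that $\frac{\delta+a}{2k}+\frac1{2m}<s$, upgrade the little-$o$ hypothesis on $\rho$ to a global bound $\rho(x)\le L\sprod{x}^\delta$ using local boundedness, and invoke Corollary~\ref{cor:fractionalShubin}. The only difference is that you spell out the compactness argument and the explicit admissible range for $\delta$, which the paper leaves implicit.
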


\begin{rk}
	Corollary~\ref{cor:contlog} is, in fact, a quite straightforward consequence of Corollary~\ref{cor:fractionalShubin}, see
	Section~\ref{ssec:proofCorollariesShubin} below. Nevertheless, it should be mentioned that the particular case of $a = 0$,
	although not explicitly stated in the literature, could have been proven also by using the results from
	\cite[Chapter 6, Section 3]{Martinthesis}.
\end{rk}

It is well known from \cite[Theorem 1.10]{M} that the equation $($\hyperref[eq:fractionalShubin]{$E_{1,1,1}$}$)$ is not
null-controllable in any positive time whenever the control support $\omega \subset \RR^d$ is contained in a half space. In fact,
it can be readily checked that a half space satisfies a geometric condition of the form \eqref{eq:omega} with a constant function
$\sigma$ and a function $\rho$ taking the form
\[
	\rho(x)
	=
	L \sprod{x}
	,\quad
	x \in \RR^d,
\]
with some $L > 0$. Note that the latter exhibits a linear growth and is thus indeed excluded in
Corollaries~\ref{cor:fractionalShubin} and~\ref{cor:contlog} above. This, however, raises the question whether local scales
$\rho$ can be allowed that exhibit an arbitrary sublinear growth. A first step in this direction is taken by the following last
result of this subsection.

\begin{cor}\label{cor:harmolog}
	Let $\omega \subset \RR^d$ be a measurable set as in \eqref{eq:omega}, and suppose that the function $\sigma$ is
	constant and that $\rho$ satisfies
	\begin{equation}\label{eq:harmolog}
		\rho(x)
		\leq
		\frac{L\sprod{x}}{(g\circ g)^\alpha(\abs{x})g(\abs{x})}
		\quad\text{ where }\quad
		g(r)
		=
		\log(e+r)
		,\quad
		r \geq 0
		,
	\end{equation}
	with some $L > 0$ and $\alpha > 2$. Then, the equation $($\hyperref[eq:fractionalShubin]{$E_{1,1,1}$}$)$ is exactly
	null-controllable from the control support $\omega$ in every positive time $T>0$.
\end{cor}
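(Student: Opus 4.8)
The plan is to feed the harmonic-oscillator spectral inequality of Theorem~\ref{thm:mainharmo} into the Lebeau--Robbiano strategy carried out in Section~\ref{sec:cont}; the only genuine work is the asymptotic analysis of the quantity $L_\lambda$ produced by the particular local scale in \eqref{eq:harmolog}.

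Since $\omega$ satisfies the geometric condition \eqref{eq:omega} with the \emph{constant} function $\sigma$, Theorem~\ref{thm:mainharmo} applies with $\theta_\lambda = \sigma$ and $L_\lambda = \sup_{\abs{x}<\sqrt{2\lambda}}\rho(x)$, giving
\[
	\norm{f}_{L^2(\RR^d)}^2
	\leq
	\Bigl(\tfrac K\sigma\Bigr)^{K(1+(L_\lambda)^2+L_\lambda\sqrt\lambda)}\norm{f}_{L^2(\omega)}^2
	,\qquad
	f\in\cE_{\lambda,1,1},\ \lambda\geq0
	.
\]
It then remains to control the exponent. Writing $g(r)=\log(e+r)$ and $h(r)=L\sprod{r}/\bigl((g\circ g)^\alpha(r)\,g(r)\bigr)$ for $r\geq0$, the function $h$ is continuous, bounded on compacts, and non-decreasing for large $r$ (polynomial growth of $\sprod{r}$ against poly-logarithmic growth of $(g\circ g)^\alpha g$); combining $\rho(x)\leq h(\abs{x})$ with elementary lower bounds for $g(\sqrt{2\lambda})$ and $(g\circ g)(\sqrt{2\lambda})$ one obtains, for all large enough $\lambda$,
\[
	L_\lambda
	\leq
	h(\sqrt{2\lambda})+C_0
	\leq
	\frac{C_1\,\sqrt\lambda}{(\log\log\lambda)^\alpha\log\lambda}
	,
\]
with $C_0,C_1>0$ depending only on $L$ and $\alpha$. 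In particular $(L_\lambda)^2\leq L_\lambda\sqrt\lambda$ for $\lambda$ large, so the exponent obeys $K(1+(L_\lambda)^2+L_\lambda\sqrt\lambda)\leq C_2\,\lambda/\psi(\lambda)$ for $\lambda\geq\lambda_0$, where $\psi(\lambda):=\bigl(\log(e+\log(e+\lambda))\bigr)^\alpha\log(e+\lambda)$ is positive and tends to $+\infty$; since the exponent is bounded for $0\leq\lambda<\lambda_0$, we conclude that there are constants $C_2,C_3>0$ with
\[
	\norm{f}_{L^2(\RR^d)}^2
	\leq
	C_3\exp\!\Bigl(\frac{C_2\,\lambda}{\psi(\lambda)}\Bigr)\norm{f}_{L^2(\omega)}^2
	,\qquad
	f\in\cE_{\lambda,1,1},\ \lambda\geq0
	.
\]

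Finally, I would invoke the abstract null-controllability criterion of Section~\ref{sec:cont} for the operator $P=H_{1,1}$, whose semigroup satisfies the dissipation estimate $\norm{\bmone_{(\lambda,+\infty)}(H_{1,1})\,e^{-tH_{1,1}}}\leq e^{-t\lambda}$ for $t,\lambda\geq0$, together with the spectral inequality just derived (of the form $C_3\exp(C_2\lambda/\psi(\lambda))$ with $\psi$ increasing to $+\infty$). Testing the summability requested by the criterion along the geometric sequence $\lambda_n=2^n$ reduces it, up to multiplicative constants, to the convergence of $\sum_n\lambda_{n+1}\bigl(\lambda_n\,\psi(\lambda_{n+1})\bigr)^{-1}\asymp\sum_n\psi(2^{n+1})^{-1}\asymp\sum_n n^{-1}(\log n)^{-\alpha}$, which holds under the hypothesis $\alpha>2$. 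This produces, for every $T>0$, the observability estimate $\norm{e^{-TH_{1,1}}f_0}^2\leq C_\obs(T)\int_0^T\norm{e^{-tH_{1,1}}f_0}_{L^2(\omega)}^2\,\dd t$, which by the usual duality is equivalent to the exact null-controllability of $(E_{1,1,1})$ from $\omega$ in time $T$.

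The main obstacle, and the reason the statement is delicate, is its borderline character: for $H_{1,1}$ the dissipation rate is linear in $\lambda$, while a half-space-like local scale $\rho(x)\asymp\sprod{x}$ already produces a spectral-inequality exponent linear in $\lambda$, for which null-controllability genuinely fails by \cite[Theorem~1.10]{M}. Everything therefore rests on the poly-logarithmic gain in \eqref{eq:harmolog} being just strong enough to make the relevant series converge, so the real content of the argument is the careful two-sided bookkeeping of the iterated logarithms entering $L_\lambda$ — in particular the harmless non-monotonicity of $h$ near the origin — rather than any new analytic ingredient.
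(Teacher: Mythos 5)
Your derivation of the spectral inequality is exactly the paper's: with constant $\sigma$, Theorem~\ref{thm:mainharmo} plus the elementary observation that $L_\lambda=\sup_{\abs{x}<\sqrt{2\lambda}}\rho(x)\lesssim \sqrt\lambda/((\log\log\lambda)^\alpha\log\lambda)$ for large $\lambda$ gives $\norm{f}_{L^2(\RR^d)}^2\leq c\exp\bigl(c\lambda/((\log\log\lambda)^\alpha\log\lambda)\bigr)\norm{f}_{L^2(\omega)}^2$. The gap is in the final step, where you attempt to reconstruct the Lebeau--Robbiano iteration rather than citing Theorem~\ref{thm:millerduy} (Duyckaerts--Miller), which is precisely the black-box result the paper invokes and whose hypothesis is that the iterated-logarithm prefactor decays with $\alpha>2$.

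Your reconstruction is not correct. With the geometric ladder $\lambda_n=2^n$ one has $\psi(\lambda_n)\asymp n(\log n)^\alpha$, so the series you reduce the problem to, $\sum_n\psi(2^{n+1})^{-1}\asymp\sum_n n^{-1}(\log n)^{-\alpha}$, already converges for every $\alpha>1$. If this were the correct summability criterion, Corollary~\ref{cor:harmolog} would hold for all $\alpha>1$, making the hypothesis $\alpha>2$ spurious---yet $\alpha>2$ is exactly the threshold in \cite[Theorem~5]{DM}. At the critical power (exponent linear in $\lambda$) the naive geometric telescoping of the Lebeau--Robbiano iteration does not close, because the high-frequency remainder accumulates rather than dying off; the Duyckaerts--Miller argument requires a finer, non-geometric choice of frequency scales together with a correspondingly stronger summability condition, and this is where $\alpha>2$ genuinely enters. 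Writing the proof as you have suggests a stronger result that is (in all likelihood) false and, in any case, hides the one place where the hypothesis $\alpha>2$ is used. You should simply quote Theorem~\ref{thm:millerduy} here, as the paper does.
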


\subsubsection{The Baouendi-Grushin heat equation}\label{subsubsec:grushin}

Let us now consider the fractional heat-like hypoelliptic evolution equation associated with the Baouendi-Grushin operator,
\begin{equation}\label{eq:bagrushin}\tag{$E_{\gamma,s}$}
	\begin{cases}
		\partial_tf(t,x,y) + (- \Delta_{\gamma})^sf(t,x,y) = h(t,x,y)\bmone_{\omega}(x,y),\quad t>0,\ (x,y)\in\RR^d\times\TT^d, \\
		f(0,\cdot,\cdot) = f_0\in L^2(\RR^d\times\mathbb T^d),
	\end{cases}
\end{equation}
where $s>0$ and $\gamma\geq1$ is a positive integer. Here, the Baouendi-Grushin operator $\Delta_{\gamma}$ acting on
$\RR^d \times \TT^d$,
\[
	\Delta_{\gamma}
	=
	\Delta_x + \abs{x}^{2{\gamma}}\Delta_y
	,\quad
	(x,y) \in \RR^d \times \TT^d,
\]
is equipped with its maximal domain, which makes it a positive selfadjoint operator. Note that the hypothesis that $\RR^d$ and
$\TT^d$ have the same spacial dimension $d$ is just for simplicity, and nothing substantial would change if different dimensions
would be allowed.

Our first result regarding the equation \eqref{eq:bagrushin} gives a necessary geometric condition on the control support $\omega$
for \eqref{eq:bagrushin} to be exactly null-controllable. It holds for all dissipation parameters $s > 0$.

\begin{prop}\label{prop:neccond}
	If the equation \eqref{eq:bagrushin} is exactly null-controllable from the control support $\omega \subset \RR^d \times \TT^d$,
	then there exist $L>0$ and $\theta \in (0,1]$ such that
	\begin{equation}\label{eq:thickRT}
		\forall x \in \RR^d
		,\quad
		\abs{ \omega \cap (B(x,L) \times \TT^d) }
		\geq
		\theta \abs{B(x,L)}
		.
	\end{equation}
\end{prop}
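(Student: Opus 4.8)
The plan is to argue by contraposition: assuming that no pair $(L,\theta)$ satisfies \eqref{eq:thickRT}, I would construct an initial datum $f_0 \in L^2(\RR^d \times \TT^d)$ that cannot be steered to zero, thereby contradicting exact null-controllability. The starting point is the standard duality characterization: exact null-controllability of \eqref{eq:bagrushin} from $\omega$ in time $T$ is equivalent to the observability inequality
\[
	\norm{e^{-T(-\Delta_\gamma)^s} u_0}_{L^2(\RR^d \times \TT^d)}^2
	\leq
	C_T \int_0^T \norm{\bmone_\omega\, e^{-t(-\Delta_\gamma)^s} u_0}_{L^2(\RR^d \times \TT^d)}^2 \,\dd t
	,\quad
	u_0 \in L^2(\RR^d \times \TT^d)
	.
\]
The negation of \eqref{eq:thickRT} means: for every $n \in \NN$ there exists $x_n \in \RR^d$ with
\[
	\abs{ \omega \cap (B(x_n,n) \times \TT^d) }
	<
	\frac1n\, \abs{B(x_n,n)}
	,
\]
so $\omega$ is, at larger and larger scales, negligible on large slabs $B(x_n,n) \times \TT^d$; in particular $|x_n| \to \infty$ is forced if $\omega$ has infinite measure, and even if it does not, the slabs escape to infinity in a suitable sense.

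The key step is to use the separation-of-variables structure of $\Delta_\gamma$: decomposing along the Fourier modes $e^{i n\cdot y}$ on $\TT^d$ reduces $-\Delta_\gamma$ to the family of anisotropic Shubin-type operators $-\Delta_x + |n|^2 |x|^{2\gamma}$ on $L^2(\RR^d)$. After the rescaling $x \mapsto |n|^{-1/(\gamma+1)} x$, these become $|n|^{2/(\gamma+1)} H_{\gamma,1}$ up to the obvious normalization, whose ground states are Gaussian-type and concentrated on balls of radius $\sim |n|^{-1/(\gamma+1)}$ around the origin. Translating the whole construction is not available (the potential $|x|^{2\gamma}\Delta_y$ is not translation invariant), so instead I would test the observability inequality with wave packets $u_0^{(n)}$ built from the bottom of the spectrum of the slice operator associated to a frequency $|n|$ chosen so that the corresponding concentration length is of order $n$ and the packet lives essentially inside $B(x_n, n) \times \TT^d$ — this is where the known Agmon/decay estimates for spectral subspaces of the Shubin operators (Corollary~\ref{cor:locBernstein} and the companion Agmon bounds referenced in Remark~\ref{rk:agmon}) do the heavy lifting, guaranteeing that the $L^2$-mass of $e^{-t(-\Delta_\gamma)^s}u_0^{(n)}$ outside that slab is exponentially small uniformly in $t \in [0,T]$. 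Since $\omega$ occupies at most a fraction $1/n$ of that slab (in measure), and the semigroup only mildly spreads a low-energy packet, the right-hand side of the observability inequality tends to $0$ while the left-hand side stays bounded below, contradicting the fixed constant $C_T$.

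The main obstacle I anticipate is twofold. First, one must control the spreading of the packet under the fractional heat flow $e^{-t(-\Delta_\gamma)^s}$: although the packet starts at spectral energy $\lesssim |n|^{2/(\gamma+1)}$ and the semigroup is a contraction, the factor $\bmone_\omega$ acts in the \emph{spatial} variable while $\omega$'s smallness is only \emph{measure}-theoretic on the slab — so one needs a quantitative bound showing that a low-energy state restricted to a set of small relative measure inside a controlled ball has small $L^2$-norm there; this should follow by combining the localization of the packet in $B(x_n,n)\times\TT^d$ with a crude norm estimate $\norm{\bmone_\omega v}_{L^2} \le \norm{v}_{L^\infty}|\omega \cap (B(x_n,n)\times\TT^d)|^{1/2}$ together with a hypercontractivity/Agmon-type $L^\infty$ bound for the packet. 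Second, one must verify that the mass of $u_0^{(n)}$ genuinely escapes the reach of $\omega$ for all large $n$ simultaneously, which requires the counterexample scales $n$ to be chosen in a way compatible with the fixed time $T$; this is a matter of tracking constants and choosing $n$ large enough, but it is the delicate quantitative heart of the argument. Once these two points are settled, letting $n \to \infty$ yields the desired contradiction and hence \eqref{eq:thickRT}.
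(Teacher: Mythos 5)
Your proposal has a genuine gap at its heart: the construction of the test functions does not work. You propose to build wave packets from the ground states of the slice operators $H_{\gamma;|n'|} = -\Delta_x + |n'|^2 |x|^{2\gamma}$ and to place them inside the slabs $B(x_n,n)\times\TT^d$. But for $|n'|\geq 1$, the ground state of $H_{\gamma;|n'|}$ is concentrated near the \emph{origin} (at scale $|n'|^{-1/(\gamma+1)}\leq 1$), and — as you yourself observe — the operator is not translation invariant, so there is no way to move that ground state to an arbitrary center $x_n$ that escapes to infinity. Moreover, the concentration scale $|n'|^{-1/(\gamma+1)}$ is maximized at $|n'|=1$ and shrinks as $|n'|$ grows, so you cannot make it of order $n$ for large $n$ by choosing any integer frequency. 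For $|n'|=0$ the slice operator degenerates to the free Laplacian $-\Delta_x$, which has no ground state in $L^2$, so the same construction does not apply there either.

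The piece you are missing is precisely how the paper avoids this obstruction: it restricts attention to the zero Fourier mode in $y$, i.e., functions $g\in L^2(\RR^d)$ viewed as constant in $y$. For those, $|x|^{2\gamma}\Delta_y g = 0$, so the Baouendi--Grushin flow collapses to the \emph{translation-invariant} fractional heat flow $e^{-t(-\Delta_x)^s}$. One can then plug in translated Gaussians $g_{x_0}(x)=e^{-|x-x_0|^2/2}$: by translation invariance of the flow and of the $L^2$-norm, the left-hand side of the observability estimate is independent of $x_0$, and one chooses $L$ once and for all (independently of $x_0$) so that the bulk of the evolved Gaussian's mass remains in $B(x_0,L)$ uniformly for $t\in[0,T]$. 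An $L^\infty$-bound and Cauchy--Schwarz on the right-hand side then yield a lower bound on $|\omega\cap(B(x_0,L)\times\TT^d)|$ of the form $\theta|B(x_0,L)|$ with $\theta$ independent of $x_0$ — this is the thickness condition directly, no contraposition needed. Your contraposition framing is fine in principle, but without the zero-mode/translation-invariance observation there is no way to place the test functions where the contraposition hypothesis says $\omega$ is sparse; the Agmon/localization machinery you invoke only tells you where the slice-operator eigenfunctions live (near $x=0$), not that you can relocate them.
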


Positive null-controllability results for the equation \eqref{eq:bagrushin} strongly depend on how the dissipation parameter $s$
relates to the critical hypoelliptic parameter $(1+\gamma)/2$. Let us first state a precise characterisation of
null-controllability for a particular class of control supports in the strong dissipation regime $s > (1+\gamma)/2$.

\begin{thm}\label{thm:bagrushincont}
	Suppose that $s > (1+\gamma)/2$, and let $T > 0$ and $\omega \subset \RR^d$ be measurable. The
	following assertions are equivalent:
	\begin{enumerate}

		\item[$(i)$]
		The equation \eqref{eq:bagrushin} is exactly null-controllable from the control support $\omega \times \TT^d$ in time $T$.

		\item[$(ii)$]
		The set $\omega$ is thick in $\RR^d$.

	\end{enumerate}
\end{thm}

\begin{rk}\label{rk:bagrushincont}
	The techniques presented in the current work only allow to consider in the above result control supports that are strips of the
	form $\omega \times \TT^d$, but not more general control supports satisfying the condition \eqref{eq:thickRT}. The latter
	require a more sophisticated approach, which we postpone to a follow-up paper \cite{AS}. In particular, we prove there that
	Theorem~\ref{thm:bagrushincont} also holds for such more general control supports.
\end{rk}

In the critical dissipation regime $s = (1+\gamma)/2$, we state a positive null-controllability results from strips,
and also a negative one for control supports avoiding the degeneracy line $\{x = 0\}$.

\begin{thm}\label{thm:critdiss}
	Suppose that $s = (1+\gamma)/2$, and denote by $\lambda_{\gamma}>0$ the smallest eigenvalue of the anharmonic
	oscillator $H_{\gamma,1}$.
	\begin{enumerate}

		\item[$(i)$]
		For every measurable set $\omega \subset \RR^d \times \TT^d$ satisfying the condition
		$\overline{\omega} \cap \{x=0\} = \emptyset$, the equation \eqref{eq:bagrushin} is never exactly null-controllable from $\omega$
		in time $T>0$ when $0 <T < T_*$, where the time $T_*>0$ is given by
		\[
			T_*
			=
			\frac1{1+\gamma} \biggl( \frac{\dist(0,\omega)}{\sqrt{\lambda_{\gamma}}} \biggr)^{1+\gamma}
			.
		\]

		\item[$(ii)$]
		There exists a positive constant $c_{\gamma} > 0$ such that for every $(\theta,L)$-thick set $\omega \subset \RR^d$,
		the equation \eqref{eq:bagrushin} is exactly null-controllable from the control support $\omega \times \TT^d$ in every
		positive time $T \geq T^*$, where $T^*>0$ is given by
		\[
			T^*
			=
			c_{\gamma}\,\biggl( \frac L{\sqrt{\lambda_{\gamma}}} \biggr)^{1+\gamma}\,\log\Bigl(\frac{c_{\gamma}}{\theta}\Bigr)
			.
		\]

	\end{enumerate}
\end{thm}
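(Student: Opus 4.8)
The plan is to pass, in both parts, to the partial Fourier series in the $y$-variable. Writing $f(x,y) = \sum_{n \in \ZZ^d} f_n(x) e^{\mathrm{i} n \cdot y}$ turns $-\Delta_\gamma$ into the orthogonal sum $\bigoplus_{n \in \ZZ^d} \mathcal A_n$ of the fibre operators $\mathcal A_n = -\Delta_x + \abs{n}^2 \abs{x}^{2\gamma}$ on $L^2(\RR^d_x)$, and hence $(-\Delta_\gamma)^s = \bigoplus_{n} \mathcal A_n^s$. The crucial algebraic fact is the exact dilation identity $\mathcal A_n = S_{\rho_n}\bigl( \abs{n}^{2/(1+\gamma)} H_{\gamma,1} \bigr) S_{\rho_n}^{-1}$, where $\rho_n = \abs{n}^{1/(1+\gamma)}$ and $S_\rho$ denotes the unitary $L^2$-dilation by the factor $\rho$; in the critical regime $s = (1+\gamma)/2$ this becomes $\mathcal A_n^s = S_{\rho_n}\bigl( \abs{n}\, H_{\gamma,1}^{(1+\gamma)/2} \bigr) S_{\rho_n}^{-1}$, so that $\mathcal A_n^s \geq \abs{n}\lambda_\gamma^{(1+\gamma)/2}$, the bottom of the spectrum being the eigenvalue $\abs{n}\lambda_\gamma^{(1+\gamma)/2}$ with normalised eigenfunction $\phi_n(x) = \rho_n^{d/2}\psi(\rho_n x)$, $\psi$ being the ground state of $H_{\gamma,1}$. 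By duality, each assertion reduces to proving or disproving an observability estimate for $e^{-t(-\Delta_\gamma)^s}$ from the relevant control region; for part $(ii)$ that region is the strip $\omega \times \TT^d$, for which the estimate decouples into the family of fibre estimates
\[
	\norm{e^{-T\mathcal A_n^s}g}_{L^2(\RR^d)}^2 \leq C \int_0^T \norm{e^{-t\mathcal A_n^s}g}_{L^2(\omega)}^2 \, \dd t, \quad g \in L^2(\RR^d),\ n \in \ZZ^d,
\]
which must hold with a constant $C > 0$ \emph{uniform in $n$}.

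For the negative part $(i)$, I would test the observability inequality for $e^{-t(-\Delta_\gamma)^s}$ with the explicit eigenfunctions $g_0 = \phi_n(\cdot) e^{\mathrm{i} n \cdot y}$, which are eigenfunctions of $(-\Delta_\gamma)^s$ with eigenvalue $\abs{n}\lambda_\gamma^{(1+\gamma)/2}$. Since $\overline\omega$ is closed and disjoint from the compact set $\{x=0\}$, the number $\delta := \dist(0,\omega)$ is positive and $\omega \subset \{\abs{x} \geq \delta\} \times \TT^d$, so the inequality forces $e^{-2T\abs{n}\lambda_\gamma^{(1+\gamma)/2}} \lesssim C \abs{n}^{-1} \norm{\phi_n}_{L^2(\{\abs{x} \geq \delta\})}^2$. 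The decisive input is the sharp Agmon decay of the ground state: for every $\varepsilon > 0$ there is $C_\varepsilon > 0$ with $\abs{\psi(x)} \leq C_\varepsilon \exp\bigl( -(1-\varepsilon) \rho_{\mathrm{Ag}}(x) \bigr)$, where $\rho_{\mathrm{Ag}}(x) = \int_{\lambda_\gamma^{1/(2\gamma)}}^{\abs{x}} \sqrt{t^{2\gamma} - \lambda_\gamma}\, \dd t$, together with the elementary asymptotics $\rho_{\mathrm{Ag}}(R) = \frac{R^{1+\gamma}}{1+\gamma}(1 + o(1))$ as $R \to \infty$. After the dilation this gives $\norm{\phi_n}_{L^2(\{\abs{x} \geq \delta\})}^2 \leq C e^{-2(1-\varepsilon)\abs{n}\delta^{1+\gamma}/(1+\gamma)}$ for $\abs{n}$ large; inserting this, letting $\abs{n} \to \infty$ and then $\varepsilon \to 0$, the observability estimate — hence null-controllability — can hold only if $T \geq \frac{1}{1+\gamma}(\delta/\sqrt{\lambda_\gamma})^{1+\gamma} = T_*$, which is exactly the claim.

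For the positive part $(ii)$, I would treat the fibre $n = 0$ separately — there $\mathcal A_0^s = (-\Delta)^{(1+\gamma)/2}$ with exponent $(1+\gamma)/2 > 1/2$, and this fractional heat equation on $\RR^d$ is null-controllable from every thick set in arbitrary positive time with a finite observability constant — and for $n \neq 0$ argue in two steps. First, fix an arbitrarily small $T_0 > 0$ and combine the dilation identity with Theorem~\ref{thm:main} for $H_{\gamma,1}$ (the case $k = \gamma$, $m = 1$) applied to the $(\theta,L)$-thick set $\omega$; this yields the fibred spectral inequality
\[
	\norm{g}_{L^2(\RR^d)}^2 \leq \biggl( \frac{K}{\theta} \biggr)^{K(1 + \abs{n}L^{1+\gamma} + L\Lambda^{1/(1+\gamma)} + \log(1+\Lambda))} \norm{g}_{L^2(\omega)}^2, \quad g \in \cE_\Lambda(\mathcal A_n^s),
\]
which, together with the dissipation bound $\norm{e^{-t\mathcal A_n^s}(I - \cE_\Lambda(\mathcal A_n^s))h}_{L^2} \leq e^{-t\Lambda}\norm{h}_{L^2}$ and the Lebeau--Robbiano telescoping (legitimate because the $\Lambda$-dependent part of the exponent grows only like $\Lambda^{1/(1+\gamma)}$, the $\log(1+\Lambda)$-term being a mere polynomial prefactor), produces an observability estimate in time $T_0$ for the $n$-th fibre with constant bounded by
\[
	\biggl( \frac{K}{\theta} \biggr)^{K\abs{n}L^{1+\gamma}} \exp\!\Bigl( c_\gamma (L\log(K/\theta))^{(1+\gamma)/\gamma} T_0^{-1/\gamma} \Bigr),
\]
where the second factor is independent of $n$. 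Second, using $\norm{e^{-T_1 \mathcal A_n^s}h}_{L^2} \leq e^{-T_1 \abs{n}\lambda_\gamma^{(1+\gamma)/2}}\norm{h}_{L^2}$ one promotes this to an observability estimate in time $T = T_0 + T_1$ with constant at most $\exp\!\bigl( \abs{n}(K L^{1+\gamma}\log(K/\theta) - 2T_1 \lambda_\gamma^{(1+\gamma)/2}) \bigr)$ times the $n$-independent factor above; this is bounded uniformly in $n$ precisely when $T_1 \geq \frac{K}{2}(L/\sqrt{\lambda_\gamma})^{1+\gamma}\log(K/\theta)$. Letting $T_0 \to 0$ and adjusting constants, one obtains the required uniform fibre observability — hence, by duality, exact null-controllability of \eqref{eq:bagrushin} from $\omega \times \TT^d$ — for every $T \geq T^* = c_\gamma (L/\sqrt{\lambda_\gamma})^{1+\gamma}\log(c_\gamma/\theta)$.

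The main obstacle is the quantitative interplay in $(ii)$: one has to run the Lebeau--Robbiano scheme on each fibre with fully explicit control of the dependence on $\abs{n}$ and on $(\theta,L)$, and then recognise that the minimal controllability time is governed exactly by the competition between the $\abs{n}$-linear, $\Lambda$-\emph{independent} cost $(K/\theta)^{K\abs{n}L^{1+\gamma}}$ in the fibred spectral inequality — a cost produced by the dilation, which the $\Lambda \to \infty$ part of the telescoping cannot absorb — and the $\abs{n}$-linear spectral gap $\abs{n}\lambda_\gamma^{(1+\gamma)/2}$ of $\mathcal A_n^s$; all remaining contributions (the $\log(1+\Lambda)$-term, the subcritical $\Lambda^{1/(1+\gamma)}$-term, and the $n=0$ fibre) being harmless in finite time. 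A secondary delicate point is obtaining the \emph{sharp} constant $T_*$ in $(i)$, which hinges on the leading-order Agmon asymptotics above rather than on a crude exponential localisation such as Corollary~\ref{cor:locBernstein}.
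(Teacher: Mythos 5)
Your proposal is correct and follows essentially the same route as the paper: partial Fourier decomposition in $y$, the dilation identity reducing each fibre to $H_{\gamma,1}$, the sharp Agmon decay of the ground state to rule out small times in part $(i)$, and, for part $(ii)$, the fibre-wise spectral inequality from Theorem~\ref{thm:main} fed into the quantitative Lebeau--Robbiano scheme combined with the $\abs{n}$-linear spectral gap to render the observability constant uniform in $n$. The only cosmetic differences are that you phrase the Agmon decay via the exact Agmon distance $\rho_{\mathrm{Ag}}$ rather than the estimate of Remark~\ref{rk:agmon}\,(2), and you split the time as $T_0 + T_1$ instead of $T/2 + T/2$; both variants lead to the same conclusions.
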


\begin{rk}\label{rk:cridiss}
	(1)
	Recall from \cite[Theorem 4.12]{K} that when $d=1$ and $\gamma = s = 1$, the equation (\hyperref[eq:bagrushin]{$E_{1,1}$}) is never
	exactly null-controllable from any control support of the form $\RR \times \omega$ where $\omega = \TT \setminus [a,b]$. Therefore,
	one does not expect positive null-controllability results to hold for the equation \eqref{eq:bagrushin}
	in the regime $s = (1+\gamma)/2$ from more general control supports $\omega \subset \RR^d \times \TT^d$ satisfying the
	condition \eqref{eq:thickRT}.
	
	(2)
	Part~($i$) of the above statement is consistent with known results from the literature for the particular case $\gamma = 1$.
	Indeed, the time $T_*$ then reduces to
	\[
		T_*
		=
		\frac{\dist(0,\omega)^2}{2d}
		,
	\]
	and therefore takes the very same form as the (minimal) times appearing in the study of Grushin-type models, see, e.g.,
	\cite[Theorem 1.1]{ABM}, \cite[Theorem 1]{BCG}, \cite[Theorem 1.3]{BMM}, or \cite[Theorem 1.1]{BDE}.

\end{rk}

Our last result considers control supports $\omega \subset \RR^d \times \TT^d$ avoiding the degeneracy line $\{x=0\}$ in the
weak dissipation regime.

\begin{thm}\label{thm:lowdissba}
	Whenever $0 < s < (\gamma+1)/2$, the equation \eqref{eq:bagrushin} is never exactly null-controllable from any
	control support $\omega \subset \RR^d \times \TT^d$ satisfying $\overline{\omega} \cap \{x=0\} = \emptyset$.
\end{thm}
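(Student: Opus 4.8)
The plan is to exploit the dilation structure of the Baouendi-Grushin operator together with the Fourier decomposition in the $y$-variable to reduce the non-controllability to a concrete obstruction at the level of a single Fourier mode, in the spirit of the arguments already used to derive Proposition~\ref{prop:neccond} and Theorem~\ref{thm:critdiss}\,$(i)$. First I would write $\Delta_\gamma = \Delta_x + |x|^{2\gamma}\Delta_y$ and, using separation of variables, decompose $L^2(\RR^d\times\TT^d)$ along the Fourier basis $(e^{in\cdot y})_{n\in\ZZ^d}$, so that on each mode $n$ the operator acts as the anisotropic Shubin-type operator $-\Delta_x + |n|^2|x|^{2\gamma}$ on $L^2(\RR^d)$. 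After the rescaling $x \mapsto |n|^{-1/(\gamma+1)} x$ this operator becomes $|n|^{2/(\gamma+1)} H_{\gamma,1}$, whose spectrum is $|n|^{2/(\gamma+1)}\spec(H_{\gamma,1})$; in particular its smallest eigenvalue is $\lambda_\gamma |n|^{2/(\gamma+1)}$ with ground state a rescaled, exponentially localized function $\varphi_n(x) = |n|^{d/2(\gamma+1)}\varphi(|n|^{1/(\gamma+1)}x)$.

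Next I would test non-controllability through the dual observability inequality: the equation \eqref{eq:bagrushin} is exactly null-controllable from $\omega$ in time $T$ if and only if there is $C>0$ with $\|e^{-T(-\Delta_\gamma)^s}u_0\|_{L^2}^2 \le C\int_0^T \|e^{-t(-\Delta_\gamma)^s}u_0\|_{L^2(\omega)}^2\,dt$ for all $u_0$. I would feed in the quasimode family $u_{0,n}(x,y) = \varphi_n(x)e^{in\cdot y}$, which is an eigenfunction of $(-\Delta_\gamma)^s$ with eigenvalue $(\lambda_\gamma|n|^{2/(\gamma+1)})^s = \lambda_\gamma^s |n|^{2s/(\gamma+1)}$. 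The left-hand side is then $e^{-2T\lambda_\gamma^s |n|^{2s/(\gamma+1)}}$, while on the right-hand side the key point is that, since $\overline\omega \cap \{x=0\} = \emptyset$, there is $\delta_0 := \dist(0,\omega) > 0$, so $\omega \subset \{|x| \ge \delta_0\}$ and hence, by the Agmon-type exponential decay of $\varphi$ (its ground state satisfies $|\varphi(x)| \lesssim e^{-c|x|^{\gamma+1}}$ for the appropriate constant $c$ tied to $\lambda_\gamma$), one gets $\|\varphi_n\|_{L^2(\omega)}^2 \le \|\varphi_n\|_{L^2(\{|x|\ge\delta_0\})}^2 \lesssim e^{-c'\delta_0^{\gamma+1}|n|}$ for some $c'>0$ independent of $n$. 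Plugging this in, the observability inequality would force
\[
	e^{-2T\lambda_\gamma^s |n|^{2s/(\gamma+1)}}
	\le
	C\,T\, e^{-c'\delta_0^{\gamma+1}|n|}
	,\quad
	n\in\ZZ^d
	.
\]

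Finally I would derive the contradiction by comparing exponents: since $0 < s < (\gamma+1)/2$, we have $2s/(\gamma+1) < 1$, so the left-hand exponent $2T\lambda_\gamma^s|n|^{2s/(\gamma+1)}$ grows strictly slower than the right-hand exponent $c'\delta_0^{\gamma+1}|n|$ as $|n|\to\infty$, making the inequality fail for large $|n|$; this holds for \emph{every} $T>0$, giving the claimed non-controllability in all positive times. The main obstacle I expect is making the exponential localization estimate for $\|\varphi_n\|_{L^2(\{|x|\ge\delta_0\})}$ fully quantitative and uniform in $n$ — this requires an Agmon estimate for the rescaled ground state $\varphi$ of $H_{\gamma,1}$ (decay like $e^{-c|x|^{\gamma+1}}$), which should follow from standard Agmon/Lithner arguments for Schrödinger operators with potential $|x|^{2\gamma}$, analogous to the Agmon estimates for spectral subspaces invoked earlier in Section~\ref{ssec:Agmon}; one must check that after the dilation $x\mapsto |n|^{1/(\gamma+1)}x$ the decay in the original variable is exactly of order $e^{-c\,|n|\,|x|^{\gamma+1}}$, so that on the fixed region $\{|x|\ge\delta_0\}$ the mass is bounded by $e^{-c\delta_0^{\gamma+1}|n|}$ with a constant uniform in $n$. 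A secondary technical point is handling $n=0$ separately (it contributes the non-degenerate direction $-\Delta_x$ and plays no role in the limiting argument), and ensuring the finitely many small-$|n|$ modes do not affect the asymptotic contradiction.
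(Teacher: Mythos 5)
Your proposal is correct and follows essentially the same route as the paper's proof: Fourier decomposition in the $\TT^d$-variable, rescaled ground states of the anharmonic oscillator $H_{\gamma,1}$ as test functions in the dual observability inequality, Agmon decay to bound the localized norm on $\omega \subset \{\abs{x}\geq\delta_0\}$, and a comparison of the exponents $\abs{n}^{2s/(\gamma+1)}$ versus $\abs{n}$ as $\abs{n}\to\infty$. The one technical point you flag — uniformity in $n$ of the exponential localization — is indeed addressed in the paper via an $L^2$-Agmon estimate $\norm{\euler^{\varepsilon\abs{x}^{1+\gamma}/(1+\gamma)}\psi_\gamma}_{L^2}\leq c_{\varepsilon,\gamma}$ (see Remark~\ref{rk:agmon}\,(2)), which suffices in place of the pointwise bound you posit and gives exactly the $\euler^{-c'\delta_0^{\gamma+1}\abs{n}}$ decay after the dilation.
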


Let us finish this section with an example.

\begin{ex}\label{ex:example}
	For some fixed length $L>0$, we consider the control support
	\[
		\omega_L
		=
		B(0,L)^c \times \TT^d \subset \RR^d \times \TT^d,
	\]
	and the associated control time
	\[
		T_{\gamma,s,L}
		=
		\inf\big\{ T>0 \colon \text{\eqref{eq:bagrushin} is exactly null-controllable from $\omega_L$ at time $T$} \big\}
		.
	\]
	It is easy to see that for all $\varepsilon > 0$ the set $B(0,L)^c$ is $(\gamma_{\varepsilon},L_{\varepsilon})$-thick in
	$\RR^d$ with
	\[
		L_{\varepsilon}
		=
		L + \varepsilon
		\quad\text{ and }\quad
		\gamma_{\varepsilon}
		=
		1 - \frac{L^d}{(L+\varepsilon)^d}
		.
	\]
	Since the control support $\omega_L$ also satisfies the geometric condition $\overline{\omega}_L \cap \{x=0\} = \emptyset$, it
	follows from Theorems~\ref{thm:bagrushincont},~\ref{thm:critdiss}, and~\ref{thm:lowdissba} that
	\begin{equation}\label{eq:threeregimes}
		\begin{cases}
			T_{\gamma,s,L} = 0 & \text{when } s > (1+\gamma)/2, \\[5pt]
			0<T_{\gamma,s,L} < +\infty & \text{when } s = (1+\gamma)/2, \\[5pt]
			T_{\gamma,s,L} = +\infty & \text{when } s < (1+\gamma)/2.
		\end{cases}
	\end{equation}
	In the critical dissipation regime $s = (1+\gamma)/2$, we actually have from Theorem~\ref{thm:critdiss} for all
	$\varepsilon > 0$ the more precise two-sided estimate
	\begin{equation}\label{eq:inminitime}
		\frac1{1+\gamma}\bigg(\frac L{\sqrt{\lambda_{\gamma}}}\bigg)^{1+\gamma}
		\leq
		T_{\gamma,(1+\gamma)/2,L}
		\leq
		c_{\gamma}\,\biggl( \frac{L+\varepsilon}{\sqrt{\lambda_{\gamma}}} \biggr)^{1+\gamma}\,
			\log\biggl( \frac{c_{\gamma}(L+\varepsilon)^d}{(L+\varepsilon)^d-L^d} \biggr)
		,
	\end{equation}
	where $\lambda_{\gamma} > 0$ denotes again the smallest eigenvalue of the anharmonic oscillator $H_{\gamma,1}$. Moreover,
	as stated in Corollary~\ref{cor:asymptotics} below, $\lambda_{\gamma}$ converges to $\lambda_D$ as $\gamma$ goes to
	$+\infty$, where $\lambda_D>0$ stands for the smallest eigenvalue of the Dirichlet Laplacian on the canonical Euclidean unit
	ball $B(0,1)$ in $\RR^d$ (this is a quite straightforward consequence of the theory of large coupling limit).
	Since then
	\[
		\frac{L}{\sqrt{\lambda_\gamma}}
		\to
		\frac{L}{\sqrt{\lambda_D}}
		,
	\]
	we immediately infer that
	\begin{equation}\label{eq:minitime}
		\frac1{1+\gamma}\bigg(\frac L{\sqrt{\lambda_{\gamma}}}\bigg)^{1+\gamma}
		\to
		\begin{cases}
			+\infty & \text{when } L > \sqrt{\lambda_D},\\[5pt]
			0 & \text{when } L < \sqrt{\lambda_D}.
		\end{cases}
	\end{equation}
	Together with \eqref{eq:inminitime}, the latter implies, in particular, that, as $\gamma\rightarrow+\infty$,
	\[
		T_{\gamma,(1+\gamma)/2,L}\rightarrow +\infty\quad \text{when $L>\sqrt{\lambda_D}$}.
	\]
	Moreover, further calculations suggest that the first instance of the constant $c_\gamma$ in \eqref{eq:inminitime} can be
	replaced by $c^\gamma$ with some constant $c > 1$ that does not depend on the dimension, and that the second instance can be
	replaced by a constant not depending on $\gamma$. As a consequence, we have $T_{\gamma,(1+\gamma)/2,L} \to 0$ as
	$\gamma \to +\infty$ for $L < \sqrt{\lambda_D}/c$; the regime $\sqrt{\lambda_D}/c \leq L \leq \sqrt{\lambda_D}$ is still unclear
	at the moment. In any case, since $\sqrt{\lambda_D}$ approaches $+\infty$ as the dimension $d$ goes to $+\infty$, the asymptotic
	behaviour of $T_{\gamma,(1+\gamma)/2,L}$ depending on $L$ in this fashion, and not, as one might expect a priori, on $L > 1$ and
	$L < 1$, respectively, is quite surprising. Moreover, as mentioned in Remark~\ref{rk:cridiss}\,$(ii)$, the quantity
	\eqref{eq:minitime} is consistent with minimal times appearing in the study of Grushin-type models. Motivated by this, we
	conjecture that the lower bound in \eqref{eq:inminitime} is actually an equality, that is,
	\[
		T_{\gamma,(1+\gamma)/2,L}
		=
		\frac1{1+\gamma}\biggl( \frac L{\sqrt{\lambda_{\gamma}}} \biggr)^{1+\gamma}
		.
	\]
	The relevant regimes of $L$ for the asymptotic behaviour of $T_{\gamma,(1+\gamma)/2,L}$ as $\gamma \to +\infty$ would then be
	$L > \sqrt{\lambda_D}$ and $L < \sqrt{\lambda_D}$, that is,
	\[
		T_{\gamma,(1+\gamma)/2,L}
		\to
		\begin{cases}
			+\infty & \text{when } L > \sqrt{\lambda_D},\\[5pt]
			0 & \text{when } L < \sqrt{\lambda_D}.
		\end{cases}
	\]
\end{ex}

\begin{rk}
	Incidentally, as explained in Remark~\ref{rk:schrogrushin} below, the analogous proof as the one for
	Theorem~\ref{thm:critdiss}\,$(ii)$ yields that the fractional Schr\"odinger-Baouendi-Grushin equation
	\begin{equation}\label{eq:sbagrushin}\tag{$SE_{\gamma,s}$}
		\begin{cases}
			i\partial_tf(t,x,y) + (- \Delta_{\gamma})^sf(t,x,y) = h(t,x,y)\bmone_{\omega}(x,y),\ t\in\RR, (x,y)\in\RR^d\times\TT^d, \\[5pt]
			f(0,\cdot,\cdot) = f_0\in L^2(\RR^d\times\mathbb T^d),
		\end{cases}
	\end{equation}
	which is the oscillatory counterpart of the equation \eqref{eq:bagrushin}, is never exactly null-controllable from any control
	support $\omega \subset \RR^d \times \TT^d$ satisfying the condition $\overline{\omega} \cap \{x=0\} = \emptyset$. This
	difference in behavior between the equations \eqref{eq:bagrushin} and \eqref{eq:sbagrushin} contrasts with what is known for the
	heat and the corresponding Schr\"odinger equation, see e.g. \cite[Section 2.2]{MPS21}.
\end{rk}

\begin{rk}\label{rk:refgrushin}
	The results presented in this subsection are in line with articles devoted to the study of the
	null-controllability of Grushin-type heat equations. A pioneering article in this theory is \cite{BCG}, which paved
	the way for a numerous series of articles of which we can cite \cite{ABM, BDE, BMM, DKR, DK, Ko}. All these works illustrate the
	fact that the null-controllability of Grushin-type heat equations is governed by minimal times as in
	Example~\ref{ex:example}, and some of these works are even devoted to the computation of these times. Let us also mention
	that the null-controllability of the Schrödinger-Grushin equation is studied in the papers \cite{BS,LS}.
\end{rk}

\section{Spectral inequalities for the anisotropic Shubin operators}\label{sec:spectralEst}

The objective of this section is to prove Theorems~\ref{thm:mainharmo} and~\ref{thm:main}. To this end, we mainly focus on
proving the latter result and then explain briefly how its proof can be adapted in order to obtain the stronger spectral
inequality for the harmonic oscillator in Theorem~\ref{thm:mainharmo}.

\subsection{An abstract uncertainty relation}\label{sec:abstract}
Let us begin with recalling from \cite{EgidiS-21} the abstract result that plays an essential role in obtaining our spectral
inequalities. In order to give its statement, we need to introduce the following definition: given a domain
$\Omega \subset \RR^d$, a constant $\kappa \geq 1$, and a length $l > 0$, we call a finite or countably infinite family
$\{ Q_j \}_j$ of non-empty bounded convex open subsets $Q_j \subset \Omega$ a \emph{$(\kappa,l)$-covering of $\Omega$} if
\begin{enumerate}
    \item[$(i)$]
    the set $\Omega \setminus \bigcup_j Q_j$ has Lebesgue measure zero;

    \item[$(ii)$]
    each $Q_j$ is contained in a hypercube with sides of length $l$ parallel to coordinate axes;

    \item[$(iii)$]
    the estimate $\sum_j \norm{ g }_{L^2(Q_j)}^2 \leq \kappa \norm{ g }_{L^2(\Omega)}^2$ holds for all $g \in L^2(\Omega)$.
\end{enumerate}

We now have the following particular case of an uncertainty relation from \cite{EgidiS-21}.

\begin{prop}[{\cite[Proposition~3.1]{EgidiS-21}}]\label{prop:ES}
	Let $\{ Q_j \}_j$ be a $(\kappa,l)$-covering of a given domain $\Omega \subset \RR^d$, and suppose that
	$f \in \bigcap_{n\in\NN} W^{n,2}(\Omega)$ satisfies
	\[
		\forall n\in\mathbb N,\quad\sum_{\abs{\alpha}=n} \frac{1}{\alpha!} \norm{ \partial_x^\alpha f }_{L^2(\Omega)}^2
		\leq
		\frac{C_B(n)}{n!} \norm{ f }_{L^2(\Omega)}^2
		,
	\]
	with constants $C_B(n) > 0$ such that
	\[
		h
		:=
		\sum_{n\in\NN} \sqrt{C_B(n)}\,\frac{(10dl)^n}{n!}
		<
		\infty
		.
	\]
	Then, for every measurable subset $\omega \subset \Omega$ satisfying
	$\tau := \inf_j \abs{ Q_j \cap \omega } / \mathrm{diam}(Q_j)^d > 0$, we have
	\[
		\norm{ f }_{L^2(\Omega)}^2
		\leq
		\frac{\kappa}{6} \bigg( \frac{24d\abs{B(0,1)}}{\tau} \bigg)^{2\frac{\log\kappa}{\log2}+4\frac{\log h}{\log2}+5}
			\norm{ f }_{L^2(\omega)}^2
		.
	\]
\end{prop}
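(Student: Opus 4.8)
\emph{Overall strategy.} I would follow Kovrijkine's method: reduce the global estimate to a single‑cube analytic‑function inequality, applied after splitting the $(\kappa,l)$‑covering $\{Q_j\}_j$ into ``good'' and ``bad'' cubes according to the Bernstein‑type hypothesis on $f$. The good cubes carry most of the $L^2$‑mass of $f$ and are precisely the ones on which $f$ has a controlled holomorphic extension, so that a Remez/Bernstein--Walsh estimate is available there.

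\emph{Good and bad cubes.} For each $j$ set
\[
	S_j
	:=
	\sum_{n\in\NN}\frac{(10dl)^n}{n!}\Big(\sum_{\abs\alpha=n}\tfrac1{\alpha!}\norm{\partial_x^\alpha f}_{L^2(Q_j)}^2\Big)^{1/2}
	,
\]
which dominates $\norm{f}_{L^2(Q_j)}$ via its $n=0$ term. Applying Minkowski's inequality in the index $j$, then property $(iii)$ of the covering to each $\partial_x^\alpha f$, and finally the assumed bound on $C_B(n)$ (using $(n!)^{3/2}\ge n!$), one obtains $\sum_j S_j^2\le\kappa h^2\norm{f}_{L^2(\Omega)}^2$. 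Call $Q_j$ \emph{bad} if $\norm{f}_{L^2(Q_j)}^2<S_j^2/(2\kappa h^2)$ and \emph{good} otherwise. Then $\sum_{j\text{ bad}}\norm{f}_{L^2(Q_j)}^2\le\tfrac12\norm{f}_{L^2(\Omega)}^2$, while $\sum_j\norm{f}_{L^2(Q_j)}^2\ge\norm{f}_{L^2(\Omega)}^2$ by $(i)$; hence the good cubes satisfy $\sum_{j\text{ good}}\norm{f}_{L^2(Q_j)}^2\ge\tfrac12\norm{f}_{L^2(\Omega)}^2$, and each of them obeys $S_j\le\sqrt{2\kappa}\,h\,\norm{f}_{L^2(Q_j)}$.

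\emph{Single-cube estimate (the main obstacle).} Fix a good cube $Q_j$, contained in a hypercube of side $l$ by $(ii)$. Via Sobolev embedding, the inequality $S_j\le\sqrt{2\kappa}\,h\,\norm{f}_{L^2(Q_j)}$ controls the Taylor coefficients of $f$ at a fixed point of $Q_j$; the numerical factor $10dl$ is tuned precisely so that convergence of the series defining $h$ provides a holomorphic extension $F$ of $f$ to a complex neighbourhood of $Q_j$ with $\norm{F}_{L^\infty}\le M_j\norm{f}_{L^2(Q_j)}$ and $\log M_j\lesssim\log(\sqrt{2\kappa}\,h)$. A Remez/Bernstein--Walsh inequality for $F$, carried out coordinate by coordinate as in Kovrijkine's argument, on the set $Q_j\cap\omega$ whose relative density satisfies $\abs{Q_j\cap\omega}/\diam(Q_j)^d\ge\tau$, then gives
\[
	\norm{f}_{L^2(Q_j)}^2
	\le
	\Big(\tfrac{24d\abs{B(0,1)}}{\tau}\Big)^{2\frac{\log\kappa}{\log2}+4\frac{\log h}{\log2}+4}\norm{f}_{L^2(Q_j\cap\omega)}^2
	.
\]
Keeping the constants explicit through the holomorphic extension and through the Remez step — so that $\log M_j$ produces exactly the indicated exponent and the base comes out as $24d\abs{B(0,1)}$ — is the technical heart; everything else is bookkeeping.

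\emph{Summation.} Summing the single-cube bound over the good cubes, using that they carry at least half the mass, and then property $(iii)$ applied to $f\bmone_\omega$, one gets
\[
	\norm{f}_{L^2(\Omega)}^2
	\le
	2\kappa\,\Big(\tfrac{24d\abs{B(0,1)}}{\tau}\Big)^{2\frac{\log\kappa}{\log2}+4\frac{\log h}{\log2}+4}\norm{f}_{L^2(\omega)}^2
	.
\]
Since the isodiametric inequality forces $\tau\le\abs{B(0,1)}$, hence $24d\abs{B(0,1)}/\tau\ge24\ge12$, the factor $2\kappa=12\cdot\tfrac\kappa6$ is absorbed by raising the exponent by one unit, which yields exactly the asserted estimate with prefactor $\kappa/6$ and exponent $2\frac{\log\kappa}{\log2}+4\frac{\log h}{\log2}+5$.
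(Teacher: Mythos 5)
The paper does not prove Proposition~\ref{prop:ES}; it imports it verbatim from \cite[Proposition~3.1]{EgidiS-21}, so there is no in-paper proof to compare against. Your outline does follow the Kovrijkine--Egidi--Seelmann strategy that the cited source uses, and the outer layers are sound: the $\ell^2_j$-Minkowski plus covering-property argument giving $\sum_j S_j^2\leq\kappa h^2\norm{f}_{L^2(\Omega)}^2$ is correct (the $\sqrt{n!}$ you discard is indeed $\geq1$), the good/bad cube dichotomy and the mass count $\sum_{\text{good}}\norm{f}_{L^2(Q_j)}^2\geq\tfrac12\norm{f}_{L^2(\Omega)}^2$ are correct, and the final summation step using property~$(iii)$ applied to $f\bmone_\omega$ together with the absorption of $2\kappa$ via $24d\abs{B(0,1)}/\tau\geq12$ does yield the stated prefactor and exponent.

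The genuine gap is the single-cube estimate, which you yourself flag as ``the technical heart.'' You assert, but do not derive, the bound $\norm{f}_{L^2(Q_j)}^2\leq(24d\abs{B(0,1)}/\tau)^{2\frac{\log\kappa}{\log2}+4\frac{\log h}{\log2}+4}\norm{f}_{L^2(Q_j\cap\omega)}^2$ on a good cube. The phrase ``via Sobolev embedding, ... controls the Taylor coefficients at a fixed point'' papers over two distinct non-trivial steps: (a) the inequality $S_j\leq\sqrt{2\kappa}\,h\,\norm{f}_{L^2(Q_j)}$ gives $L^2(Q_j)$ control of $\partial^\alpha f$, but to get pointwise Taylor coefficients one must select a \emph{good point} $x_0\in Q_j$ by an averaging/Markov argument and verify that both $\abs{f(x_0)}$ and the coefficients are simultaneously controlled — this is not a Sobolev embedding but Kovrijkine's good-point lemma; and (b) the Remez/Bernstein--Walsh step along a well-chosen segment (with the ensuing integration over the transverse directions and the relative-density argument using $\tau$ measured against $\diam(Q_j)^d$, not $\abs{Q_j}$) has to be tracked precisely so that the base comes out as $24d\abs{B(0,1)}$ and the coefficient of $\log h$ in the exponent is exactly $4/\log2$. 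The choice $10dl$ in the definition of $h$ is tuned to make (a) and (b) mesh with explicit numeric constants, and none of that tuning is exhibited. Until that single-cube lemma is proved with the stated constants, the argument is an outline rather than a proof.
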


In view of Proposition \ref{prop:ES}, we therefore need in the following to prove so-called Bernstein inequalities of the form
\begin{equation}\label{eq:Bernstein}
	\sum_{\vert\alpha\vert=n} \frac1{\alpha!} \norm{\partial_x^\alpha f}_{L^2(\Omega)}^2
	\leq
	\frac{C_B(n,\lambda)}{n!} \norm{f}_{L^2(\Omega)}^2
	,\quad
	n\in\mathbb N,\,f\in\cE_{\lambda,k,m}
	,
\end{equation}
with a properly chosen domain $\Omega \subset \RR^d$. In order to alleviate the writing, we use throughout this section the
abbreviations
\[
	\mu
	:=
	\frac{k}{k+m}
	,\quad
	\nu
	:=
	\frac{m}{k+m}
	,\quad
	\scrit
	:=
	\frac1{2k}+\frac1{2m}
	.
\]

\subsection{Agmon estimates for spectral subspaces}\label{ssec:Agmon}
A key ingredient in obtaining inequalities of the form \eqref{eq:Bernstein} is given by the following variant of Agmon estimates
from \cite{Alp20b} for spectral subspaces associated with the (anisotropic) Shubin operators $H_{k,m}$.

\begin{prop}\label{prop:Agmon}
	There exist positive constants $c_1,c_2,c_3>0$ and $t_0 \in (0,1]$, depending only on $k$, $m$, and the dimension $d$, such that
	for all $t \in [0,t_0)$, $\lambda \geq 0$, and $f \in \cE_{\lambda,k,m}$ we have
	\begin{equation}\label{eq:agmon}
		\norm{ \euler^{c_1t\sprod{x}^{1/\nu}}f }^2_{L^2(\RR^d)} + \norm{ \euler^{c_1t\sprod{D_x}^{1/\mu}}f }^2_{L^2(\RR^d)}
		\leq
		c_2\lambda^{d\scrit}\euler^{c_3t\lambda^{\scrit}} \norm{f}^2_{L^2(\RR^d)}
		.
	\end{equation}
\end{prop}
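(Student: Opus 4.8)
The plan is to obtain the two weighted estimates in \eqref{eq:agmon} separately, exploiting the symmetry of the Shubin operator under the (partial) Fourier transform. Indeed, conjugating $H_{k,m} = (-\Delta)^m + \abs{x}^{2k}$ by the Fourier transform interchanges the roles of $x$ and $D_x$ and turns $H_{k,m}$ into $H_{m,k} = (-\Delta)^k + \abs{x}^{2m}$, which is again an anisotropic Shubin operator with the parameters $k$ and $m$ swapped; under this swap the exponents transform as $1/\nu \leftrightarrow 1/\mu$ while $\scrit$ is invariant. Hence it suffices to prove the bound for the single term $\norm{\euler^{c_1 t \sprod{x}^{1/\nu}} f}^2_{L^2(\RR^d)}$, and then apply the same estimate to $\widehat{f} \in \cE_{\lambda,m,k}$ to get the companion term involving $\sprod{D_x}^{1/\mu}$; adding the two (and adjusting constants $c_1,c_2,c_3,t_0$) yields \eqref{eq:agmon}.

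For the spatial weight, the first step is to invoke the Agmon-type estimates from \cite{Alp20b} \emph{at the level of individual eigenfunctions}: if $H_{k,m}\varphi = E\varphi$ with $\norm{\varphi}_{L^2} = 1$, then there are constants $c_1, c_1' > 0$ (depending only on $k,m,d$) such that $\norm{\euler^{c_1 \sprod{x}^{1/\nu}}\varphi}_{L^2(\RR^d)} \leq c_1' E^{d\scrit/2}$, or more precisely an estimate of the form $\euler^{c_1' E^{\scrit}}$ times a polynomial factor — this is the standard subelliptic Agmon bound reflecting that the classically forbidden region is $\{\abs{x}^{2k} > E\}$, i.e.\ $\abs{x} \gtrsim E^{1/2k}$, and the Agmon distance to this region grows like $\sprod{x}^{1+k/m \cdot(\ldots)}$, producing the exponent $1/\nu = (k+m)/m$. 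The second step is to pass from single eigenfunctions to finite linear combinations $f = \sum_{E_j \leq \lambda} c_j \varphi_j \in \cE_{\lambda,k,m}$. Here one cannot simply use the triangle inequality because the weighted norms are not orthogonal; instead the standard device is to introduce the auxiliary nonnegative selfadjoint operator $B_t := \euler^{c_1 t \sprod{x}^{1/\nu}}$ and estimate $\norm{B_t f}^2 = \sprod{B_t^2 f, f}$, using that on the spectral subspace $\cE_{\lambda,k,m}$ one has the operator inequality $\bmone_{(-\infty,\lambda]}(H_{k,m}) B_t^2 \bmone_{(-\infty,\lambda]}(H_{k,m}) \leq c_2 \lambda^{d\scrit} \euler^{c_3 t \lambda^{\scrit}} \bmone_{(-\infty,\lambda]}(H_{k,m})$. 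This operator inequality should follow either from a resolvent/commutator computation à la Agmon — controlling $\Reelle\sprod{(H_{k,m} - E) g, \euler^{2\phi_t} g}$ for the real weight $\phi_t = c_1 t \sprod{x}^{1/\nu}$ with $E \leq \lambda$, absorbing the gradient terms coming from $\abs{\nabla \phi_t}^2 \lesssim t^2 \sprod{x}^{2/\nu - 2}$ into the kinetic part $(-\Delta)^m$ (this is where $t_0$ small is needed, to make the absorption work) — or, alternatively, by writing $B_t$ as a superposition/limit and using the pointwise eigenfunction bounds together with Schur-test type estimates on the kernel of $\bmone_{(-\infty,\lambda]}(H_{k,m})$.

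The main obstacle is precisely this last passage from eigenfunctions to spectral subspaces: the naive approach via eigenfunction bounds and Cauchy–Schwarz loses a factor equal to the number of eigenvalues below $\lambda$ (i.e.\ the counting function, which grows polynomially in $\lambda$ — this is, I expect, exactly the source of the unfavourable $\log(1+\lambda)$ term flagged in Remark~\ref{rk:badterm} and Remark~\ref{rk:agmon}). The clean route is the commutator/quadratic-form argument directly on $\cE_{\lambda,k,m}$: one tests the quadratic form of $H_{k,m}$ against $\euler^{2\phi_t} f$, uses $f \in \cE_{\lambda,k,m}$ to bound $\sprod{H_{k,m} f, f} \leq \lambda \norm{f}^2$ after the weighted IMS localization, and carefully tracks how the weight interacts with the order-$2m$ operator $(-\Delta)^m$ (for $m \geq 2$ this requires iterated commutators and a somewhat delicate combinatorial bookkeeping, but each commutator with $\euler^{2\phi_t}$ only costs a factor of $t$ times lower-order derivatives, so for $t < t_0$ everything closes). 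The polynomial prefactor $\lambda^{d\scrit}$ and the exponential rate $c_3 t \lambda^{\scrit}$ then emerge naturally: $\scrit = \frac1{2k} + \frac1{2m}$ is the scaling exponent matching $\sprod{x}^{1/\nu}$ on the region $\abs{x} \sim \lambda^{1/2k}$ where the eigenfunctions live, and $d\scrit$ is the dimensional factor from the volume of that region. I would organize the write-up so that the real-weight spatial estimate is stated as a lemma, proved by the commutator argument, and then the Fourier-conjugation symmetry is invoked in one line to finish.
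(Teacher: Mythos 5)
You correctly identify the two key ingredients: the eigenfunction-level Agmon bounds from \cite{Alp20b} and the symmetry of $H_{k,m}$ under the Fourier transform. However, you then classify the passage from eigenfunctions to spectral subspaces via ``triangle inequality $+$ Cauchy--Schwarz'' as a naive route to be avoided, and commit instead to a direct commutator/quadratic-form argument on $\cE_{\lambda,k,m}$. This is the reverse of what the paper does. The paper uses exactly the naive route: expand $f \in \cE_{\lambda,k,m}$ over eigenfunctions, apply the eigenfunction estimate from \cite[Theorem~2.1]{Alp20b} (which reads $\tilde c\,\euler^{\tilde c t\lambda^\scrit}$ with a \emph{uniform} constant, not the polynomial-in-$E$ factor you write down), pay the factor $N(\lambda)$ equal to the number of eigenvalues below $\lambda$ via Cauchy--Schwarz, and then bound $N(\lambda) \leq c'\lambda^{d\scrit}$ by the Weyl law from \cite[Remark~5.7]{ChatzakouDR-21}. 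That Weyl-law bound is precisely the polynomial prefactor $\lambda^{d\scrit}$ appearing in \eqref{eq:agmon}; the stated proposition already accommodates this loss, so there is nothing to dodge.

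The commutator argument you sketch would, if it closed, give the sharper bound \eqref{eq:sharpagmon} without the polynomial prefactor. That is exactly the conjecture in Remark~\ref{rk:agmon}\,(1), which the paper explicitly says is open for general $H_{k,m}$ (the one known exception being the harmonic oscillator, \cite[Proposition~3.3]{BJKPS}). You do not actually carry the argument out --- the iterated commutators of $\euler^{2\phi_t}$ with $(-\Delta)^m$ for $m \geq 2$ are where the real difficulty lies, and no cited reference closes them --- and your remark that ``the polynomial prefactor $\lambda^{d\scrit}\ldots$ emerge[s] naturally'' from this route is inconsistent with the reason for choosing it: the entire point of the direct argument would be to \emph{avoid} that prefactor. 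As written, your proposal abandons the route that works and leaves a genuine gap at the very step you identify as the main obstacle.
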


\begin{proof}
	We know from \cite[Theorem 2.1]{Alp20b} that there exist some positive constants $c_1,\tilde{c} > 0$, and $t_0 \in (0,1]$ such
	that for every normalized eigenfunction $\psi \in L^2(\RR^d)$ of the operator $H_{k,m}$ and all $t\in[0,t_0)$ we
	have
	\[
		\norm{ \euler^{c_1t\sprod{x}^{1/\nu}}\psi }_{L^2(\RR^d)} + \norm{ \euler^{c_1t\sprod{D_x}^{1/\mu}}\psi }_{L^2(\RR^d)}
		\leq
		\tilde{c}\euler^{\tilde{c}t\lambda^\scrit}
		,
	\]
	where $\lambda > 0$ is the eigenvalue associated with the eigenfunction $\psi$; recall that $H_{k,m}$ has purely discrete
	spectrum. Expanding $f \in \cE_{\lambda,k,m}$ for $\lambda \geq 0$ as a linear combination of eigenfunctions, we therefore
	deduce that for all $t\in[0,t_0)$ we have
	\[
		\norm{ \euler^{c_1t\sprod{x}^{1/\nu}}f }_{L^2(\RR^d)}^2 + \norm{ \euler^{c_1t\sprod{D_x}^{1/\mu}}f }_{L^2(\RR^d)}^2
		\leq
		N(\lambda)\tilde{c}^2\euler^{2\tilde{c}t\lambda^\scrit}\norm{f}_{L^2(\RR^d)}^2
		,
	\]
	where $N(\lambda)$ is chosen as the number of distinct eigenvalues of $H_{k,m}$ less or equal to $\lambda$. Using the Weyl
	law asymptotics from \cite[Remark~5.7]{ChatzakouDR-21} for the eigenvalue counting function associated to $H_{k,m}$, cf.\ also
	\cite[Theorem~2.3.2]{BoggiattoBR-96}, we then observe that
	\[
		N(\lambda)
		\leq
		c'\lambda^{d\scrit},
	\]
	with some constant $c' > 0$ depending only on $k$, $m$, and $d$. The proof is then ended upon choosing $c_2 = \tilde{c}^2c'$ and
	$c_3 = 2\tilde{c}$.
\end{proof}

\begin{rk}\label{rk:agmon}
	(1)
	The term $\lambda^{d\scrit}$ on the right-hand side of \eqref{eq:agmon} is unexpected, and we indeed conjecture that
	\eqref{eq:agmon} holds without this term, that is,
	\begin{equation}\label{eq:sharpagmon}
		\norm{ \euler^{c_1t\sprod{x}^{1/\nu}}f }^2_{L^2(\RR^d)} + \norm{ \euler^{c_1t\sprod{D_x}^{1/\mu}}f }^2_{L^2(\RR^d)}
		\leq
		c_2\euler^{c_3t\lambda^{\scrit}} \norm{f}^2_{L^2(\RR^d)}
		.		
	\end{equation}	
	The reason the term $\lambda^{d\scrit}$ appears in \eqref{eq:agmon} lies in the way we carry quantitative sharp Agmon
	estimates for single eigenfunctions of the operator $H_{m,k}$ over to finite linear combinations of eigenfunctions. To the best
	of our knowledge, there are very few results in the literature stating Agmon estimates for spectral subspaces which are sharp
	with respect to possible parameters involved ($t\in [0,t_0)$ in this case for us), the rare exception being the case of the
	harmonic oscillator, see \cite[Proposition~3.3]{BJKPS}. Proving the stronger estimates \eqref{eq:sharpagmon} would immediately
	allow us to remove the unfavorable term $\log(1+\lambda)$ in the spectral inequalities \eqref{eq:specIneq}.

	(2)
	In the particular case of $m = 1$, one may take $c_1 = \nu = 1 / (k+1)$ and $t_0 = 1$ in Proposition~\ref{prop:Agmon}. This
	follows from the above reasoning by simply replacing the Agmon estimates for single eigenfunctions from \cite[Theorem~2.1]{Alp20b}
	by more explicit ones for $m = 1$ with the mentioned values of $c_1$ and $t_0$, which can be obtained, for instance,
	by suitably adapting the proof in [2]. These more precise Agmon estimates are also consistent with classical ones from the
	literature, see, e.g., \cite[Theorem 3.4]{Hislop}.

\end{rk}

\subsection{Bernstein inequalities}\label{ssec:Bernstein}
Proposition~\ref{prop:Agmon} now allows us to prove a global Bernstein inequality, that is, an inequality of the form
\eqref{eq:Bernstein} with $\Omega = \mathbb R^d$.

\begin{prop}\label{prop:globalBernstein}
	There exist positive constants $c,C>0$, depending only on $k$, $m$, and the dimension $d$, such that for all $n\geq0$,
	$\delta>0$, $\lambda \geq 0$ and $f \in \cE_{\lambda,k,m}$ we have
	\[
		\sum_{\abs{\alpha} = n} \frac1{\alpha!} \norm{ \partial_x^\alpha f }_{L^2(\RR^d)}^2
		\leq
		\frac{C_B(n,\lambda,\delta)/2}{n!}\norm{f}_{L^2(\RR^d)}^2
	\]
	with
	\begin{equation}\label{eq:BernsteinConst}
		C_B(n,\lambda,\delta)
		=
		2C^{2(1+n)}\,\delta^{2n}\,(n!)^2 (1+\lambda^{d\scrit}) \euler^{(c+d)\delta^{-1/\nu}} \euler^{c\delta^{-1}\lambda^{\frac1{2m}}}.
	\end{equation}
\end{prop}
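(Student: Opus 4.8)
The plan is to derive the global Bernstein inequality \eqref{eq:Bernstein} with $\Omega = \RR^d$ by combining the two Agmon-type bounds from Proposition~\ref{prop:Agmon}: the one in physical space controls high powers of $\sprod{x}$, and the one in frequency space controls $\sprod{D_x}^{1/\mu}$ and hence all derivatives $\partial_x^\alpha$. Concretely, for $f \in \cE_{\lambda,k,m}$ and a multi-index $\alpha$ with $\abs{\alpha}=n$, I would estimate $\norm{\partial_x^\alpha f}_{L^2(\RR^d)}$ by passing to the Fourier side, where $\partial_x^\alpha$ becomes multiplication by $(i\xi)^\alpha$, and then use $\abs{\xi^\alpha} \leq \abs{\xi}^n \leq \sprod{\xi}^n$. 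The point is to absorb $\sprod{\xi}^n$ into the exponential weight $\euler^{c_1 t \sprod{\xi}^{1/\mu}}$ that appears in the frequency-space Agmon estimate: by the elementary inequality $r^n \leq (n!)\,(a^{-1})^{?}\,\euler^{a r^{1/\mu}} \cdot (\text{something})$ — more precisely, optimizing $r^n \euler^{-a r^{1/\mu}}$ over $r>0$ gives a bound of the shape $(\mu n/a)^{\mu n}\euler^{-\mu n} \leq C^n (n!)^{\mu}\,a^{-\mu n}$ up to Stirling — one trades the polynomial growth in $\abs{\xi}$ for a loss that is controlled by $n!$ to a fractional power and by a negative power of $t$.

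The key steps in order: (1) Fix $f \in \cE_{\lambda,k,m}$ and write, via Plancherel, $\sum_{\abs\alpha=n}\frac1{\alpha!}\norm{\partial_x^\alpha f}_{L^2}^2 = \sum_{\abs\alpha=n}\frac1{\alpha!}\norm{\xi^\alpha \hat f}_{L^2}^2$, and use the multinomial identity $\sum_{\abs\alpha=n}\frac{n!}{\alpha!}\xi^{2\alpha} = \abs{\xi}^{2n} \leq \sprod{\xi}^{2n}$ to reduce to bounding $\frac1{n!}\norm{\sprod{\xi}^n \hat f}_{L^2}^2$. (2) Split $\sprod{\xi}^n \hat f = \bigl(\sprod{\xi}^n \euler^{-c_1 t \sprod{\xi}^{1/\mu}}\bigr)\cdot \euler^{c_1 t\sprod{\xi}^{1/\mu}}\hat f$ and bound the first factor in $L^\infty$ by $\sup_{r\geq 0} r^n \euler^{-c_1 t r^{1/\mu}}$. (3) Evaluate that supremum: the maximizer is $r_* = (\mu n/(c_1 t))^{\mu}$, giving $\sup = (\mu n/(c_1 t))^{\mu n}\euler^{-\mu n} \leq C^n (n!)^{\mu}(c_1 t)^{-\mu n}$ for a dimensional/structural constant $C$, using $n^{\mu n}\euler^{-\mu n}\leq (n!)^{\mu}$. (4) Apply the frequency-space half of Proposition~\ref{prop:Agmon} to bound $\norm{\euler^{c_1 t\sprod{D_x}^{1/\mu}}f}_{L^2}^2 \leq c_2 \lambda^{d\scrit}\euler^{c_3 t\lambda^\scrit}\norm{f}_{L^2}^2$. (5) Combine: $\sum_{\abs\alpha=n}\frac1{\alpha!}\norm{\partial_x^\alpha f}_{L^2}^2 \leq \frac{1}{n!}C^{2n}(n!)^{2\mu}(c_1 t)^{-2\mu n}c_2\lambda^{d\scrit}\euler^{c_3 t\lambda^\scrit}\norm{f}_{L^2}^2$. (6) Finally, set $t = \delta^{?}$ as a function of the free parameter $\delta$ so that the $\delta$-dependence in \eqref{eq:BernsteinConst} comes out; since $\nu = m/(k+m)$ and $\mu = k/(k+m)$, one has $t^{-2\mu n} = t^{-2k n/(k+m)}$, and to match the claimed $\delta^{2n}$ one takes $t$ proportional to $\delta^{(k+m)/k} = \delta^{1/\mu}$, which turns $t^{-2\mu n}$ into $\delta^{-2n}\cdot$const$^n$ — wait, that gives a negative power; instead one takes $t = c\,\delta^{1/\mu}$ so that in the physical-space term $\euler^{c_3 t\lambda^\scrit}$ becomes $\euler^{c\delta^{1/\mu}\lambda^\scrit}$, and... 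Here I must be careful: the claimed constant has $\delta^{2n}$ (not $\delta^{-2n}$) together with $\euler^{(c+d)\delta^{-1/\nu}}$ and $\euler^{c\delta^{-1}\lambda^{1/2m}}$, so the correct choice is to bound $\sprod{\xi}^n$ using \emph{both} weights — peel off $\sprod{\xi}^{n}$ as a product over the $n$ factors, writing each factor's contribution so that the physical-space Agmon estimate (with weight $\euler^{c_1 t\sprod{x}^{1/\nu}}$) also enters — and to choose the Agmon parameter $t$ of the \emph{order of a small constant times} $\delta^{-1/\nu}$ or similar, matching the $\euler^{(c+d)\delta^{-1/\nu}}$ term. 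The bookkeeping that fixes the exact powers is the genuinely delicate part.

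I expect the main obstacle to be precisely this calibration of the free parameter against the two exponents $1/\nu$ and $1/\mu$: one needs a single scaling of $t$ (in terms of $\delta$) that simultaneously produces the prefactor $\delta^{2n}(n!)^2$, the subexponential factor $\euler^{(c+d)\delta^{-1/\nu}}$, and the $\lambda$-dependent factor $\euler^{c\delta^{-1}\lambda^{1/2m}}$. This likely requires interpolating between the physical- and frequency-space Agmon estimates — e.g. controlling $\partial_x^\alpha f$ not directly on the Fourier side but through an interpolation inequality of the form $\norm{\partial_x^\alpha f}_{L^2} \lesssim \norm{f}_{L^2}^{1-\vartheta}\norm{\sprod{D_x}^{N} f}_{L^2}^{\vartheta}$ together with a suitable dyadic decomposition, so that the factor $\euler^{c\delta^{-1}\lambda^{1/2m}}$ emerges from the Agmon weight evaluated near the frequency scale $\lambda^{1/2m}$ where functions in $\cE_{\lambda,k,m}$ concentrate — recall $(-\Delta)^m \leq H_{k,m}$, so $\norm{\sprod{D_x}^{2m}f}_{L^2}\lesssim (1+\lambda)\norm{f}_{L^2}$, which is the source of the exponent $\frac1{2m}$ on $\lambda$. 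Once the localization in frequency and the optimization of $r^n\euler^{-t r^{1/\mu}}$ are set up with $t$ chosen as the appropriate negative power of $\delta$, substituting into \eqref{eq:agmon} and applying Stirling to pass between $(n!)^{2\mu}(n!)^{2\nu} = (n!)^2$ should close the estimate, yielding the stated form of $C_B(n,\lambda,\delta)$; the factor $2$ and the $(1+\lambda^{d\scrit})$ simply absorb $c_2\lambda^{d\scrit}$ together with the trivial $\lambda = 0$ case.
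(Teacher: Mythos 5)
Your skeleton is exactly the paper's: Plancherel reduces the left-hand side to $\tfrac1{n!}\norm{\abs{\xi}^n\hat f}_{L^2}^2$; one splits $\abs{\xi}^n = (\abs{\xi}^n e^{-c_1 t\sprod{\xi}^{1/\mu}})\cdot e^{c_1 t\sprod{\xi}^{1/\mu}}$; the first factor is bounded in $L^\infty$ via the supremum of $r^n e^{-c_1 t r^{1/\mu}}$ (yielding $(\mu/(c_1 t))^{n\mu}(n!)^\mu$), and the second is controlled by Proposition~\ref{prop:Agmon} applied to $\hat f\in\cE_{\lambda,m,k}$. All of that you have. But you correctly flag, and then fail to resolve, the decisive step: choosing $t$.

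The gap is that you try to take $t$ as a pure power of $\delta$ (you consider $t\sim\delta^{1/\mu}$), observe this produces $\delta^{-2n}$ rather than $\delta^{2n}$, and then abandon the direct route in favour of speculation about interpolation inequalities and dyadic decomposition. None of that is needed. The resolution is that $t$ must depend on $\lambda$: for $\lambda > \delta^{-2k}$ take $t = t_0\mu\,\delta^{-1}\lambda^{-1/(2k)}$, which is admissible ($< t_0$). Then $t^{-2n\mu}$ produces the \emph{positive} power $\delta^{2n\mu}$ together with a factor $\lambda^{n\mu/k}$; the latter is tamed by the elementary inequality $x^n\leq n!\,e^x$ applied to $x = \lambda^{1/(2m)}/\delta$, using the identity $\lambda^{\mu n/(2k)}=\lambda^{\nu n/(2m)}$, which yields the remaining $\delta^{2n\nu}(n!)^{2\nu}e^{2\nu\delta^{-1}\lambda^{1/(2m)}}$. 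Multiplying the $\mu$- and $\nu$-indexed pieces gives exactly $\delta^{2n}(n!)^2$; the Agmon exponential $e^{c_3 t\lambda^\zeta}$ collapses to $e^{c_3\mu\delta^{-1}\lambda^{1/(2m)}}$ under this choice of $t$, which is the source of the $e^{c\delta^{-1}\lambda^{1/(2m)}}$ factor. Finally, the regime $\lambda\leq\delta^{-2k}$ (where this $t$ would exceed $t_0$) is handled by the monotonicity $\cE_{\lambda,k,m}\subset\cE_{\delta^{-2k},k,m}$, and substituting $\lambda = \delta^{-2k}$ into the already-established bound produces the $e^{(c+d)\delta^{-1/\nu}}$ term. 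No interpolation, dyadic decomposition, or use of $(-\Delta)^m\leq H_{k,m}$ is required; the proof stays entirely within the direct splitting you set up, once the $\lambda$-dependent parameter $t$ is in place.
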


\begin{proof}
	Using integration by parts (see also Lemma~2.1 and Remark~2.2 in \cite{EgidiS-21}) and Plancherel's theorem, we have
	\[
		\sum_{\abs{\alpha}=n} \frac1{\alpha!} \norm{ \partial_x^\alpha f }_{L^2(\RR^d)}^2
		=
		\frac1{n!} \sprod{ (-\Delta)^nf , f }_{L^2(\RR^d)}
		=
		\frac1{n!} \sprod{ \abs{\xi}^{2n} \hat{f} , \hat{f} }_{L^2(\RR^d)}
		=
		\frac1{n!} \norm{ \abs{\xi}^n \hat{f} }_{L^2(\RR^d)}^2
		,
	\]
	where $\hat f$ denotes the Fourier transform of the function $f$. We therefore have to estimate the quantity
	$\norm{ \abs{\xi}^n \hat{f} }_{L^2(\RR^d)}$. Note here that $\hat{f}$ belongs to $\cE_{\lambda,m,k}$ since $H_{k,m}$ is similar
	to $H_{m,k}$ by Fourier transform.

	With $c_1,c_2,c_3 > 0$ and $t_0 \in (0,1]$ as in Proposition~\ref{prop:Agmon} and $t \in (0,t_0)$, we write
	\[
		\abs{\xi}^n
		=
		\abs{\xi}^n \euler^{-c_1t\sprod{\xi}^{1/\mu}} \euler^{c_1t\sprod{\xi}^{1/\mu}},
	\]
	and estimate
	\[
		\norm{ \abs{\xi}^n\hat{f} }_{L^2(\RR^d)}
		\leq
		\norm{ \sprod{\xi}^n \euler^{-c_1t\sprod{\xi}^{1/\mu}} }_{L^\infty(\RR^d)}
			\norm{ \euler^{c_1t\sprod{\xi}^{1/\mu}}\hat f }_{L^2(\RR^d)},
	\]
	with, moreover,
	\[
		\norm{ \sprod{\xi}^n \euler^{-c_1t\sprod{\xi}^{1/\mu}} }_{L^\infty(\RR^d)}
		=
		\sup_{r\geq1}r^n \euler^{-c_1tr^{1/\mu}}
		\leq
		\Bigl( \frac{\mu}{c_1\euler t} \Bigr)^{n\mu} n^{n\mu}
		\leq
		\Bigl( \frac{\mu}{c_1t} \Bigr)^{n\mu} (n!)^\mu
		.
	\]
	Applying Proposition~\ref{prop:Agmon} to $\hat{f} \in \cE_{\lambda,m,k}$ and taking into account that
	$\norm{\hat{f}}_{L^2(\RR^d)} = \norm{f}_{L^2(\RR^d)}$, we thus obtain from the above that
	\[
		\sum_{\abs{\alpha} = n} \frac1{\alpha!} \norm{ \partial_x^\alpha f }_{L^2(\RR^d)}^2 
		\leq
		\frac{c_2}{n!}\, \Bigl(\frac{\mu}{c_1t}\Bigr)^{2n\mu}\,(n!)^{2\mu}\,\lambda^{d\scrit} \euler^{c_3t\lambda^\scrit}
			\norm{f}_{L^2(\RR^d)}^2
		.
	\]

	Suppose that $\lambda > (1/\delta)^{2k}$. With the particular choice
	$t = t_0\mu\delta^{-1}\lambda^{-1/(2k)} < t_0\mu \leq t_0$, we then have
	\[
		\sum_{\abs{\alpha} = n} \frac1{\alpha!} \norm{ \partial_x^\alpha f }_{L^2(\RR^d)}^2
		\leq
		\frac{c_2}{n!}\,\Bigl(\frac{1}{c_1t_0}\Bigr)^{2n\mu}\,\delta^{2n\mu}\,\lambda^{\frac{n\mu}k}\,(n!)^{2\mu}\,\lambda^{d\scrit}
			\euler^{c_3\mu\delta^{-1}\lambda^{\frac1{2m}}} \norm{f}_{L^2(\RR^d)}^2
		.
	\]
	We further estimate
	\[
		(\lambda^{\frac1{2k}})^{\mu n}
		=
		(\lambda^{\frac1{2m}})^{\nu n}
		=
		\delta^{\nu n} \Bigl(\frac{\lambda^{\frac1{2m}}}{\delta} \Bigr)^{\nu n}
		\leq
		\delta^{\nu n} (n!)^{\nu} \euler^{\nu\delta^{-1}\lambda^{\frac1{2m}}}
		.
	\]
	Combining the last two inequalities, and taking into account that $\mu + \nu = 1$, we conclude that 
	\begin{equation}\label{eq:globalBernsteinLarge}
		\sum_{\abs{\alpha} = n} \frac1{\alpha!} \norm{ \partial_x^\alpha f }_{L^2(\RR^d)}^2
		\leq
		\frac{C^{2(1+n)}}{n!}\,\delta^{2n}\,(n!)^2\, \lambda^{d\scrit} \euler^{c\delta^{-1}\lambda^{\frac1{2m}}}\norm{f}_{L^2(\RR^d)}^2
	\end{equation}
	with $C^2 = \max\{ c_2 , (c_1t_0)^{-\mu} \}$ and $c = \max\{ 2 , c_3 \}$.

	It remains to consider the case $\lambda \leq (1/\delta)^{2k}$. Since then $\cE_{\lambda,k,m} \subset \cE_{(1/\delta)^{2k},k,m}$,
	we obtain from \eqref{eq:globalBernsteinLarge} with $\lambda$ replaced by $(1/\delta)^{2k}$ that
	\begin{equation}\label{eq:globalBernsteinSmall}
		\begin{aligned}
			\sum_{\abs{\alpha} = n} \frac1{\alpha!} \norm{ \partial_x^\alpha f }_{L^2(\RR^d)}^2
			&\leq
			\frac{C^{2(1+n)}}{n!}\,\delta^{2n}\,(n!)^2\, \delta^{-2kd\scrit} \euler^{c\delta^{-1}\delta^{-\frac km}}
				\norm{f}_{L^2(\RR^d)}^2\\
			&\leq
			\frac{C^{2(1+n)}}{n!}\,\delta^{2n}\,(n!)^2\, \euler^{(c+d)\delta^{-1/\nu}} \norm{f}_{L^2(\RR^d)}^2
			,
		\end{aligned}
	\end{equation}
	where for the last inequality we used $\delta^{-2kd\scrit} = \delta^{-d/\nu} \leq \euler^{d\delta^{-1/\nu}}$. In light of
	$\euler^{(c+d)\delta^{-1/\nu}} \geq 1$ and $\euler^{\delta^{-1}\lambda^{\frac1{2m}}} \geq 1$, the claim now follows from
	\eqref{eq:globalBernsteinLarge} and \eqref{eq:globalBernsteinSmall}.
\end{proof}

\begin{rk}\label{rk:Bernstein}
	(1)
	Bernstein inequalities closely related to Proposition~\ref{prop:globalBernstein} have recently been obtained in
	\cite[Eq.\ (4.5)]{Martin} using smoothing properties of the semigroup associated to (fractional powers of) $H_{k,m}$ established
	in \cite{Alp20b}. These smoothing properties also rely on the Agmon estimates for eigenfunctions, so that our proof above is
	more direct. Moreover, our constant in \eqref{eq:BernsteinConst} incorporates the parameter $\delta$, which may be used to force
	convergence of an associated series, see \eqref{eq:defh} below, and thus makes our inequality more suitable for our purposes.

	(2)
	In the particular case of the harmonic oscillator, that is, $k = m = 1$, Bernstein inequalities without the unfavorable term
	$1+\lambda^{d\scrit}$ have already been obtained in the literature. More precisely, \cite[Proposition~B.1]{EgidiS-21} (cf.\ also
	\cite[Proposition~3.3\,(i)]{BJKPS}) states that for all $n \geq 0$, $\delta > 0$, $\lambda \geq 0$, and
	$f \in \cE_{\lambda,1,1}$ we have
	\[
		\sum_{\abs{\alpha} = n} \frac1{\alpha!} \norm{ \partial_x^\alpha f }_{L^2(\RR^d)}^2
		\leq
		\frac{C_B(n,\lambda,\delta)/2}{n!}\norm{f}_{L^2(\RR^d)}^2
	\]
	with
	\begin{equation}\label{eq:BernsteinHarmOsc}
		C_B(n,\lambda,\delta)
		=
		2(2\delta)^{2n}\,(n!)^2\,\euler^{e\delta^{-2}} \euler^{2\delta^{-1}\sqrt{\lambda}}
		.
	\end{equation}
\end{rk}

We are finally able to derive the local Bernstein inequalities of the desired form.

\begin{cor}\label{cor:locBernstein}
	Let $\lambda > 0$, and let $\Omega \subset \RR^d$ be an open set containing the ball $B(0,(2\lambda)^{1/2k})$. Then, for all
	$n\geq0$, $\delta>0$, and $f \in \cE_{\lambda,k,m}$ we have
	\[
		\norm{f}_{L^2(\RR^d)}^2
		\leq
		2\norm{f}_{L^2(\Omega)}^2
		,
	\]
	and
	\[
		\sum_{\abs{\alpha} = n}\frac1{\alpha!} \norm{ \partial_x^\alpha f }_{L^2(\Omega)}^2
		\leq
		\frac{C_B(n,\lambda,\delta)}{n!} \norm{f}_{L^2(\Omega)}^2
		,
	\]
	with $C_B(n,\lambda,\delta)$ as in \eqref{eq:BernsteinConst}.
\end{cor}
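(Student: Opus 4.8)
The plan is to deduce both assertions from the global Bernstein inequality in Proposition~\ref{prop:globalBernstein} together with the localization provided by the Agmon estimate in Proposition~\ref{prop:Agmon}. First I would establish the $L^2$-localization estimate $\norm{f}_{L^2(\RR^d)}^2 \leq 2\norm{f}_{L^2(\Omega)}^2$. Since $\Omega \supset B(0,(2\lambda)^{1/2k})$, it suffices to show $\norm{f}_{L^2(B(0,(2\lambda)^{1/2k})^c)}^2 \leq \tfrac12 \norm{f}_{L^2(\RR^d)}^2$, which would then give $\norm{f}_{L^2(\RR^d)}^2 = \norm{f}_{L^2(B(0,(2\lambda)^{1/2k}))}^2 + \norm{f}_{L^2(B(0,(2\lambda)^{1/2k})^c)}^2 \leq \norm{f}_{L^2(\Omega)}^2 + \tfrac12\norm{f}_{L^2(\RR^d)}^2$ and hence the claim after rearranging. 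To prove the tail bound, on the complement of the ball one has $\sprod{x}^{1/\nu} \geq \abs{x}^{1/\nu} \geq (2\lambda)^{1/(2k\nu)} = (2\lambda)^{\scrit}$ (using $1/(2k\nu) = (k+m)/(2km) = \scrit$), so $e^{c_1 t \sprod{x}^{1/\nu}} \geq e^{c_1 t (2\lambda)^{\scrit}}$ there. Combining with Proposition~\ref{prop:Agmon} yields
\[
	e^{2c_1 t (2\lambda)^{\scrit}} \norm{f}_{L^2(B(0,(2\lambda)^{1/2k})^c)}^2
	\leq
	\norm{e^{c_1 t \sprod{x}^{1/\nu}} f}_{L^2(\RR^d)}^2
	\leq
	c_2 \lambda^{d\scrit} e^{c_3 t \lambda^{\scrit}} \norm{f}_{L^2(\RR^d)}^2
	.
\]
Choosing $t \in [0,t_0)$ appropriately — essentially $t$ comparable to a small constant so that $2c_1 t (2\lambda)^{\scrit} - c_3 t \lambda^{\scrit}$ dominates $\log(c_2 \lambda^{d\scrit})$ for all $\lambda > 0$ — would give the factor $\tfrac12$. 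The delicate point is uniformity in $\lambda$: for small $\lambda$ the term $\lambda^{d\scrit}$ is harmless, but one must check the exponential gain beats the polynomial prefactor across the whole range; this is the one place where a little care is needed rather than a routine estimate, and it may require slightly adjusting the constants $c_1,c_2,c_3,t_0$ from Proposition~\ref{prop:Agmon}.

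For the local Bernstein inequality, the strategy is simply to combine the global one with the localization. From Proposition~\ref{prop:globalBernstein} we have, for every $n \geq 0$, $\delta > 0$, and $f \in \cE_{\lambda,k,m}$,
\[
	\sum_{\abs{\alpha}=n} \frac1{\alpha!} \norm{\partial_x^\alpha f}_{L^2(\Omega)}^2
	\leq
	\sum_{\abs{\alpha}=n} \frac1{\alpha!} \norm{\partial_x^\alpha f}_{L^2(\RR^d)}^2
	\leq
	\frac{C_B(n,\lambda,\delta)/2}{n!} \norm{f}_{L^2(\RR^d)}^2
	,
\]
and then I would invoke the $L^2$-localization just proved, $\norm{f}_{L^2(\RR^d)}^2 \leq 2\norm{f}_{L^2(\Omega)}^2$, to bound the right-hand side by $\frac{C_B(n,\lambda,\delta)}{n!}\norm{f}_{L^2(\Omega)}^2$. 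This immediately gives the stated inequality with exactly the constant $C_B(n,\lambda,\delta)$ from \eqref{eq:BernsteinConst}, the factor $2$ being absorbed precisely to turn the $C_B/2$ into $C_B$.

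I expect no serious obstacle: the corollary is essentially an assembly step. The only item demanding genuine attention is the verification that the Agmon estimate \eqref{eq:agmon}, applied on the complement of the ball of radius $(2\lambda)^{1/2k}$, produces a tail of size at most half the full $L^2$-norm \emph{uniformly in $\lambda \geq 0$}; this hinges on the exponent identity $\abs{x}^{1/\nu} \geq (2\lambda)^{\scrit}$ on that complement and on choosing $t$ (depending only on $k,m,d$) small enough that the resulting exponential factor $e^{(2c_1(2\lambda)^{\scrit} - c_3\lambda^{\scrit})t}$ outpaces the polynomial correction $c_2\lambda^{d\scrit}$ for every $\lambda$. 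One can also dispatch small $\lambda$ separately (where $B(0,(2\lambda)^{1/2k})$ is tiny but $\lambda^{d\scrit}$ is also small) and large $\lambda$ by the exponential gain, so the argument is robust. Everything else is a direct consequence of Proposition~\ref{prop:globalBernstein}.
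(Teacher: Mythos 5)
Your treatment of the second inequality is exactly the paper's: restrict the $L^2(\RR^d)$-norms on the left of Proposition~\ref{prop:globalBernstein} to $\Omega$, then absorb the factor $2$ coming from the localization estimate into the $C_B/2$. That part is fine.

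The route you propose for the localization estimate $\norm{f}_{L^2(\RR^d)}^2 \leq 2\norm{f}_{L^2(\Omega)}^2$ has a genuine gap, and you half-acknowledge it yourself. After plugging the Agmon estimate \eqref{eq:agmon} into the tail over $B(0,(2\lambda)^{1/2k})^c$ you arrive at a bound of the shape
\[
	\norm{f}_{L^2(B(0,(2\lambda)^{1/2k})^c)}^2
	\leq
	c_2\,\lambda^{d\scrit}\,e^{(c_3 - 2c_1\cdot 2^{\scrit})\,t\,\lambda^{\scrit}}\,\norm{f}_{L^2(\RR^d)}^2
	,
\]
and for a fixed choice of $t \in (0,t_0)$ this is $\leq \tfrac12\norm{f}^2_{L^2(\RR^d)}$ uniformly in $\lambda$ only if $2c_1\cdot 2^{\scrit} > c_3$. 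But $c_1$ and $c_3$ are unrelated black-box constants inherited from the Agmon estimate (and in the paper's own proof of Proposition~\ref{prop:Agmon}, $c_3 = 2\tilde c$ comes from a different place than $c_1$); nothing guarantees this sign condition. If $2c_1\cdot 2^{\scrit} \le c_3$, the exponent is nonnegative and the right-hand side blows up as $\lambda \to +\infty$, so the argument cannot close. You cannot repair it by taking $t$ larger, since $t$ is constrained to $[0,t_0)$.

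The paper avoids all of this with a much simpler observation that sidesteps Proposition~\ref{prop:Agmon} entirely: since $H_{k,m} = (-\Delta)^m + \abs{x}^{2k}$ and $(-\Delta)^m \geq 0$, one has $\abs{x}^{2k} \leq H_{k,m}$ in the form sense, so by functional calculus $\norm{\abs{x}^k f}_{L^2(\RR^d)}^2 \leq \sprod{H_{k,m} f, f}_{L^2(\RR^d)} \leq \lambda\norm{f}_{L^2(\RR^d)}^2$ for $f \in \cE_{\lambda,k,m}$. On the complement of $B(0,(2\lambda)^{1/2k})$ one has $\abs{x}^{-2k} \leq (2\lambda)^{-1}$, which immediately gives the tail bound $\norm{f}_{L^2(B(0,(2\lambda)^{1/2k})^c)}^2 \leq \tfrac12\norm{f}_{L^2(\RR^d)}^2$ with no constants to juggle. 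Replace your Agmon-based step with this quadratic-form argument and the proof is complete.
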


\begin{proof}
	We have
	\[
		\norm{ \abs{x}^k f }_{L^2(\RR^d)}^2
		=
		\sprod{ \abs{x}^{2k}f , f }_{L^2(\RR^d)}
		\leq
		\sprod{ H_{k,m}f , f }_{L^2(\RR^d)}
		\leq
		\lambda\norm{f}_{L^2(\RR^d)}^2
		,
	\]
	where the last inequality follows by functional calculus. Hence,
	\[
		\norm{f}_{L^2(\RR^d\setminus B(0,(2\lambda)^{1/2k}))}^2
		=
		\norm{ \abs{x}^{-k} \abs{x}^k f }_{L^2(\RR^d\setminus B(0,(2\lambda)^{1/2k}))}^2
		\leq
		\frac1{2\lambda} \norm{ \abs{x}^k f }_{L^2(\RR^d)}^2
		\leq
		\frac12\norm{f}_{L^2(\RR^d)}^2
		.
	\]
	Since $\Omega$ contains the ball $B(0,(2\lambda)^{1/2k})$ by hypothesis, this implies that
	\[
		\norm{f}_{L^2(\RR^d)}^2
		\leq
		2\norm{f}_{L^2(B(0,(2\lambda)^{1/2k}))}^2
		\leq
		2\norm{f}_{L^2(\Omega)}^2
		.
	\]
	Moreover, we deduce from Proposition~\ref{prop:globalBernstein} that
	\[
		\sum_{\abs{\alpha}=n} \frac{1}{\alpha!} \norm{ \partial_x^\alpha f }_{L^2(\Omega)}^2
		\leq
		\sum_{\abs{\alpha}=n} \frac{1}{\alpha!} \norm{ \partial_x^\alpha f }_{L^2(\RR^d)}^2
		\leq
		\frac{C_B(n,\lambda,\delta)/2}{n!} \norm{f}_{L^2(\RR^d)}^2
		,
	\]
	which, together with the former inequality, proves the claim.
\end{proof}

For future reference and in light of Proposition~\ref{prop:ES}, we now consider for $\delta,l > 0$ and $\lambda > 0$ the quantity
\begin{align*}
	h(l,\lambda,\delta)
	&:=
	\sum_{n\geq0} \sqrt{C_B(n,\lambda,\delta)}\, \frac{(10dl)^n}{n!} \\
	&=
	\sqrt 2C\, (1+\lambda^{d\scrit})^{1/2} \euler^{(c+d)2^{-1}\delta^{-1/\nu}} \euler^{c(2\delta)^{-1}\lambda^{\frac1{2m}}}
		\sum_{n\geq0}(10dlC\delta)^n
	.
\end{align*}
With the particular choice $\delta^{-1} = 20dlC$, we deduce that there is a constant $C' > 0$, depending only on $k$, $m$,
and the dimension $d$, such that
\begin{equation}\label{eq:defh}
	h(l,\lambda)
	:=
	h(l,\lambda,(20dlC)^{-1})
	\leq
	C' (1+\lambda^{d\scrit})^{1/2} \euler^{C'l^{1/\nu}} \euler^{C'l\lambda^{\frac{1}{2m}}}
	.
\end{equation}

\subsection{Conclusion of Theorem~\ref{thm:main}}
Let $\omega \subset \RR^d$ be a measurable set as in \eqref{eq:omega}, and let $f \in \cE_{\lambda,k,m}$ with
$\lambda \geq 0$. Consider $\lambda_{k,m} := \min\spec(H_{k,m}) > 0$. Then, if $\lambda \in [0,\lambda_{k,m})$, we have
$\cE_{\lambda,k,m} = \{0\}$ and there is nothing to prove. It therefore suffices to consider $\lambda \geq \lambda_{k,m} > 0$.

The key step is to use the well-known Besicovitch covering theorem in the following formulation taken from
\cite[Proposition~7.1]{DickeSV-23}; see also \cite[Theorem~2.7]{Mattila-99}.

\begin{prop}[Besicovitch]
	Let $A \subset \RR^d$ be bounded, and let $\cB$ be a family of open balls such that each point in $A$ is the center of some ball
	from $\cB$. Then there are at most countably many balls $(B_j)_j \subset \cB$ such that
	\begin{equation}\label{eq:besi}
		\mathbbm1_A
		\leq
		\sum_j\mathbbm1_{\overline{B}_j}
		\leq
		K_{\mathrm{Bes}}^d
		,
	\end{equation}
	where $K_{\mathrm{Bes}} \geq 1$ is a universal constant.
\end{prop}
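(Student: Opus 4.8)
This is the classical Besicovitch covering theorem (going back to Besicovitch), and the natural plan is to recall its standard proof; only the existence of the countable subfamily with the two stated properties matters here, not the optimal value of $K_{\mathrm{Bes}}$.

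First I would set up the usual \emph{greedy selection}. Discarding from $\cB$ the balls not centered in $A$ changes nothing, and if the radii of balls in $\cB$ are unbounded one may take a single ball of radius exceeding $\diam(A)$, which already covers $A$; so assume $R := \sup\{ r : B(a,r) \in \cB \} < \infty$. Put $A_1 := A$ and, as long as $A_j \neq \emptyset$, set $R_j := \sup\{ r : B(a,r) \in \cB,\ a \in A_j \} \leq R$, pick $B_j = B(a_j,r_j) \in \cB$ with $a_j \in A_j$ and $r_j > \tfrac34 R_j$, and let $A_{j+1} := A_j \setminus B_j = A \setminus \bigcup_{i \leq j} B_i$. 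This produces an at most countable family. For $i < j$ one has $a_j \in A_j \subset A_i$, so $a_j \notin B_i$, giving $\abs{a_i - a_j} \geq r_i$, and also $r_j \leq R_i < \tfrac43 r_i$; an elementary computation then shows the shrunk balls $B(a_j, r_j/3)$ are pairwise disjoint. Since their centers all lie in the bounded set $A$, this disjointness forces $r_j \to 0$ whenever the selection does not terminate (otherwise infinitely many disjoint balls of radius bounded below would be packed into a fixed bounded set). Consequently $A \subseteq \bigcup_j B_j$: an uncovered point $x \in A$ would lie in every $A_j$, forcing $R_j$ to stay bounded below by the radius of some fixed ball of $\cB$ centered at $x$, contradicting $r_j > \tfrac34 R_j \to 0$ (and if the process stops, $A_{j+1}=\emptyset$ gives the covering directly). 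This already yields $\bmone_A \leq \sum_j \bmone_{\overline{B}_j}$.

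The remaining, and genuinely hard, point is the \emph{bounded overlap}: every $z \in \RR^d$ should lie in at most a dimensional number $N_d$ of the closed balls $\overline{B}_j$, which then gives the bound with $K_{\mathrm{Bes}} := N_d^{1/d} \geq 1$ (a universal constant independent of the configuration). Fixing $z$ and two indices $j < k$ with $z \in \overline{B}_j \cap \overline{B}_k$, one has $\abs{z - a_j} \leq r_j$, $\abs{z - a_k} \leq r_k < \tfrac43 r_j$, and $\abs{a_j - a_k} \geq r_j$. From the triangle inequality, at most one index $j$ can have $z$ deep inside $B_j$ in the sense $\abs{z-a_j} \leq \tfrac14 r_j$. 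For the other indices one argues geometrically: either a center $a_k$ is far enough from $z$ relative to $r_j$, in which case the law of cosines together with $\abs{a_j-a_k}\geq r_j$ forces the directions $(a_j - z)/\abs{a_j-z}$ of comparable-scale balls to be pairwise separated on the unit sphere by a fixed angle $\theta_0 > 0$, or a ball is so much smaller than an earlier one that $\abs{z-a_j}$ is necessarily close to $r_j$ and the same angular separation follows; since $S^{d-1}$ contains at most $N_d$ unit vectors with pairwise angular distance $\geq \theta_0$ and the relevant scales are themselves dimensionally limited (again by the disjointness of the shrunk balls), this caps the number of balls through $z$. I expect this overlap estimate to be the only real obstacle — the selection and the covering property are routine — and for the precise bookkeeping of the angle argument I would simply follow the classical treatments in \cite{DickeSV} and \cite{Mattila-99}.
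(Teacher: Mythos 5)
The paper does not prove this proposition at all: it is stated verbatim from \cite[Proposition~7.1]{DickeSV} (see also \cite[Theorem~2.7]{Mattila-99}), so there is no ``paper's proof'' to compare against; the authors simply invoke the classical result. Your sketch is the standard greedy-selection proof of the Besicovitch covering theorem, and the parts you spell out are correct. In particular, the disjointness of the shrunk balls checks out: for $i<j$ one has $\abs{a_i-a_j}\geq r_i$ and $r_j < \tfrac43 r_i$, so $\tfrac13 r_i + \tfrac13 r_j < \tfrac79 r_i < r_i \leq \abs{a_i - a_j}$; the implication $r_j\to 0$ and the covering property then follow as you say. The bounded-overlap step you only sketch and defer to the literature; since the paper itself defers the whole proposition to the literature, that is acceptable here, though the angular/size dichotomy as you phrase it is a bit muddled and is really the only nontrivial content. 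Two small remarks: \cite{DickeSV} is the same recent preprint the paper cites and is not a ``classical treatment'' (Mattila is); and for $K_{\mathrm{Bes}} := N_d^{1/d}$ to be a genuinely \emph{universal} (dimension-independent) constant as claimed in the statement, you should record that the overlap number $N_d$ grows at most exponentially in $d$, which the packing argument does deliver but which you leave implicit.
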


We are finally in position to prove Theorem~\ref{thm:main}.

\begin{proof}[Proof of Theorem~\ref{thm:main}]
	Suppose that $\lambda \geq \lambda_{k,m} > 0$, and let
	\[
		A
		:=
		B(0,(2\lambda)^{1/2k})
		\quad\text{ and }\quad
		\cB
		:=
		\bigl\{B(x,\rho(x)) \colon x \in A \bigr\}
		.
	\]
	Besicovitch's covering theorem then implies that there is a finite or countably infinite collection of points $x_j \in A$
	such that \eqref{eq:besi} holds with $B_j = B(x_j,\rho(x_j))$. In particular, $A$ is contained in the union
	$\bigcup_j \overline{B}_j$. Let $\Omega$ be the interior of $\bigcup_j \overline{B}_j$. Then, $\Omega$ is open and contains the
	open set $A$ by definition. Moreover, it is easy to see that $\Omega$ is a domain.

	With $\theta_{\lambda,k}$ and $L_{\lambda,k}$ from \eqref{eq:geom}, for each $j$ we clearly have
	$\sigma(x_j) \geq \theta_{\lambda,k}$ and $\rho(x_j) \leq L_{\lambda,k}$. Hence, the family $\{B_j\}_j$ gives a
	$(K_{\mathrm{Bes}}^d , L_{\lambda,k})$-covering of $\Omega$ in the sense of Section~\ref{sec:abstract}, and from
	\eqref{eq:omega} we have
	\[
		\inf_j \frac{\abs{ \omega \cap B_j }}{\diam(B_j)^d}
		=
		\frac{\abs{B(0,1)}}{2^d} \inf_j \frac{\abs{ \omega\cap B(x_j,\rho(x_j)) }}{\abs{B(x_j,\rho(x_j))}}
		\geq
		\frac{\abs{B(0,1)}}{2^d}\,\inf_j \sigma(x_j)
		\geq
		\frac{\abs{B(0,1)}}{2^d}\,\theta_{\lambda,k}
		.
	\]
	Taking into account Corollary~\ref{cor:locBernstein} and \eqref{eq:defh}, applying Proposition~\ref{prop:ES} with
	$\{ Q_j \}_j = \{ B_j \}_j$, $l = L_{\lambda,k}$, and $h(\lambda) = h(L_{\lambda,k},\lambda)$ therefore yields
	\[
		\norm{f}_{L^2(\RR^d)}^2
		\leq
		2\norm{f}_{L^2(\Omega)}^2
		\leq
		\frac{K_{\mathrm{Bes}}^d} 3\biggl(\frac{24d2^d}{\theta_{\lambda,k}}\biggr)^{2\frac{\log K_{\mathrm{Bes}}^d}{\log2}
			+ 4\frac{\log h(\lambda)}{\log 2} + 5} \norm{f}_{L^2(\omega\cap\Omega)}^2
		.
	\]
	Here, we observe that for all $r \geq 0$, we have
	\[
		1+\lambda^r
		\leq
		(1+\lambda_{k,m}^{-r}) \lambda^r
		\leq
		(1+\lambda_{k,m}^{-r}) (1+\lambda)^r
		,
	\]
	so that
	\[
		\log h(\lambda)
		\leq
		\frac{1}{2}\log\bigl( (C')^2(1+\lambda_{k,m}^{-d\scrit}) \bigr) + \frac{d\scrit}{2}\log(1+\lambda)
			+ C'(L_{\lambda,k})^{1/\nu}
			+ C'L_{\lambda,k}\lambda^{\frac1{2m}}
		.
	\]
	The claim therefore follows from the above upon an appropriate choice of the constant $K$, depending on $d$, $C'$,
	$\lambda_{k,m}$, $\nu$, $\scrit$, and $K_{\mathrm{Bes}}$, that is, effectively only on $d$, $k$, and $m$.
\end{proof}

We close this section by briefly discussing how the proof of Theorem~\ref{thm:main} can be adapted to obtain
Theorem~\ref{thm:mainharmo}.

\begin{proof}[Proof of Theorem~\ref{thm:mainharmo}]
	As mentioned in Remark~\ref{rk:Bernstein}\,(2), in the particular case of the harmonic oscillator, that is, $k = m = 1$, there
	are Bernstein inequalities available that do not contain the unfavorable term $1 + \lambda^{d\scrit}$. Upon replacing the
	constant \eqref{eq:BernsteinConst} by \eqref{eq:BernsteinHarmOsc}, one can then follow the proof of Theorem~\ref{thm:main}
	verbatim towards a proof of Theorem~\ref{thm:mainharmo}, thereby avoiding the term $\log(1+\lambda)$ in the final estimate.
\end{proof}

\section{Proof of the exact null-controllability results}\label{sec:cont}
In this last main section we use the spectral inequalities given by Theorems~\ref{thm:mainharmo} and~\ref{thm:main}
in order to prove the exact null-controllability results from Section~\ref{ssec:exactContr} for the evolution equations
\eqref{eq:fractionalShubin} and \eqref{eq:bagrushin}.

Since the operators $H^s_{k,m}$ and $(-\Delta_{\gamma})^s$ are selfadjoint in $L^2(\mathbb R^d)$ and
$L^2(\RR^d\times\TT^d)$, respectively, the Hilbert Uniqueness Method implies that the exact null-controllability of these
equations is equivalent to the exact observability of the associated semigroups $(e^{-tH^s_{k,m}})_{t\geq0}$ and
$(e^{-t(-\Delta_{\gamma})^s})_{t\geq 0}$. The latter is defined as follows.

\begin{dfn}[Exact observability]
	Let $\tau>0$, and let $\Omega \subset \RR^d$ and $\omega \subset \Omega$ be measurable. A strongly continuous semigroup
	$(T(t))_{t\geq0}$ on $L^2(\Omega)$ is said to be exactly observable from the set $\omega$ in time $\tau$ if there exists a
	positive constant $C_{\omega,\tau} > 0$ such that for all $g\in L^2(\Omega)$, we have
	\[
		\norm{ T(\tau)g }^2_{L^2(\Omega)}
		\leq
		C_{\omega,\tau} \int_0^{\tau} \norm{ T(t)g }^2_{L^2(\omega)}\,\dd t
		.
	\]
\end{dfn}

In order to prove exact observability estimates, with an explicit observability constant $C_{\omega,\tau}$, we use the following
quantitative result that is based on the well-known Lebeau-Robbiano strategy and is particularly well adapted to the
equations we are studying.

\begin{thm}[{\cite[Theorem 2.8]{NakicTTV-20}}]\label{thm:obs}
	Let $A$ be a non-negative selfadjoint operator in $L^2(\RR^d)$, and let $\omega \subset \RR^d$ be measurable. Suppose that there
	are $d_0 > 0$, $d_1 \geq 0$, and $\power \in (0,1)$ such that for all $\lambda \geq 0$ and
	$f \in \cE_\lambda(A)$,
	\[
		\norm{f}^2_{L^2(\RR^d)}
		\leq
		d_0e^{d_1\lambda^{\power}} \norm{f}^2_{L^2(\omega)}
		.
	\]
	Then, there exist positive constants $c_1,c_2,c_3 > 0$, only depending on $\eta$, such that for all $T > 0$ and
	$g \in L^2(\RR^d)$ we have the observability estimate
	\[
		\norm{ e^{-TA}g }^2_{L^2(\RR^d)}
		\leq
		\frac{C_{\obs}}T\int_0^T \norm{ e^{-tA}g }^2_{L^2(\omega)} \,\dd t
		,
	\]
	where the positive constant $C_{\obs}>0$ is given by
	\[
		C_{\obs}
		=
		c_1d_0(2d_0+1)^{c_2}\exp\biggl( c_3\biggl( \frac{d_1}{T^{\power}} \biggr)^{\frac1{1-\power}} \biggr)
		.
	\]
\end{thm}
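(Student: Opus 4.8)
The plan is to prove this via the Lebeau--Robbiano strategy, combining the assumed spectral inequality with the elementary dissipation bound for the semigroup $(e^{-tA})_{t\geq0}$ and iterating over a sequence of time sub-intervals with increasing spectral thresholds. Write $P_\lambda := \bmone_{(-\infty,\lambda]}(A)$; since $A$ is non-negative and selfadjoint one has $\norm{e^{-tA}(\mathrm{Id}-P_\lambda)}\leq e^{-\lambda t}$ for all $t,\lambda\geq0$, and $t\mapsto\norm{e^{-tA}g}_{L^2(\RR^d)}$ is non-increasing. The first step is a one-step estimate: for $0\leq s<t$, $\lambda\geq0$ and $g\in L^2(\RR^d)$, splitting $e^{-tA}g = P_\lambda e^{-tA}g + (\mathrm{Id}-P_\lambda)e^{-tA}g$, bounding the high-frequency part by dissipation from time $s$, applying the spectral inequality to $P_\lambda e^{-tA}g\in\cE_\lambda(A)$, and estimating its $L^2(\omega)$-norm by $\norm{e^{-tA}g}_{L^2(\omega)}$ plus (again) the high-frequency part, I would obtain
\[
	\norm{e^{-tA}g}_{L^2(\RR^d)}^2
	\leq
	C(d_0+1)e^{d_1\lambda^{\power}}\Bigl( \norm{e^{-tA}g}_{L^2(\omega)}^2 + e^{-2\lambda(t-s)}\norm{e^{-sA}g}_{L^2(\RR^d)}^2 \Bigr)
\]
with an absolute constant $C$.

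Next I would iterate. Fixing $T>0$, I would set $t_k := T(1-2^{-k})$, so that $t_k\uparrow T$ with gaps $\tau_k := t_{k+1}-t_k = T2^{-(k+1)}$, and choose thresholds $\lambda_k := \Lambda q^k$ with a ratio $q>2^{1/(1-\power)}$ and a base level $\Lambda>0$ to be optimized. Applying the one-step estimate with $s=t_k$ at times $t$ in the upper half of $[t_k,t_{k+1}]$, integrating in $t$ and using monotonicity of $t\mapsto\norm{e^{-tA}g}_{L^2(\RR^d)}$, yields a recursion
\[
	\norm{e^{-t_{k+1}A}g}_{L^2(\RR^d)}^2
	\leq
	\frac{a_k}{\tau_k}\int_{t_k}^{t_{k+1}}\norm{e^{-tA}g}_{L^2(\omega)}^2\,\dd t
	+ b_k\,\norm{e^{-t_kA}g}_{L^2(\RR^d)}^2
\]
where $a_k,b_k>0$ satisfy $a_k\leq C(d_0+1)e^{d_1\lambda_k^{\power}}$ and $b_k\leq C(d_0+1)e^{d_1\lambda_k^{\power}-\lambda_k\tau_k}$. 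Since $\lambda_k\tau_k$ grows like $\Lambda T(q/2)^k$ whereas $\lambda_k^{\power}$ grows only like $\Lambda^{\power}q^{\power k}$, and $\power<1$ forces $q/2>q^{\power}$ for the chosen $q$, the exponents $d_1\lambda_k^{\power}-\lambda_k\tau_k$ become strongly negative; in particular $b_k\to0$ super-exponentially, the partial products $\prod_{j<k}b_j$ tend to $0$, and the ``remainder'' $\prod_{j<k}b_j\cdot\norm{e^{-t_kA}g}_{L^2(\RR^d)}^2\leq\prod_{j<k}b_j\cdot\norm{g}_{L^2(\RR^d)}^2$ vanishes as $k\to\infty$. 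Telescoping the recursion then gives
\[
	\norm{e^{-TA}g}_{L^2(\RR^d)}^2
	\leq
	\Bigl( \sum_{k\geq0} \frac{a_k}{\tau_k}\prod_{j<k}b_j \Bigr)\int_0^T\norm{e^{-tA}g}_{L^2(\omega)}^2\,\dd t
	.
\]

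Finally I would extract $C_{\obs}$ from this series. The products $\prod_{j<k}b_j$ decay faster than any geometric sequence, so they absorb the growth of $1/\tau_k = 2^{k+1}/T$, of $a_k$, and of the accumulated $(d_0+1)$-factors; hence the series converges and is governed by its first terms, which carry the factor $e^{d_1\lambda_0^{\power}}=e^{d_1\Lambda^{\power}}$. Choosing $\Lambda$ of order $(d_1/T)^{1/(1-\power)}$ balances this growth against the decay $e^{-\lambda_0\tau_0}=e^{-\Lambda T/2}$ already present in $b_0$ and produces the exponential factor $\exp\bigl( c_3(d_1/T^{\power})^{1/(1-\power)} \bigr)$, while the polynomial factor $d_0(2d_0+1)^{c_2}$ collects the surviving $d_0$-dependence, with $c_1,c_2,c_3$ depending only on $\power$ (through $q$ and the geometric sums). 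I expect the actual work to lie precisely in this last bookkeeping --- fixing $q$, bounding the infinite product and the series uniformly in the parameters, and performing the optimization over $\Lambda$ --- whereas the conceptual mechanism, namely that the dissipation rate (power $1$) eventually beats the spectral-inequality rate (power $\power<1$), is transparent. An alternative, essentially equivalent, route is to invoke directly the abstract ``uncertainty relation plus dissipation implies observability'' meta-theorem, whose proof consists of exactly this iteration.
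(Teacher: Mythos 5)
Your overall strategy --- high-/low-frequency decomposition via $P_\lambda$, one-step estimate from the spectral inequality plus dissipation, dyadic iteration with thresholds $\lambda_k = \Lambda q^k$, and optimisation of the base level $\Lambda$ --- is exactly the Lebeau--Robbiano mechanism underlying \cite[Theorem 2.8]{NakicTTV-20}, which the paper cites as a black box. There is, however, a genuine gap between the recursion you state and the telescoping you write down, and as set up the iteration does not close. Unrolling $F(t_{k+1})\leq c_k + b_k F(t_k)$, with $c_k := (a_k/\tau_k)\int_{t_k}^{t_{k+1}}\norm{e^{-tA}g}^2_{L^2(\omega)}\,\dd t$ and the grid $t_k = T(1-2^{-k})\uparrow T$, from $k=0$ to $K-1$ gives
\[
F(t_K) \leq \sum_{k=0}^{K-1} c_k \prod_{j=k+1}^{K-1} b_j + \Bigl(\prod_{j=0}^{K-1} b_j\Bigr)\norm{g}_{L^2(\RR^d)}^2
,
\]
so the factor attached to $c_k$ is the tail product $\prod_{j>k}b_j$, not the head product $\prod_{j<k}b_j$ you wrote. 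In particular $c_{K-1}$ enters with coefficient $1$; since $a_{K-1}/\tau_{K-1}$ grows super-exponentially in $K$ while $\int_{t_{K-1}}^{t_K}\norm{\cdot}^2_{L^2(\omega)}\,\dd t$ need not be correspondingly small, the right-hand side is unbounded in $K$, and one cannot send $K\to\infty$ to bound $F(T)$.

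The remedy is to reverse the orientation of the grid. Put $s_0 = T$, $s_k = T2^{-k}\downarrow 0$, keep $\lambda_k = \Lambda q^k\uparrow\infty$ (the state $e^{-TA}g$ is already spectrally localised, so the high thresholds are needed near $t=0$, not near $t=T$), and apply your one-step estimate on $[s_{k+1},s_k]$ with low time $s_{k+1}$. This yields the backward recursion $F(s_k)\leq c_k + b_k F(s_{k+1})$, whose unrolling from $k=0$ is precisely what you intended, namely
\[
F(T) \leq \sum_{k=0}^{K} c_k\prod_{j<k}b_j + \Bigl(\prod_{j\leq K}b_j\Bigr)\norm{g}_{L^2(\RR^d)}^2
.
\]
Now the remainder vanishes as you argued, and the series converges since $\prod_{j<k}b_j$ decays like $\exp(-c(q/2)^k)$, doubly exponentially because $q>2^{1/(1-\power)}$ forces $q/2>q^{\power}$, whereas $a_k/\tau_k$ grows only like $2^k\exp(Cq^{\power k})$. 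The optimisation over $\Lambda$ of order $(d_1/T)^{1/(1-\power)}$ then goes through as you sketched and produces $C_{\obs}$ of the stated form, with the surviving $d_0$-dependence collecting into the fixed power of $(2d_0+1)$.
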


While the Lebeau-Robbiano strategy in Theorem~\ref{thm:obs} requires that the constant in the spectral inequality exhibits a
sublinear power growth in the exponent in terms of the spectral parameter $\lambda$, the following statement allows a more
general subexponential growth in $\lambda$, but does not provide a quantitative observability estimate.

\begin{thm}[{\cite[Theorem 5]{DM}}]\label{thm:millerduy}
	Let $A$ be a non-negative selfadjoint operator on $L^2(\RR^d)$, and let $\omega \subset\RR^d$ be measurable. Suppose that the
	spectral inequality
	\[
		\norm{f}^2_{L^2(\RR^d)}
		\leq
		ce^{c\lambda/((\log\log\lambda)^{\alpha}\log\lambda)} \norm{f}^2_{L^2(\omega)}
		,\quad
		f \in \cE_\lambda(A)
		,\,
		\lambda > e
		,
	\]
	holds with some $\alpha>2$ and $c>0$. Then, for all $T>0$, there exists a positive constant $C_T > 0$ such that for all
	$g \in L^2(\RR^d)$ we have
	\[
		\norm{ e^{-TA}g }^2_{L^2(\RR^d)}
		\leq
		C_T\int_0^T \norm{ e^{-tA}g }^2_{L^2(\omega)} \,\dd t
		.
	\]
\end{thm}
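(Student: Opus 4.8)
The plan is to establish directly the equivalent final-state observability estimate for $S(t):=\euler^{-tA}$, by means of a Lebeau--Robbiano type block decomposition in the spirit of Miller, adapted to the borderline growth of the spectral constant. Throughout, write $\Pi_\lambda=\bmone_{(-\infty,\lambda]}(A)$ for the spectral projections of $A$, and recall the dissipation estimate $\norm{S(t)(I-\Pi_\lambda)g}_{L^2(\RR^d)}\leq\euler^{-t\lambda}\norm{g}_{L^2(\RR^d)}$, valid for all $t,\lambda\geq0$. Set $\phi(\lambda)=\lambda/((\log\log\lambda)^\alpha\log\lambda)$ for $\lambda>\euler$, so that the hypothesis reads $\norm{\Pi_\lambda h}_{L^2(\RR^d)}^2\leq c\,\euler^{c\phi(\lambda)}\norm{\Pi_\lambda h}_{L^2(\omega)}^2$. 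The point of the specific form of $\phi$ is that $\phi(\lambda)/\lambda\to0$ as $\lambda\to\infty$, and, more quantitatively, that a certain series built from $\phi$ (see below) converges precisely because $\alpha>2$; this is what allows the iteration to close in \emph{every} positive time $T$, in contrast with the purely sub-linear situation of Theorem~\ref{thm:obs}.

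First I would fix a strictly increasing sequence of energy levels $\lambda_k\uparrow\infty$ and block lengths $\tau_k>0$ with $\sum_{k\geq1}\tau_k=T$, to be chosen later, and set $s_0=0$, $s_k=\tau_1+\dots+\tau_k$, so that $s_k\uparrow T$. On the $k$-th block $[s_{k-1},s_k]$ I would derive a one-step inequality of the form
\[
	\norm{S(s_k)g}_{L^2(\RR^d)}^2
	\leq
	B_k\int_{s_{k-1}}^{s_k}\norm{S(t)g}_{L^2(\omega)}^2\,\dd t+\varepsilon_k\norm{S(s_{k-1})g}_{L^2(\RR^d)}^2
	,
\]
obtained by splitting $S(s_k)g=\Pi_{\lambda_k}S(s_k)g+(I-\Pi_{\lambda_k})S(s_k)g$: the high-frequency part is controlled by the dissipation estimate across a fixed fraction of the block, producing a factor $\varepsilon_k\sim\euler^{c\phi(\lambda_k)}\euler^{-c'\lambda_k\tau_k}$; the low-frequency part is estimated by the spectral inequality applied to $\Pi_{\lambda_k}S(t)g$, followed by the triangle inequality $\norm{\Pi_{\lambda_k}S(t)g}_{L^2(\omega)}\leq\norm{S(t)g}_{L^2(\omega)}+\norm{(I-\Pi_{\lambda_k})S(t)g}_{L^2(\RR^d)}$ and an integration in $t$ over the observation sub-interval of the block, producing $B_k\sim\tau_k^{-1}\euler^{c\phi(\lambda_k)}$ together with another dissipation remainder absorbed into $\varepsilon_k$.

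Next I would iterate this inequality from $k=K$ down to $k=1$ and let $K\to\infty$, using the strong continuity of $S(\cdot)$ to identify $\lim_K\norm{S(s_K)g}_{L^2(\RR^d)}^2=\norm{S(T)g}_{L^2(\RR^d)}^2$ and the monotonicity of $t\mapsto\norm{S(t)g}$ (from $A\geq0$). This gives
\[
	\norm{S(T)g}_{L^2(\RR^d)}^2
	\leq
	\Bigl(\sum_{k\geq1}B_k\prod_{j>k}\varepsilon_j\Bigr)\int_0^T\norm{S(t)g}_{L^2(\omega)}^2\,\dd t+\Bigl(\prod_{k\geq1}\varepsilon_k\Bigr)\norm{g}_{L^2(\RR^d)}^2
	.
\]
The proof then reduces to choosing $(\lambda_k)$ and $(\tau_k)$ so that simultaneously: (a) $\sum_k\tau_k=T$ with $\tau_k\to0$; (b) $\lambda_k\tau_k-c\phi(\lambda_k)\to+\infty$ and $\sum_k(\lambda_k\tau_k-c\phi(\lambda_k))=+\infty$, so that $\prod_k\varepsilon_k=0$ and the initial-data term disappears; and (c) the constant $C_T:=\sum_k B_k\prod_{j>k}\varepsilon_j$ is finite. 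A natural choice takes $\tau_k$ comparable to $\phi(\lambda_k)/\lambda_k$ times a slowly divergent factor; then (a) becomes the convergence of a Cauchy-condensation-type series for $\phi$, which is exactly where $\alpha>2$ enters, while, since $\phi$ is increasing, (b) and the summability in (c) follow from the super-geometric decay of $\prod_{j>k}\varepsilon_j$ relative to the polynomial-in-$\tau_k^{-1}$ growth of $B_k$.

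The main obstacle is the balancing in step (c): because $\phi(\lambda)$ is only logarithmically smaller than $\lambda$, the decay "budget" $\sum_{j>k}\lambda_j\tau_j$ available to beat the spectral cost $\euler^{c\phi(\lambda_k)}$ is tight, and the sequences $(\lambda_k,\tau_k)$ must be tuned carefully; the hypothesis $\alpha>2$ is precisely what makes the associated series convergent and closes the argument. A secondary, routine, point is the one-block estimate, where each block must be split into a "waiting" sub-interval and an "observation" sub-interval so that the dissipation factor does not degenerate at the left endpoint, exactly as in the proof of Theorem~\ref{thm:obs}. Unlike that result, however, the present scheme produces $C_T$ only through a convergence argument and therefore does not yield an explicit value of the observability constant.
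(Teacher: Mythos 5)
The paper does not prove this statement; it is quoted verbatim as a citation of [DM, Theorem~5], and its proof is not reproduced. Your sketch attempts a self-contained proof via an adaptive Lebeau--Robbiano iteration, which is the right family of methods, but it contains a genuine error in the direction of the telescoping that prevents the argument from closing.

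You take the partition points $s_k\uparrow T$ (so the blocks shrink and accumulate at the \emph{right} endpoint $T$), choose $\lambda_k\uparrow\infty$, and iterate the one-block estimate from $k=K$ down to $k=1$. The result of the iteration is
\[
\norm{S(s_K)g}^2
\leq
\sum_{k=1}^K\Bigl(\textstyle\prod_{j=k+1}^K\varepsilon_j\Bigr)B_k\,b_k
+ \Bigl(\textstyle\prod_{j=1}^K\varepsilon_j\Bigr)\norm{g}^2
,\qquad
b_k=\int_{s_{k-1}}^{s_k}\norm{S(t)g}_{L^2(\omega)}^2\,\dd t.
\]
Here the $k=K$ term carries the empty product (coefficient $1$) in front of $B_K\sim\tau_K^{-1}\euler^{c\phi(\lambda_K)}\to\infty$, and the only available bound $\sum_k(\prod_{j>k}^K\varepsilon_j)B_k\,b_k\leq\bigl(\max_{k\leq K}(\prod_{j>k}^K\varepsilon_j)B_k\bigr)\int_0^T\norm{S(t)g}_{L^2(\omega)}^2\,\dd t$ has the factor $\max_k(\cdots)=B_K\to\infty$. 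Consequently the constant $C_T:=\sum_k B_k\prod_{j>k}\varepsilon_j$ you define is not the limit of these partial sums; if read as a bona fide infinite sum it equals $0$ (because $\prod_{j>k}^\infty\varepsilon_j=0$ for every $k$ once $\prod_j\varepsilon_j=0$), and if read as $\lim_K$ of the partial sums it diverges. So conditions (b) and (c) in your sketch are mutually incompatible in the form stated.

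The fix is to reverse the direction of accumulation: let the blocks accumulate at time $0$ (so $t_0=T>t_1>t_2>\cdots\downarrow 0$, with $\lambda_k\uparrow\infty$ on the blocks \emph{near $0$}), and telescope starting from the block adjacent to $T$. Then the iteration yields
\[
\norm{S(T)g}^2
\leq
\sum_{k\geq 1}\Bigl(\textstyle\prod_{j<k}\varepsilon_j\Bigr)B_k\,b_k
+ \Bigl(\textstyle\prod_{j\geq1}\varepsilon_j\Bigr)\norm{g}^2
,
\]
and the coefficient-$1$ term is $B_1$ (a fixed finite number, determined by the first block near $T$ at the \emph{lowest} energy level), while $B_k\to\infty$ is now multiplied by the genuinely decaying prefactor $\prod_{j<k}\varepsilon_j\to0$. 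With this reversal, the final balance is as you describe: $\sum_k\tau_k\leq T$ with $\tau_k$ comparable to $\phi(\lambda_k)/\lambda_k$, and $\sup_k(\prod_{j<k}\varepsilon_j)B_k<\infty$ obtained by arranging $\sum_{j<k}\phi(\lambda_j)$ to dominate $\phi(\lambda_k)+\log(1/\tau_k)$. Note also that your claim that $\alpha>2$ is ``exactly'' what makes the relevant series converge is asserted but not verified; e.g.\ with $\lambda_k$ geometric one finds $\phi(\lambda_k)/\lambda_k\sim 1/\bigl(k(\log k)^\alpha\bigr)$, and the corresponding time budget converges already for $\alpha>1$. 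The precise role of $\alpha>2$ in [DM, Theorem~5] stems from additional regularity conditions on the rate function that your sketch does not track, and would need to be made explicit for the argument to be complete.
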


\subsection{Null-controllability of the Shubin evolution equations}
\label{ssec:proofCorollariesShubin}
Let us first focus on the results regarding the equation \eqref{eq:fractionalShubin}. Here, in order to deal with the fractional
powers of $H_{k,m}$, we use the fact that by the transformation formula for spectral measures, see, e.g.,
\cite[Proposition~4.24]{Schmuedgen-12}, for all $s>0$ and $\lambda \geq 0$ we have
\begin{equation}\label{eq:transfo}
	\cE_{\lambda,s,k,m}
	:=
	\bmone_{(-\infty,\lambda]}(H_{k,m}^s)
	=
	\bmone_{(-\infty,\lambda^{1/s}]}(H_{k,m})
	=
	\cE_{\lambda^{1/s},k,m}
	.
\end{equation}
In essence, this implies that a spectral inequality for $H_{k,m}$ yields a spectral inequality for $H_{k,m}^s$ by just replacing
$\lambda$ by $\lambda^{1/s}$ in the corresponding constant.

We are now in position to prove Corollaries~\ref{cor:fractionalShubin} and~\ref{cor:contlog}.

\begin{proof}[Proof of Corollary~\ref{cor:fractionalShubin}]
	Under the hypotheses on $\sigma$ and $\rho$, we are in the situation of Remark~\ref{rk:mainComparison} with $\delta \leq 1$.
	It therefore immediately follows from \eqref{eq:specweakly} and \eqref{eq:transfo} that for some constants $d_0 > 0$ and
	$d_1 \geq 0$ we have
	\[
		\norm{f}^2_{L^2(\RR^d)}
		\leq
		d_0 e^{d_1\lambda^{\power}} \norm{f}^2_{L^2(\omega)}
		,\quad
		f \in \cE_{\lambda,s,k,m} = \cE_{\lambda^{1/s},k,m}
		,
	\]
	with
	\[
		\power
		=
		\frac{\delta+a}{2sk} + \frac1{2sm}
		<
		1
		.
	\]
	The claim then immediately follows by applying Theorem~\ref{thm:obs}.
\end{proof}

\begin{proof}[Proof of Corollary~\ref{cor:contlog}]
	Given $s > a/2k + 1/2m$, we pick a $\delta\in(0,1)$ such that
	\[
		\frac{a}{2k}+\frac1{2m}
		<
		\frac{\delta+a}{2k}+\frac1{2m}
		<s
		.
	\]
	The hypothesis on $\rho$, namely $\rho(x) = o(\abs{x}^\delta)$ as $\abs{x} \to +\infty$, then implies that there is $L > 0$ such
	that
	\[
		\rho(x)
		\leq
		L\sprod{x}^\delta
		,\quad
		x \in \RR^d
		.
	\]
	We are thus in the situation of Corollary~\ref{cor:fractionalShubin} and the claim is just an instance of that result.
\end{proof}

While the two corollaries above rely on the Lebeau-Robbiano strategy from Theorem~\ref{thm:obs}, Corollary~\ref{cor:harmolog}
has to revert to the more general statement in Theorem~\ref{thm:millerduy}.

\begin{proof}[Proof of Corollary~\ref{cor:harmolog}]
	Under the hypothesis \eqref{eq:harmolog}, it is easy to see that for, say, $\lambda \geq e + 1$ we have
	\[
		L_\lambda
		=
		\sup_{\abs{x}<\sqrt{2\lambda}} \rho(x)
		\leq
		c'\,\frac{L\sqrt{\lambda}}{(\log\log\lambda)^\alpha \log\lambda}
	\]
	with a suitably chosen constant $c' > 0$ depending on $\alpha$, but not on $L$ or $\lambda$. It then follows from
	Theorem~\ref{thm:mainharmo} with a constant function $\sigma$ that
	\[
		\norm{f}^2_{L^2(\RR^d)}
		\leq
		ce^{c\lambda/((\log\log\lambda)^{\alpha}\log\lambda)} \norm{f}^2_{L^2(\omega)}
		,\quad
		f \in \cE_{\lambda,1,1}
		,\,
		\lambda \geq e + 1
		,
	\]
	where $c > 0$ is another constant, depending on $L$, $c'$, and the dimension $d$. Taking into account that
	$\cE_{\lambda,1,1} \subset \cE_{e+1,1,1}$ for $e < \lambda < e+1$, the latter even holds for all $\lambda > e$ after suitably
	adapting the constant $c$. The claim then immediately follows from Theorem~\ref{thm:millerduy}.
\end{proof}

\subsection{Null-controllability of the Baouendi-Grushin evolution equation}

Let us now turn to the null-controllability results for the degenerate parabolic equation \eqref{eq:bagrushin}. We
first observe that after passing to the Fourier side with respect to $\TT^d$-variable, the Baouendi-Grushin operator is
transformed as
\[
	\Delta_x + \abs{x}^{2\gamma}\Delta_y
	\,\rightsquigarrow\,
	\Delta_x - \abs{n}^2 \abs{x}^{2\gamma}
	,
\]
where $n \in \ZZ^d$ is the dual variable of $y \in \TT^d$. This motivates to
introduce the anharmonic oscillator $H_{\gamma;r}$ in $L^2(\RR^d)$ with variably scaled potential\footnote{This
notation is to be distinguished from the anisotropic Shubin operator $H_{k,m}$.},
\[
	H_{\gamma; r}
	:=
	-\Delta_x + r^2\abs{x}^{2\gamma}
	,\quad
	r \geq 0
	.
\]
Consequently, for all $g \in L^2(\RR^d\times\TT^d)$ and $(x,y) \in \RR^d \times \TT^d$ we have
\begin{equation}\label{eq:GrushinTransformed}
	(e^{-t(-\Delta_{\gamma})^s}g)(x,y)
	=
	\sum_{n\in\ZZ^d} e^{iy \cdot n} (e^{-tH_{\gamma;\abs{n}}^{s}} \hat{g}_n)(x)
	,
\end{equation}
where
\[
	\hat{g}_n
	=
	\int_{\TT^d} e^{-iy\cdot n} g(\cdot,y) \,\dd y
	.
\]

We first prove that the thickness condition is necessary to obtain a null-controllability result for the equation
\eqref{eq:bagrushin}.

\begin{proof}[Proof of Proposition \ref{prop:neccond}]
	Suppose that the equation \eqref{eq:bagrushin} is exactly null-controllable from a given measurable set
	$\omega \subset\RR^d \times \TT^d$ in some positive time $T>0$. This is equivalent to the existence of a positive
	constant $C_{\omega,T}>0$ such that for all	$g\in L^2(\RR^d\times\TT^d)$,
	\begin{equation}\label{eq:obsba}
		\norm{ e^{-T(-\Delta_{\gamma})^s}g }^2_{L^2(\RR^d\times\TT^d)}
		\leq
		C_{\omega,T} \int_0^T \norm{ e^{-t(-\Delta_{\gamma})^s}g }^2_{L^2(\omega)} \,\dd t
		.
	\end{equation}

	Now, every function $g \in L^2(\RR^d)$ can be treated as a function in $L^2(\RR^d \times \TT^d)$ that is constant with respect
	to the $\TT^d$-variable. As such, $\hat{g}_n$ in \eqref{eq:GrushinTransformed} then satisfies $\hat{g}_n = 0$ for $n \neq 0$ and
	$\hat{g}_0 = g$, so that from \eqref{eq:GrushinTransformed} we obtain for all $t \geq 0$,
	\[
		e^{-t(-\Delta_{\gamma})^s}g
		=
		e^{-t(-\Delta_x)^s}g
		.
	\]
	Inserting the latter into the observability estimate \eqref{eq:obsba}, we deduce that for all $g\in L^2(\RR^d)$,
	\begin{equation}\label{eq:obsbaII}
		\norm{ e^{-T(-\Delta_x)^s}g }^2_{L^2(\RR^d)}
		\leq
		C_{\omega,T} \int_0^T \norm{ e^{-t(-\Delta_x)^s}g }^2_{L^2(\omega)} \,\dd t
		.
	\end{equation}
	Moreover, by Fubini's theorem, the right-hand side of the latter inequality can for every $g \in L^2(\RR^d)$ be
	rewritten as
	\begin{equation}\label{eq:obsbaSlice}
		\int_0^T \norm{ e^{-t(-\Delta_x)^s}g }^2_{L^2(\omega)} \,\dd t
		=
		\int_{\TT^d} \int_0^T \norm{ e^{-t(-\Delta_x)^s}g }^2_{L^2(\omega_y)} \,\dd t \,\dd y
	\end{equation}
	with
	\[
		\omega_y
		=
		\{ x \in \RR^d \colon (x,y) \in \omega \}
		,\quad
		y \in \TT^d
		.
	\]

	We now proceed similarly as in the proof of \cite[Theorem~2.1\,(i)]{AM}: Given $x_0 \in \RR^d$, consider the particular
	(Gaussian) function $g = g_{x_0} \colon \RR^d \to \RR$ with
	\[
		g(x)
		=
		g_{x_0}(x)
		=
		\exp\biggl( -\frac{\abs{ x-x_0 }^2}{2}  \biggr)
		,\quad
		x \in \RR^d
		,
	\]
	the unitary Fourier transform of which is given by $(\cF g)(\xi) = \hat{g}(\xi) = e^{-ix_0\cdot\xi}g_0(\xi)$. With
	$h_t(\xi) = e^{-t\abs{\xi}^{2s}}g_0(\xi)$, $\xi \in \RR^d$, $t > 0$, we may choose $L > 0$ so large that
	\[
		C_{\omega,T} \int_0^T \norm{ \cF^{-1}h_t }_{L^2(B(0,L)^c)}^2 \,\dd t
		<
		\frac{1}{2}\,\norm{ e^{-T(-\Delta_x)^s}g }_{L^2(\RR^d)}^2
		.
	\]
	In light of
	\[
		e^{-t(-\Delta_x)^s}g
		=
		\cF^{-1}(\xi \mapsto e^{-t\abs{\xi}^{2s}}\hat{g}(\xi))
		=
		(\cF^{-1}h_t)(\cdot - x_0)
		,
	\]
	inserting $g = g_{x_0}$ into \eqref{eq:obsbaII} and \eqref{eq:obsbaSlice} and a change of variables then yield that
	\begin{align*}
		\frac{1}{2}\,\norm{ e^{-T(-\Delta_x)^s}g }_{L^2(\RR^d)}^2
		&\leq
		C_{\omega,T} \int_{\TT^d} \int_0^T \norm{ \cF^{-1}h_t }_{L^2((\omega_y-x_0)\cap B(0,L))}^2 \,\dd t\,\dd y\\
		&\leq
		C_{\omega,T} \int_0^T \norm{ \cF^{-1}h_t }_{L^\infty(\RR^d)}^2 \,\dd t\, \int_{\TT^d} \abs{\omega_y \cap B(x_0,L)} \,\dd y
	\end{align*}
	with
	\[
		\int_0^T \norm{ \cF^{-1}h_t }_{L^\infty(\RR^d)}^2 \,\dd t
		\leq
		\int_0^T \norm{h_t}_{L^1(\RR^d)}^2 \,\dd t
		\leq
		T\norm{g_0}_{L^1(\RR^d)}^2
		<
		\infty
		.
	\]
	Hence, for some $\theta \in (0,1]$ independent of $x_0$, we have
	\[
		\abs{ \omega \cap (B(x_0,L)\times\TT^d)}
		=
		\int_{\TT^d} \abs{ \omega_y\cap B(x_0,L) } \,\dd y
		\geq
		\theta\abs{B(x_0,L)}
		,
	\]
	which proves the claim.
\end{proof}

Parts of the statements of Theorems~\ref{thm:bagrushincont},~\ref{thm:critdiss} and~\ref{thm:lowdissba} can be proved
simultaneously. Here, we first focus on the positive results in Theorem~\ref{thm:bagrushincont}\,$(ii)\Rightarrow(i)$ and
Theorem~\ref{thm:critdiss}\,$(ii)$, which require some preparation.
Consider for $r>0$ the unitary transformation $M_{\gamma,r}$ in $L^2(\RR^d)$ defined by
\begin{equation}\label{eq:defMkr}
	M_{\gamma,r}g
	=
	r^{\frac{d}{2(\gamma+1)}} g(r^{\frac{1}{\gamma+1}}\cdot)
	,\quad
	g \in L^2(\RR^d)
	.
\end{equation}
With $H_{\gamma} = H_{\gamma;1}$, a straightforward computation shows that
\begin{equation}\label{eq:simitorus}
	(M_{\gamma,r})^* (H_{\gamma;r})^s M_{\gamma,r}
	=
	r^{\frac{2s}{\gamma+1}}(H_\gamma)^s
	,\quad
	r,s > 0
	.
\end{equation}
The latter allows, in particular, to obtain observability estimates for the operators $H_{\gamma;r}^s$, $r \geq 1$,
$s > 1/2$, simultaneously:

\begin{prop}\label{prop:interobsIII}
	Let $s>1/2$. Then, there exists a constant $K > 0$, depending only on $\gamma$, $s$, and the dimension $d$, such that for all
	$(\theta,L)$-thick sets $\omega \subset \RR^d$, $r\geq1$, $T > 0$, and $g \in L^2(\RR^d)$, we have
	\[
		\norm{ e^{-TH_{\gamma;r}^s}g }^2_{L^2(\RR^d)}
		\leq
		\frac{C_{\obs}}{T} \int_0^T \norm{ e^{-tH_{\gamma;r}^s}g }^2_{L^2(\omega)} \,\dd t
		,
	\]
	where the positive constant $C_{\obs} > 0$ is given by
	\begin{equation}\label{eq:obscst}
		C_{\obs}
		=
		K\biggl( \frac K{\theta} \biggr)^{K(1+rL^{1+\gamma})} \exp\biggl( \frac{K((1+L)\log(K/\theta))^{\frac{2s}{2s-1}}}{T^{\frac1{2s-1}}} \biggr)
		.
	\end{equation}
\end{prop}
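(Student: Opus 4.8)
The plan is to transfer the spectral inequality of Theorem~\ref{thm:main} for the anisotropic Shubin operator $H_\gamma = H_{\gamma;1}$, which coincides with $H_{\gamma,1} = -\Delta + \abs{x}^{2\gamma}$, to the rescaled operators $H_{\gamma;r}$ via the unitary equivalence \eqref{eq:simitorus}, and then to feed the resulting spectral inequality into the quantitative Lebeau--Robbiano theorem, Theorem~\ref{thm:obs}.

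First I would record a spectral inequality for $H_{\gamma;r}$ that is \emph{uniform} in $r \geq 1$. Set $R := r^{1/(\gamma+1)} \geq 1$. From \eqref{eq:simitorus} with $s = 1$ one has $H_{\gamma;r} = M_{\gamma,r}\,(R^2 H_\gamma)\,M_{\gamma,r}^*$, so that $\cE_\lambda(H_{\gamma;r}) = M_{\gamma,r}\,\cE_{\lambda/R^2}(H_\gamma)$ for every $\lambda \geq 0$. Given $f \in \cE_\lambda(H_{\gamma;r})$, write $f = M_{\gamma,r}g$ with $g \in \cE_{\Lambda,\gamma,1}$ and $\Lambda := \lambda/R^2$; by unitarity $\norm{f}_{L^2(\RR^d)} = \norm{g}_{L^2(\RR^d)}$, and the explicit form \eqref{eq:defMkr} of $M_{\gamma,r}$ together with a change of variables gives $\norm{f}_{L^2(\omega)} = \norm{g}_{L^2(R\omega)}$. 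A direct computation shows that if $\omega$ is $(\theta,L)$-thick in $\RR^d$, then $R\omega$ is $(\theta, RL)$-thick. Hence Theorem~\ref{thm:main}, applied with $k = \gamma$, $m = 1$, and the constant functions $\sigma \equiv \theta$, $\rho \equiv RL$, yields for $g$ the estimate \eqref{eq:specIneq} with $\theta_{\Lambda,\gamma} = \theta$ and $L_{\Lambda,\gamma} = RL$. The decisive algebraic point is the cancellation in the exponent: $(RL)^{1+\gamma} = rL^{1+\gamma}$, while $RL\,\Lambda^{1/2} = L\lambda^{1/2}$, so the coefficient of $\lambda^{1/2}$ no longer depends on $r$; combining this with $\log(1+\Lambda) \leq \log(1+\lambda) \leq 1 + \lambda^{1/2}$ (the last inequality being elementary) I obtain, for a constant $K > 1$ depending only on $\gamma$ and $d$,
\[
	\norm{f}_{L^2(\RR^d)}^2
	\leq
	d_0\, e^{d_1\lambda^{1/2}}\, \norm{f}_{L^2(\omega)}^2
	,\quad
	f \in \cE_\lambda(H_{\gamma;r})
	,\ \lambda \geq 0
	,
\]
with $d_0 = (K/\theta)^{K(1 + rL^{1+\gamma})}$ and $d_1 = K(1+L)\log(K/\theta)$.

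Passing to fractional powers by the transformation formula for spectral measures (exactly as in \eqref{eq:transfo}), $\cE_\lambda(H_{\gamma;r}^s) = \cE_{\lambda^{1/s}}(H_{\gamma;r})$, the same bound holds for $H_{\gamma;r}^s$ with $\lambda^{1/2}$ replaced by $\lambda^{1/(2s)}$. Since $s > 1/2$, the exponent $\power := 1/(2s)$ lies in $(0,1)$, which is precisely the hypothesis of Theorem~\ref{thm:obs}. Applying that theorem with $A = H_{\gamma;r}^s$ gives the asserted observability estimate with
\[
	C_{\obs}
	=
	c_1 d_0 (2d_0 + 1)^{c_2}\, \exp\!\Bigl( c_3\, \bigl( d_1 T^{-1/(2s)} \bigr)^{2s/(2s-1)} \Bigr)
	,
\]
where I used $\tfrac1{1-\power} = \tfrac{2s}{2s-1}$ and $\tfrac{\power}{1-\power} = \tfrac1{2s-1}$. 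It remains to massage this into the form \eqref{eq:obscst}: since $d_0 \geq 1$ one has $c_1 d_0(2d_0+1)^{c_2} \leq c_1 3^{c_2} d_0^{1+c_2}$, which is of the form $K'(K/\theta)^{K'(1+rL^{1+\gamma})}$; and $\bigl(d_1 T^{-1/(2s)}\bigr)^{2s/(2s-1)} = d_1^{2s/(2s-1)} T^{-1/(2s-1)}$ with $d_1^{2s/(2s-1)}$ a fixed multiple of $\bigl((1+L)\log(K/\theta)\bigr)^{2s/(2s-1)}$. Absorbing all these constants---which depend only on $\gamma$, $s$, and $d$---into a single $K$ gives \eqref{eq:obscst}.

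The one genuinely delicate point is the uniformity in $r$ established in the second step. It rests on the observation that, after rescaling, the parameter $r$ enters the exponent of the Shubin spectral inequality only through the term $(RL)^{1+\gamma} = rL^{1+\gamma}$---which produces the mild, $T$-independent factor $(K/\theta)^{K r L^{1+\gamma}}$ in $C_{\obs}$---and \emph{not} through the $\lambda^{1/2}$-term. Since in Theorem~\ref{thm:obs} the quantity $d_0$ affects only the polynomial prefactor of $C_{\obs}$ and never the exponential-in-$1/T$ part (which depends on $d_1$ alone), the $r$-dependence of the final observability constant stays under control, and this is exactly why the estimate can be stated uniformly over all $r \geq 1$. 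Everything else is routine tracking of constants and standard scaling computations.
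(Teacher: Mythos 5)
Your proposal is correct and follows essentially the same route as the paper: rescale by $M_{\gamma,r}$ via \eqref{eq:simitorus}, observe that the rescaled thick set is $(\theta, r^{1/(\gamma+1)}L)$-thick, invoke the Shubin spectral inequality with $k=\gamma$, $m=1$, exploit the cancellation that leaves only the $rL^{1+\gamma}$-term $r$-dependent, and feed the resulting uniform-in-$r$ spectral inequality into Theorem~\ref{thm:obs}. The only cosmetic difference is that you apply the scaling at $s=1$ and then pass to fractional powers via \eqref{eq:transfo}, whereas the paper applies the similarity \eqref{eq:simitorus} directly to $H_{\gamma;r}^s$; the two orderings are equivalent and produce the same constants.
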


\begin{proof}
	It follows from \eqref{eq:specweakly} with $a = 0$ and $\delta = 0$ that for every $(\theta,L)$-thick set
	$\omega \subset \RR^d$, we have
	\begin{equation}\label{eq:specweaklythick}
		\norm{f}^2_{L^2(\RR^d)}
		\leq
		\biggl( \frac C{\theta} \biggr)^{C(1+L^{1+\gamma}+L\sqrt{\lambda}+\log(1+\lambda))} \norm{f}^2_{L^2(\omega)}
		,\quad
		f \in \cE_{\lambda,\gamma,1}
		,\
		\lambda \geq 0
		,
	\end{equation}
	with a constant $C > 0$ depending only on $\gamma$ and the dimension $d$.

	Let us now fix some $r \geq 1$ and a $(\theta,L)$-thick set $\omega \subset \RR^d$. In light of the similarity relation
	\eqref{eq:simitorus}, we clearly have
	\[
		(M_{\gamma,r})^*\cE_\lambda(H_{\gamma;r}^s)
		\subset
		\cE_\lambda(r^{\frac{2s}{\gamma+1}}H_\gamma^s)
		=
		\cE_{\lambda/r^{2s/(\gamma+1)}}(H_\gamma^s)
		=
		\cE_{\lambda^{1/s}/r^{2/(\gamma+1)},\gamma,1}
		.
	\]
	Moreover, one easily checks that the set $\tilde{\omega} := r^{1/(\gamma+1)}\omega$ is $(\theta,r^{1/(\gamma+1)}L)$-thick. We
	therefore deduce from \eqref{eq:specweaklythick} that for all $\lambda \geq 0$ and $f \in \cE_\lambda(H_{\gamma;r}^s)$, we have
	\begin{align*}
		\norm{f}_{L^2(\RR^d)}^2
		=
		\norm{(M_{\gamma,r})^*f}_{L^2(\RR^d)}^2
		&\leq
		\biggl( \frac{C}{\theta} \biggr)^{C(1+rL^{1+\gamma}+L\lambda^{\frac1{2s}} + \log(1+r^{-\frac2{\gamma+1}}\lambda^{\frac1s}))}
			\norm{(M_{\gamma,r})^*f}_{L^2(\tilde{\omega})}^2\\
		&\le
		\biggl( \frac{C}{\theta} \biggr)^{C(1+rL^{1+\gamma}+(1+L)\lambda^{\frac1{2s}})} \norm{f}_{L^2(\omega)}^2
		,
	\end{align*}
	since $r \geq 1$ and, thus,
	$\log(1+r^{-\frac2{\gamma+1}}\lambda^{\frac1s}) \leq \log(1+\lambda^{\frac1s})\leq\lambda^{\frac1{2s}}$. The latter can be
	rewritten as
	\[
		\norm{f}_{L^2(\RR^d)}^2
		\leq
		d_0e^{d_1\lambda^{\frac1{2s}}}\norm{f}_{L^2(\omega)}^2
		,\quad
		f \in \cE_\lambda(H_{\gamma;r}^s) 
		,
	\]
	with
	\[
		d_0
		=
		\biggl( \frac{C}{\theta} \biggr)^{C(1+rL^{1+\gamma})}
		\quad\text{ and }\quad
		d_1
		=
		C(1+L)\log\biggl( \frac{C}{\theta} \biggr)
		.
	\]
	Theorem~\ref{thm:obs} then implies that there exist universal positive constants $c_1,c_2,c_3>0$ such that for all $T>0$ and
	$g \in L^2(\RR^d)$, we have
	\[
		\norm{e^{-TH_{\gamma;r}^s}g}_{L^2(\RR^d)}^2
		\leq
		\frac{C_\obs}T \int_0^T \norm{e^{-tH_{\gamma;r}^s}g}_{L^2(\omega)}^2 \,\dd t
		,
	\]
	with $C_\obs = C_\obs(\omega,T,r)$ given by
	\[
		C_\obs
		=
		c_1d_0(2d_0+1)^{c_2} \exp\biggl( \frac{c_3d_1^{\frac{2s}{2s-1}}}{T^{\frac1{2s-1}}} \biggr)
		.
	\]
	It only remains to observe that there exists another positive constant $c_4 > 0$, depending only on the dimension $d$, such that
	\[
		d_0(2d_0+1)^{c_2}
		\leq
		\biggl( \frac{c_4}{\theta} \biggr)^{c_4(1+rL^{1+\gamma})}
		.
	\]
	This ends the proof of Proposition~\ref{prop:interobsIII} upon a suitable choice of the constant $K$.
\end{proof}

\begin{proof}[Proof of Theorem~\ref{thm:bagrushincont}\,$(ii)\Rightarrow(i)$ and Theorem~\ref{thm:critdiss}\,$(ii)$]
	Let $\omega \subset \RR^d$ be a $(\theta,L)$-thick set. We have to show that whenever $T \geq T^*$, with some time
	$T^* \geq 0$ depending on $\theta$ and $L$ that is to be determined, there exists a constant $C_{\omega,T} > 0$
	such that for all $g \in L^2(\RR^d\times\TT^d)$ we have
	\begin{equation}\label{eq:ponctobsII}
		\norm{ e^{-T(-\Delta_\gamma)^s}g }^2_{L^2(\RR^d\times\TT^d)}
		\leq
		C_{\omega,T} \int_0^T \norm{ e^{-t(-\Delta_\gamma)^s}g }^2_{L^2(\omega\times\TT^d)} \,\dd t
		.
	\end{equation}
	To this end, we first observe from \eqref{eq:GrushinTransformed}, Fubini's theorem, and Parseval's identity that for every
	measurable set $\Omega \subset \RR^d$ and all $t > 0$ and $g \in L^2(\RR^d\times\TT^d)$ we have
	\[
		\norm{ e^{-t(-\Delta_\gamma)^s}g }^2_{L^2(\Omega\times\TT^d)}
		=
		\sum_{n\in\ZZ^d} \norm{ e^{-tH_{\gamma;n}^s} \hat{g}_n }^2_{L^2(\Omega)}
		,
	\]
	where we write $H_{\gamma;n} = H_{\gamma;\abs{n}}$ for every $n \in \ZZ^d$. Inserting the latter into both sides of
	\eqref{eq:ponctobsII}, once with $\Omega = \RR^d$ and $t = T$ and once with
	$\Omega = \omega$, we immediately infer by Fubini's theorem that it suffices to show that
	\begin{equation}\label{eq:obsFourier}
		\norm{ e^{-TH_{\gamma;n}^s}g }^2_{L^2(\RR^d)}
		\leq
		C_{\omega,T} \int_0^T \norm{ e^{-tH_{\gamma;n}^s}g }^2_{L^2(\omega)} \,\dd t
		,\quad
		g \in L^2(\RR^d)
		,\
		n \in \ZZ^d
		,
	\end{equation}
	with a constant $C_{\omega,T} > 0$ not depending on $n$. Here, for $n = 0$, the operator $H_{\gamma;0}^s$ reduces to the
	fractional Laplacian $(-\Delta_x)^s$ on $\RR^d$. Corresponding observability estimates from thick sets are well known in the
	literature, see, e.g., \cite[Theorem~4.10]{NakicTTV-20} or \cite[Theorem~1.12]{AB}. It is therefore sufficient to focus on the
	case $\abs{n} \geq 1$. Here, on the one hand, we deduce from \eqref{eq:simitorus} that
	\[
		\norm{ e^{-tH_{\gamma;n}^s}g }_{L^2(\RR^d)}
		\leq
		e^{-t\lambda_\gamma^s\abs{n}^{\frac{2s}{{1+\gamma}}}} \norm{g}_{L^2(\RR^d)}
		,\quad
		g \in L^2(\RR^d)
		,\
		t \geq 0
		,
	\]
	where $\lambda_{\gamma}>0$ again denotes the smallest eigenvalue of the anharmonic oscillator $H_{\gamma}$.
	This implies, in particular, that
	\[
		\norm{ e^{-TH_{\gamma;n}^s}g }^2_{L^2(\RR^d)}
		\leq
		e^{-T\lambda_\gamma^s\abs{n}^{\frac{2s}{{1+\gamma}}}} \norm{e^{-(T/2)H_{\gamma;n}^s}g}^2_{L^2(\RR^d)}
		,\quad
		g \in L^2(\RR^d)
		.
	\]	
	On the other hand, it follows from Proposition \ref{prop:interobsIII} that for all $n \in \ZZ^d\setminus\{0\}$ and
	$g \in L^2(\RR^d)$, we have
	\[
		\norm{ e^{-(T/2)H_{\gamma;n}^s}g }^2_{L^2(\RR^d)}
		\leq
		\frac{2C_{\obs}}{T} \int_0^{T/2} \norm{ e^{-tH_{\gamma;n}^s}g }^2_{L^2(\omega)} \,\dd t
		,
	\]
	where $C_{\obs} = C_{\obs}(\omega,T/2,\vert n\vert)$ is given by \eqref{eq:obscst} with $T$ replaced by $T/2$. Combining these
	two estimates, we therefore obtain that for all $n \in \ZZ^d \setminus \{0\}$ and $g\in L^2(\RR^d)$,
	\begin{multline*}
		\norm{ e^{-TH_{\gamma;n}^s}g }^2_{L^2(\RR^d)}
		\leq
		\exp\bigl( K\abs{n}L^{1+\gamma}\log(K/\theta)-\abs{n}^{\frac{2s}{1+\gamma}}\lambda_\gamma^sT \bigr) \\
		\times K\biggl( \frac K{\theta} \biggr)^K\exp\biggl( \frac{K((1+L)\log(K/\theta))^{\frac{2s}{2s-1}}}{T^{\frac1{2s-1}}} \biggr)
			\frac2T \int_0^{T/2} \norm{ e^{-tH_{\gamma;n}^s}g }^2_{L^2(\omega)} \,\dd t
		.
	\end{multline*}
	This shows \eqref{eq:obsFourier}, provided that
	\[
		\sup_{\abs{n}\geq1}\exp\bigl( K\abs{n}L^{1+\gamma}\log(K/\theta)-\abs{n}^{\frac{2s}{1+\gamma}}\lambda_\gamma^sT \bigr)
		<
		+\infty.
	\]
	The latter is the case for every $T > 0$ if $s > (1+\gamma)/2$, which proves the implication $(ii)\Rightarrow(i)$ in
	Theorem~\ref{thm:bagrushincont}, and if $s = (1+\gamma)/2$, it requires
	\[
		T
		\geq
		T^*
		:=
		K\lambda_\gamma^{-s}L^{1+\gamma}\log(K/\theta)
		,
	\]
	as claimed in Theorem~\ref{thm:critdiss}\,$(ii)$.
\end{proof}

We now finally turn to the negative null-controllability results for the equation~\eqref{eq:bagrushin}.

\begin{proof}[Proof of Theorem~\ref{thm:critdiss}\,$(i)$ and Theorem~\ref{thm:lowdissba}]
	Let $\omega \subset \RR^d \times \TT^d$ be a measurable set satisfying the geometric condition $\overline\omega\cap\{x=0\} = \emptyset$.
	We assume that for some positive time $T > 0$ there exists a positive constant $C_{\omega,T}>0$ such that for all functions
	$g\in L^2(\RR^d \times \TT^d)$ we have the observability estimate
	\begin{equation}\label{eq:obsBaGrushin}
		\norm{ e^{-T(-\Delta_\gamma)^s}g }^2_{L^2(\RR^d \times \TT^d)}
		\leq
		C_{\omega,T} \int_0^T \norm{ e^{-t(-\Delta_\gamma)^s}g }^2_{L^2(\omega)} \,\dd t
		.
	\end{equation}
	Let $\psi_\gamma \in L^2(\RR^d)$ be a normalized eigenfunction for the anharmonic oscillator $H_\gamma$ corresponding to the
	smallest eigenvalue $\lambda_\gamma>0$. For each $n \in \ZZ^d \setminus \{0\}$, consider the function
	$g_n \in L^2(\RR^d\times \TT^d)$ given by
	\begin{equation}\label{eq:defgn}
		g_n(x,y)
		=
		%\abs{n}^{\frac d{2(\gamma+1)}} e^{in\cdot y} \psi_\gamma(\abs{n}^{\frac1{\gamma+1}}x)
		e^{in\cdot y} (M_{\gamma,n} \psi_\gamma)(x)
		,\quad
		(x,y) \in \RR^d \times \TT^d
		,
	\end{equation}
	where the isometry $M_{\gamma,n} = M_{\gamma,\abs{n}}$ in $L^2(\RR^d)$ is defined as in \eqref{eq:defMkr}. In light of the
	similarity relation \eqref{eq:simitorus}, it is then clear that
	$(-\Delta_\gamma)g_n = \abs{n}^{\frac 2{1+\gamma}}\lambda_\gamma g_n$ as well as
	\[
		\norm{g_n}_{L^2(\RR^d \times \TT^d)}
		=
		1
		\quad\text{ and }\quad
		\norm{g_n}_{L^2(\omega)}
		=
		\norm{\psi_\gamma}_{L^2(\omega_n)}
		,
	\]
	where
	\[
		\omega_n
		=
		\{ (\abs{n}^{1/(1+\gamma)}x , y) \colon (x,y) \in \omega \}
		,
	\]
	and where $\psi_\gamma$ is interpreted as a function in $L^2(\RR^d \times \TT^d)$ that is constant with respect to the
	$\TT^d$-variable.
	The observability estimate \eqref{eq:obsBaGrushin} applied to $g = g_n$ therefore implies that
	\begin{equation}\label{eq:obsBaGrushingn}
		e^{-2\abs{n}^{\frac{2s}{1+\gamma}}\lambda_\gamma^sT}
		\leq
		C_{\omega,T} \int_0^T e^{-2\abs{n}^{\frac{2s}{1+\gamma}}\lambda_\gamma^st} \norm{\psi_\gamma}^2_{L^2(\omega_n)}\,\dd t
		\leq
		TC_{\omega,T} \norm{\psi_\gamma}^2_{L^2(\omega_n)}
		.
	\end{equation}

	Using the classical, more precise Agmon estimate for the anharmonic oscillator $H_\gamma$ mentioned in part (2) of
	Remark~\ref{rk:agmon}, for every $\varepsilon\in(0,1)$ we have
	\[
		\norm{ e^{\frac{\varepsilon\abs{x}^{1+\gamma}}{1+\gamma}}\psi_\gamma }_{L^2(\RR^d)}
		\leq
		c_{\varepsilon,\gamma}
		,
	\]
	where $c_{\varepsilon,\gamma}>0$ is a positive constant depending only on $\varepsilon$, $\gamma$, and the dimension $d$.
	Thus, with $L := \dist(0,\omega)$, for each $n \in \ZZ^d \setminus \{0\}$ we have
	\begin{equation}\label{eq:boundpsik}
		\norm{\psi_\gamma}_{L^2(\omega_n)}
		=
		\norm{ e^{-\frac{\varepsilon\abs{x}^{1+\gamma}}{1+\gamma}} e^{\frac{\varepsilon\abs{x}^{1+\gamma}}{1+\gamma}}
			\psi_\gamma }_{L^2(\omega_n)}
		\leq
		c_{\varepsilon,\gamma}e^{-\frac{\varepsilon\abs{n} L^{1+\gamma}}{1+\gamma}}
		.
	\end{equation}
	Inserting the latter into \eqref{eq:obsBaGrushingn}, we deduce for each $n \in \ZZ^d \setminus \{0\}$ that
	\begin{equation}\label{eq:obsBaGrushingnNec}
		1
		\leq
		TC_{\omega,T}c_{\varepsilon,\gamma}^2
			\exp\biggl( 2\abs{n}^{\frac{2s}{1+\gamma}}\lambda_\gamma^sT - \frac{2\varepsilon\vert n\vert L^{1+\gamma}}{1+\gamma} \biggr)
		.
	\end{equation}
	Now, if $0<s<(1+\gamma)/2$ or if $s=(1+\gamma)/2$ and $0 < T < (\varepsilon/(1+\gamma))(L/\sqrt{\lambda_\gamma})^{\gamma+1}$,
	then
	\[
		\exp\biggl( 2\abs{n}^{\frac{2s}{1+\gamma}}\lambda_\gamma^sT - \frac{2\varepsilon\vert n\vert L^{1+\gamma}}{1+\gamma} \biggr)
		\underset{\vert n\vert\rightarrow+\infty}{\rightarrow}
		0
		,
	\]
	which contradicts the estimate \eqref{eq:obsBaGrushingnNec}. This ends the proof of Theorem~\ref{thm:lowdissba} and,
	after letting $\varepsilon\rightarrow1^-$, also of the one of Theorem~\ref{thm:critdiss}\,$(i)$.
\end{proof}

\begin{rk}\label{rk:schrogrushin}
	It is worth to note that the Schr\"odinger-type equation corresponding to the fractional Baouendi-Grushin operator, that is,
	the equation \eqref{eq:sbagrushin}, is for no $s>0$ and at no time $T>0$ null-controllable from a control support $\omega$
	satisfying the geometric condition $\overline\omega\cap\{x=0\}=\emptyset$. Indeed, assume to the contrary that there exists a
	positive constant $C_{\omega,T} > 0$ such that for all $g\in L^2(\RR^d \times \TT^d)$ we have the observability estimate
	\begin{equation}\label{ex:schrogrushin}
		\norm{g}^2_{L^2(\RR^d \times \TT^d)}
		\leq
		C_{\omega,T} \int_0^T \norm{ e^{it(-\Delta_\gamma)^s}g }^2_{L^2(\omega)}\,\dd t
		.
	\end{equation}
	Inserting again the function $g_n$ defined in \eqref{eq:defgn} and using the estimate \eqref{eq:boundpsik}, we deduce that for
	all $n \in \ZZ^d \setminus \{0\}$ we have
	\[
		1
		\leq
		C_{\omega,T} Tc_{\varepsilon,\gamma}^2 e^{-\frac{2\varepsilon\abs{n} L^{1+\gamma}}{1+\gamma}}
			\underset{\abs{n}\rightarrow+\infty}{\rightarrow}0
		.
	\]
	Hence, the estimate \eqref{ex:schrogrushin} can never hold for all $g \in L^2(\RR^d \times \TT^d)$ simultaneously.
\end{rk}

\appendix

\section{Asymptotic bounds on the smallest eigenvalue of anharmonic oscillators}\label{sec:asymptotics}

In this appendix, we prove a two-sided asymptotics as $k \to +\infty$ for the smallest eigenvalue $\lambda_k$ of the
anharmonic oscillator $H_k = H_{k,1} = -\Delta + \abs{x}^{2k}$, $k \in \NN \setminus \{0\}$, in $L^2(\RR^d)$ equipped with its
maximal domain. This is a key ingredient for Example~\ref{ex:example} in the main part of the manuscript.

\begin{lem}\label{lem:asymptotics}
	For fixed $\varepsilon > 0$, the two-sided bound
	\begin{equation}\label{eq:eigenv}
		\frac{\lambda_D}{(1+\varepsilon)^2} + o(1)
		\leq
		\lambda_{k}
		\leq
		\lambda_D + \int_{B(0,1)} \abs{x}^{2k} \abs{\psi_D(x)}^2 \,\dd x
	\end{equation}
	holds, where the lower bound is to be understood as $k \to +\infty$, and where $\lambda_D>0$ denotes the smallest eigenvalue of
	the Dirichlet Laplacian on the canonical Euclidean unit ball $B(0,1)$ in $\RR^d$ and $\psi_D$ is an associated normalised
	eigenfunction.
\end{lem}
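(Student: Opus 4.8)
The plan is to establish the two inequalities separately. The upper bound is a one-line consequence of the min-max principle, whereas the lower bound is a localization argument exploiting that the potential $\abs{x}^{2k}$ becomes arbitrarily large outside the closed unit ball as $k\to+\infty$ — in essence a strong-confinement (large coupling) limit.

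For the \emph{upper bound} I would insert the normalized Dirichlet eigenfunction $\psi_D$ on $B(0,1)$, extended by zero to all of $\RR^d$, into the variational characterization $\lambda_k = \inf\{\sprod{H_k u, u}_{L^2(\RR^d)} \colon \norm{u}_{L^2(\RR^d)} = 1\}$. Since $\psi_D \in H^1_0(B(0,1))$ is supported in $B(0,1)$, where $\abs{x}^{2k}\leq1$, it lies in the form domain of $H_k$, and
\[
	\lambda_k
	\leq
	\sprod{H_k\psi_D, \psi_D}_{L^2(\RR^d)}
	=
	\int_{B(0,1)} \abs{\nabla\psi_D}^2\,\dd x + \int_{B(0,1)} \abs{x}^{2k}\abs{\psi_D}^2\,\dd x
	=
	\lambda_D + \int_{B(0,1)} \abs{x}^{2k}\abs{\psi_D(x)}^2\,\dd x
	,
\]
where the last equality uses $-\Delta\psi_D = \lambda_D\psi_D$ with homogeneous Dirichlet conditions and $\norm{\psi_D}_{L^2}=1$. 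This is exactly the claimed upper bound, and by dominated convergence the remaining integral tends to $0$ as $k\to+\infty$; in particular $\lambda_k\leq\lambda_D+1$ for all large $k$.

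For the \emph{lower bound}, recall that $H_k$ has purely discrete spectrum, so $\lambda_k$ is attained at a normalized (smooth) ground state $\psi_k$. Testing the eigenvalue equation against $\psi_k$ gives $\int_{\RR^d}\abs{\nabla\psi_k}^2 + \int_{\RR^d}\abs{x}^{2k}\abs{\psi_k}^2 = \sprod{H_k\psi_k,\psi_k}_{L^2(\RR^d)} = \lambda_k \leq \lambda_D+1$ for large $k$, hence both $\int_{\RR^d}\abs{\nabla\psi_k}^2\leq\lambda_k$ and the $L^2$-localization estimate
\[
	\int_{\abs{x}\geq r}\abs{\psi_k}^2\,\dd x
	\leq
	r^{-2k}\int_{\RR^d}\abs{x}^{2k}\abs{\psi_k}^2\,\dd x
	\leq
	r^{-2k}(\lambda_D+1)
	,\qquad r>1
	,
\]
which tends to $0$ as $k\to+\infty$ for each fixed $r>1$. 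Now I would fix $\varepsilon>0$, set $R_1:=1+\varepsilon/2$ and $R_2:=1+\varepsilon$, and pick a smooth cut-off $\eta\colon\RR^d\to[0,1]$ with $\eta\equiv1$ on $B(0,R_1)$, $\Supp\eta\subset B(0,R_2)$, and $C_\varepsilon:=\norm{\eta\,\Delta\eta}_{L^\infty(\RR^d)}<\infty$. From the integration-by-parts identity
\[
	\norm{\nabla(\eta\psi_k)}_{L^2(\RR^d)}^2
	=
	\int_{\RR^d}\eta^2\abs{\nabla\psi_k}^2\,\dd x - \int_{\RR^d}\eta\,\Delta\eta\,\abs{\psi_k}^2\,\dd x
	,
\]
valid since $\psi_k$ is smooth and $\eta$ compactly supported, together with $0\leq\eta\leq1$, $\Supp(\eta\,\Delta\eta)\subset\{R_1\leq\abs{x}\leq R_2\}$, and $\int_{\RR^d}\abs{\nabla\psi_k}^2\leq\lambda_k$, I obtain $\norm{\nabla(\eta\psi_k)}_{L^2(\RR^d)}^2 \leq \lambda_k + C_\varepsilon R_1^{-2k}(\lambda_D+1)$. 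On the other hand, $\eta\psi_k\in H^1_0(B(0,R_2))$, so the min-max principle and the scaling $\lambda_D^{(R)}=\lambda_D/R^2$ of the lowest Dirichlet eigenvalue on $B(0,R)$ give $\norm{\nabla(\eta\psi_k)}_{L^2(\RR^d)}^2 \geq \frac{\lambda_D}{(1+\varepsilon)^2}\norm{\eta\psi_k}_{L^2(\RR^d)}^2$, while $\norm{\eta\psi_k}_{L^2(\RR^d)}^2 \geq 1 - R_1^{-2k}(\lambda_D+1)$ by the localization estimate. Since $R_1>1$, both error terms are $o(1)$ as $k\to+\infty$, and combining these estimates yields $\lambda_k \geq \lambda_D/(1+\varepsilon)^2 + o(1)$, the claimed lower bound.

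The upper bound is entirely routine; the substance lies in the lower bound, which is really the strong-confinement limit of $-\Delta+\abs{x}^{2k}$ towards the Dirichlet Laplacian on $B(0,1)$ and could alternatively be deduced from such a convergence result cited as a black box. Carried out directly as above, the only point needing genuine care is bookkeeping the error terms so that the constant $(1+\varepsilon)^{-2}$ emerges exactly — this is precisely why the cut-off is placed at the intermediate radius $R_1=1+\varepsilon/2$, where the a priori $L^2$-localization of $\psi_k$ has already taken effect, rather than directly at $\partial B(0,1)$.
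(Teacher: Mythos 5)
Your upper bound argument coincides with what the min-max principle gives, exactly as the paper intends; the interesting divergence is in the lower bound, where the two proofs take genuinely different routes that are both correct. The paper's argument compares $H_k$ from below with the cut-off Schr\"odinger operator $-\Delta + (1+\varepsilon)^{2k}\bmone_{B(0,1+\varepsilon)^c}$, uses a scaling to rewrite this (up to a factor $(1+\varepsilon)^{-2}$) as $-\Delta + M\bmone_{B(0,1)^c}$ with $M = (1+\varepsilon)^{2(k+1)}\to+\infty$, and then invokes norm-resolvent convergence of $-\Delta + M\bmone_{B(0,1)^c}$ to the Dirichlet Laplacian on $B(0,1)$ in the large coupling limit as a black box from the literature (Bruneau--Carbou, Demuth--Jeske--Kirsch, Simon). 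Your argument instead works directly with the ground state $\psi_k$: it extracts the a priori bound $\lambda_k\le\lambda_D+1$ from the upper bound, derives from it the Chebyshev-type localization $\int_{\abs{x}\ge r}\abs{\psi_k}^2\le r^{-2k}(\lambda_D+1)$, and then deploys a standard IMS-type commutator identity to transport $\psi_k$ into $H^1_0(B(0,1+\varepsilon))$ with an error that is $\cO(R_1^{-2k})$, after which the Dirichlet min-max on the ball of radius $1+\varepsilon$ closes the estimate. The payoff of your route is that it is entirely self-contained and quantitatively explicit — it reproves the relevant instance of the large coupling limit from scratch and even yields an explicit $o(1)$ rate — whereas the paper's route is shorter but outsources the hard step to the cited literature. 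One minor point worth stating if you write this out formally: you should note explicitly that $\eta\Delta\eta$ is supported in the annulus $\{R_1\le\abs{x}\le R_2\}$ because $\eta$ is constant on $B(0,R_1)$ and vanishes outside $B(0,R_2)$; this is what makes the cross-term error term decay at rate $R_1^{-2k}$ rather than merely being bounded.
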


\begin{proof}
	The upper bound in \eqref{eq:eigenv} follows immediately from the standard min-max principle. Let us therefore focus on the
	lower bound. To this end, fix $\varepsilon>0$ and observe that for all $x \notin B(0,1+\varepsilon)$ we have
	$\abs{x}^{2k} \geq (1+\varepsilon)^{2k}$. This gives
	\[
		-\Delta + (1+\varepsilon)^{2k} \bmone_{B(0,1+\varepsilon)^c}
		\leq
		H_k
		,
	\]
	in the sense of quadratic forms, and it follows from the min-max principle that
	\[
		\min\spec(-\Delta + (1+\varepsilon)^{2k} \bmone_{B(0,1+\varepsilon)^c})
		\leq
		\lambda_k
		.
	\]
	By a standard scaling argument, the operator $-\Delta + (1+\varepsilon)^{2k} \bmone_{B(0,1+\varepsilon)^c}$ is unitarily
	equivalent to $(1+\varepsilon)^{-2}(-\Delta + (1+\varepsilon)^{2(1+k)}\bmone_{B(0,1)^c})$. Moreover, by the theory of the
	large coupling limit \cite{Bruneau, Demuth, Simon}, the spectrum of $-\Delta + M\bmone_{B(0,1)^c}$ converges to the one of
	the Dirichlet Laplacian on $B(0,1)$ as $M$ goes to infinity. More specifically, if follows from \cite{Demuth} that
	$-\Delta + M\bmone_{B(0,1)^c}$ converges to the Dirichlet Laplacian on $B(0,1)$ in norm resolvent sense as
	$M\rightarrow+\infty$, so that indeed
	\[
		\min\spec(-\Delta + M\bmone_{B(0,1)^c})
		=
		\lambda_D + o(1)
		\quad
		\text{as $M \to +\infty$}
		.
	\]
	Applying this result with $M = (1+\varepsilon)^{2(1+k)}$ together with the unitary equivalence mentioned above then proves
	the lower bound in \eqref{eq:eigenv}. This completes the proof.
\end{proof}

\begin{rk}\label{rk:asymptotics}
	Since the eigenfunction $\psi_D$ is radially symmetric, one may introduce a new function $\varphi_D \in C^\infty([0,1])$ with
	$\abs{\psi_D(x)}^2 = \varphi_D(\abs{x})$. Using polar coordinates, we then obtain
	\[
		\int_{B(0,1)} \abs{x}^{2k} \abs{\psi_D(x)}^2 \,\dd x
		=
		\vert\mathbb S^{d-1}\vert\, \int_0^1 r^{2k+d-1} \varphi_D(r) \,\dd r
		.
	\]
	Now, successive integration by parts in the last integral gives for all $N \geq 1$ that, as $k \to +\infty$,
	\[
		\int_0^1 r^{2k+d-1} \varphi_D(r) \,\dd r
		=
		\sum_{j=1}^{N-1}\frac{\varphi_D^{(j-1)}(1)}{(2k+d-1+j)^j} + \cO\biggl( \frac1{k^N} \biggr)
		.
	\]
\end{rk}

From Lemma~\ref{lem:asymptotics} and Remark~\ref{rk:asymptotics} and considering $\varepsilon \to 0^+$ in \eqref{eq:eigenv}, we
immediately obtain the following result.

\begin{cor}\label{cor:asymptotics}
	We have $\lambda_k \to \lambda_D$ as $k \to +\infty$.
\end{cor}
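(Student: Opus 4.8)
The plan is to combine the two-sided bound from Lemma~\ref{lem:asymptotics} with the integral asymptotics from Remark~\ref{rk:asymptotics}, and then let $\varepsilon \to 0^+$. First I would record that, by Remark~\ref{rk:asymptotics} applied with $N = 1$,
\[
	\int_{B(0,1)} \abs{x}^{2k} \abs{\psi_D(x)}^2 \,\dd x
	=
	\abs{\mathbb S^{d-1}}\,\int_0^1 r^{2k+d-1} \varphi_D(r)\,\dd r
	=
	\cO\biggl( \frac1k \biggr)
	\xrightarrow[k\to+\infty]{} 0
	.
\]
Hence the upper bound in \eqref{eq:eigenv} shows $\limsup_{k\to+\infty} \lambda_k \leq \lambda_D$, with no dependence on $\varepsilon$.

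Next I would address the lower bound. For every fixed $\varepsilon > 0$, the left-hand inequality in \eqref{eq:eigenv} gives
\[
	\liminf_{k\to+\infty} \lambda_k
	\geq
	\frac{\lambda_D}{(1+\varepsilon)^2}
	.
\]
Since this holds for all $\varepsilon > 0$, letting $\varepsilon \to 0^+$ yields $\liminf_{k\to+\infty} \lambda_k \geq \lambda_D$. Combining the two one-sided statements gives $\lim_{k\to+\infty}\lambda_k = \lambda_D$, which is the claim.

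The argument is essentially a two-line limit manipulation once Lemma~\ref{lem:asymptotics} and Remark~\ref{rk:asymptotics} are in hand, so there is no genuine obstacle; the only point requiring a little care is making sure the order of limits is handled correctly — one must take $k\to+\infty$ first (for each fixed $\varepsilon$) and only afterwards send $\varepsilon\to 0^+$, since the lower bound in \eqref{eq:eigenv} is an asymptotic statement in $k$ with $\varepsilon$ held fixed. The vanishing of the correction term in the upper bound follows simply from $0 \leq r^{2k} \leq 1$ on $[0,1]$ together with $\int_{B(0,1)}\abs{\psi_D}^2 = 1$ and dominated convergence (or directly from the $\cO(1/k)$ bound in Remark~\ref{rk:asymptotics}), so no delicate estimate is needed there either.
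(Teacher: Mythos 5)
Your argument is correct and follows exactly the paper's route: apply Lemma~\ref{lem:asymptotics} together with Remark~\ref{rk:asymptotics} to kill the upper correction term, then take $k\to+\infty$ for fixed $\varepsilon$ in the lower bound and send $\varepsilon\to 0^+$ afterward. You also correctly identify the one subtlety, namely the order in which the two limits must be taken.
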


\end{document}